\providecommand{\keywords}[1]{\textbf{\textit{Keywords---}} #1}
\providecommand{\classification}[1]{\textbf{\textit{2010 Subject classification index
---}} #1}
\author{Mart\'{\i}n de Diego, David; Sato Mart\'in de Almagro, Rodrigo T.}
\newtheorem{theorem}{Theorem}[section]
\newtheorem{proposition}{Proposition}[section]
\newtheorem{corollary}{Corollary}[theorem]
\theoremstyle{remark}
\newtheorem*{remark}{Remark}
\theoremstyle{definition}
\newtheorem{definition}{Definition}[section]
\begin{document}
\title{High-order geometric methods for nonholonomic mechanical systems}
\author{ D. Mart\'{\i}n de Diego, R. T. Sato Mart{\'\i}n de Almagro
	\\[2mm]
	{\small  Instituto de Ciencias Matem\'aticas (CSIC-UAM-UC3M-UCM)} \\
	{\small C/Nicol\'as Cabrera 13-15, 28049 Madrid, Spain}}

\date{\today}

		\maketitle
			
\keywords{Variational integrators, nonholonomic systems, high-order methods, partitioned Runge-Kutta methods, Euler-Lagrange equations, nonholonomic integrators}	

\classification{ 
 49Mxx, 65L80, 70G45, 70Hxx.}

		\begin{abstract}
In the last two decades, significant effort has been put in understanding and designing so-called structure-preserving numerical methods for the simulation of mechanical systems. Geometric integrators attempt to preserve the geometry associated to the original system as much as possible, such as the structure of the configuration space, the energy behaviour, preservation of constants of the motion and of constraints or other structures associated to the continuous system (symplecticity, Poisson structure...). In this article, we develop high-order geometric (or pseudo-variational) integrators for nonholonomic systems, i.e., mechanical systems subjected to constraint functions which are, roughly speaking, functions on velocities that are not derivable from position constraints. These systems realize rolling or certain kinds of sliding contact and are important for describing different classes of vehicles.
		\end{abstract}
		
		\tableofcontents
\title{}
\section{Introduction}\label{sec:Intro}
During the last twenty-five years, there has been a continuous interest in numerical methods for differential equations preserving some of its relevant geometric features. These methods constitute a new discipline called  Geometric integration \cite{sanz-serna} and now it is possible to find hundred of papers devoted to this topic and in multiple applications in applied sciences (see as main monogrographs \cite{feng,serna,Leim-reich,hairer,blanes} among others).

These methods arise in contrast to classical integrators which are mainly focused on solving numerically generic ordinary differential equations on an euclidean space. But in many cases of interest in mathematics, physics and engineering, we know some important qualitative and geometric features of the continuous system to be simulated, as for instance,  preservation of the energy, linear or angular momenta, symplectic form, volume form, Poisson structures, etc. and as we will see in this paper, also the preservation of constraints on configuration or phase spaces.

A very relevant subclass of structure-preserving integrators are variational integrators (see \cite{marsden-west,leoshin}). This type of geometric integrators is based on a discretization of the corresponding variational principles and admits very interesting interpretation and can be extended to forced systems, higher order Lagrangian theories, field theories... The idea behind these is to replace the action integral associated to a continuous Lagrangian by a discrete action sum and extremize it over all the sequences of points with fixed-end point conditions. Assuming a regularity condition, the derived numerical method is guaranteed to be symplectic or Poisson preserving and also, provided symmetry invariance, it is possible to obtain preservation of the associated constants of the motion. These methods can be extended naturally to systems subjected to holonomic constraints \cite{marsden-west,MR1862825}.

Most important is that for these methods we have the variational error analysis framework to rigorously analyse their order (see \cite{marsden-west,patrick-cuell} and also \cite{rodrigo} for forced systems). This tells us that for a regular Lagrangian system $L: TQ\rightarrow {\mathbb R}$ we can relate the error of the derived numerical method with that obtained by measuring the difference between its corresponding discrete Lagrangian and the exact discrete Lagrangian, derived from the continuous solutions of the Euler-Lagrange equations.

To construct high-order variational integrators it is natural to consider the class of Runge-Kutta (RK) methods that arises when we discretize a Hamilton-Pontryagin action \cite{YoshimuraMarsden} using a RK scheme and take variations. The resulting methods are known as variational partitioned Runge-Kutta (VPRK) methods (see \cite{hairer, marsden-west,campos01}).

An important class of problems not covered by the above developments are related with problems subjected to nonholonomic constraints which are, roughly speaking, constraints depending on the velocities describing the mechanical system. Great interest has been shown recently in this class of systems due to its applications to  control theory and robotics \cite{bloch}. Traditionally, the equations of motion for nonholonomic mechanics are derived from the Lagrange-d’Alembert principle, which is non-variational since it restricts the set of infinitesimal variations (or constrained forces) and these variations are not derivable from constrained curves in the space of admissible curves satisfying the boundary conditions. Therefore, we do not have at our disposal any variational error analysis result to construct high-order geometric integrators for nonholonomic systems. Thus it has been necessary to prove new results about local and global convergence for the methods that we propose in this paper. 

The paper is structured as follows. In section \ref{sec:Holonomic systems} we give a short introduction to Lagrangian and Hamiltonian mechanics to keep the paper mostly self-contained and to fix the notation throughout. Section \ref{sec:noholonomo} is devoted to the case of nonholonomic systems describing both the Lagrangian and Hamiltonian formalisms for constrained systems.
In section \ref{sec:var_integrators} we introduce discrete mechanics, variational integrators and, in particular, partioned Runge-Kutta methods derived using a discrete Hamilton-Pontryagin action. There it is shown how these methods can be extended to holonomic constrained mechanical systems.
The most important results of the paper are given in section \ref{sec:discrete-nonholonomic} where a new class of  nonholonomic pseudo-variational partitioned Runge-Kutta integrators are proposed of arbitrary order. In section 
\ref{sec:liegroups} we extend the previous results to the case of Lie group integrators using a retraction map to derive our methods.
Finally, in section \ref{sec:examples} we study several nonholonomic examples using our methods. In particular, a nonholonomic particle in an harmonic potential, a pendulum-driven continuous variable transmission (CVT), a fully chaotic nonholonomic system, the nonholonomic vertical disc (unicycle) with an elastic spring and the  nonholonomic ball on a turntable. Several appendices are added at the end of the paper to introduce the order conditions in numerical methods and to summarize the numerical methods derived on the paper to facilitate their  use in other examples.

\section{Lagrangian and Hamiltonian systems}\label{sec:Holonomic systems}
\subsection{Lagrangian description}
Let $Q$ be a smooth manifold of $\dim Q = n$ and let $TQ$ denote its tangent bundle with canonical projection $\tau_Q: TQ\rightarrow Q$. Let $L: TQ \rightarrow \mathbb{R}$ be a $C^2$ function. We say that the pair $(Q,L)$ form a \emph{Lagrangian system} where $Q$ is denoted as the configuration manifold, $TQ$ is the velocity phase space and $L$ the Lagrangian function of the system which determines its dynamics \cite{foundation}.
Consider local coordinates $(q^i)$ on $Q$, $i = 1, ..., n$. The corresponding fibered coordinates on $TQ$ will be denoted by $(q^i, v^i)$ and consequently $\tau_Q(q^i, v^i)=(q^i)$. 

Consider curves $c: [a, b] \subseteq \mathbb{R} \rightarrow Q$ of class $C^2$ connecting two fixed points $q_0, q_1 \in Q$ and the collection of all these curves
\begin{equation*}
C^2 (q_0, q_1, [a,b])= \left\lbrace c: [a, b] \rightarrow Q \,|\, c \in C^2([a, b]), c(a) = q_0, c(b) = q_1\right\rbrace.
\end{equation*}
whose tangent space is
\begin{align*}
T_c C^2(q_0, q_1, [a,b]) &=\\
&\hspace{-1cm}\left\lbrace X: [a, b]\rightarrow TQ \,|\, X \in C^1([a, b]), \tau_Q \circ X = c \text{ and } X(a) = X(b) = 0\right\rbrace.
\end{align*}
Given a Lagrangian $L$ we can define the \emph{action functional}: 
\begin{equation*}
\begin{array}{rrcl}
\mathcal{J}: & C^2(q_0, q_1, [a,b]) & \longrightarrow &                               \mathbb{R}\\
             &                    c &     \longmapsto & \int_a^b L(c(t), \dot{c}(t)) \mathrm{d}t
\end{array}
\end{equation*}
where $\dot{c} = \frac{\mathrm{d}c}{\mathrm{d}t}$. Together with the following variational principle, this gives us the equations of motion for our system.

\begin{definition}{\textbf{(Hamilton's principle)}}\label{def:hamiltons_principle}{ A curve $c \in C^2(q_0, q_1, [a, b])$ is a solution of the Lagrangian system defined by $L: TQ\rightarrow \mathbb{R}$ if and only if $c$ is a critical point of the functional $\mathcal{J}$, i.e. $\mathrm{d}\mathcal{J}(c)(X) = 0$, for all $X \in T_c C^2(q_0, q_1, [a,b])$.}
\end{definition} 

Using standard techniques from variational calculus, it is easy to show that the curves $c(t) = (q^i(t))$ solutions of the Lagrangian system defined by $L$ are the solutions of the following system of second order implicit  differential equations:
\begin{equation*}
D_{\mathrm{EL}} L({c})=\frac{\mathrm{d}}{\mathrm{d}t} \left( \frac{\partial L}{\partial {\dot{q}}^i}\right) - \frac{\partial L}{\partial {q}^i} = 0
\end{equation*}
which are the well-known Euler-Lagrange equations.

Let us denote by $S = \mathrm{d}q^i \otimes \frac{\partial}{\partial v^i}$ and $\Delta= v^i \, \frac{\partial}{\partial v^i}$ the vertical endomorphism and the Liouville vector field on $TQ$ (see \cite{ManuelMecanica} for intrinsic definitions).

The Poincar\'e-Cartan 2-form is defined by $\omega_L = - \mathrm{d}\theta_L,  \theta_L = S^*(\mathrm{d}L)$
and the energy function $E_L = \Delta(L)-L$, which in local coordinates read as
\begin{eqnarray*}
\theta_L & = & \frac{\partial L}{\partial v^i} \mathrm{d} q^i\\
\omega_L & = & \mathrm{d}q^i \wedge \mathrm{d}\left( \frac{\partial L}{\partial v^i}\right)\\
E_L & = & v^i \frac{\partial L}{\partial v^i} - L (q, v) .
\end{eqnarray*}
Here $S^*$ denotes the adjoint operator of $S$. We may construct the transformation $\mathbb{F}L: TQ \rightarrow T^*Q$, called \emph{Legendre transform} or \emph{fibre derivative}, where $\langle \mathbb{F}L(v_q), w_q \rangle={\frac{\mathrm{d}}{\mathrm{d}t}\big |_{ t=0}L(v_q + t w_q)}$.
In coordinates, $\mathbb{F}L(q^i, v^i)=(q^i, \frac{\partial L}{\partial v^i}(q,v))$. 
We say that the Lagrangian is regular if $\mathbb{F}L$ is a local diffeomorphism, which in local coordinates is equivalent to the regularity of the Hessian matrix whose entries are:
\begin{equation*}
\left(g_{L}\right)_{ij} = \frac{\partial^2 L}{\partial v^i \partial v^j}.
\end{equation*}
In this case, $\omega_L$ is a symplectic form on $TQ$. Observe that in this case, the Euler-Lagrange equations are written as a system of explicit second order differential equations We will assume for the rest of the paper that $L$ is regular.

The Euler-Lagrange equations are geometrically encoded as the equations for the flow of the vector field $X_{E_L}$:
\begin{equation*}
\imath_{X_{E_L}}\omega_L = \mathrm{d}E_L.
\end{equation*}

\subsection{Hamiltonian description}
The cotangent bundle $T^*Q$ of a differentiable manifold $Q$ is equipped with a canonical exact symplectic structure $\omega_Q = -\mathrm{d}\theta_Q$, where $\theta_Q$ is the canonical 1-form on $T^*Q$ defined point-wise by
\begin{equation*}
(\theta_Q)_{\alpha_q}(X_{\alpha_q}) = \langle \alpha_q, T_{\alpha_q}\pi_Q(X_{\alpha_q})\rangle
\end{equation*}
where $X_{\alpha_q}\in T_{\alpha_q}T^*Q$, $\alpha_q\in T_q^*Q$.

In canonical bundle coordinates these become
\begin{eqnarray*}
\theta_Q &= p_i\, \mathrm{d}q^i\; ,\ 
\omega_Q &= \mathrm{d}q^i\wedge \mathrm{d}p_i\; .
\end{eqnarray*}

Given a Hamiltonian function $H: T^*Q \rightarrow \mathbb{R}$ we define the Hamiltonian vector field
\begin{equation*}
\imath_{X_H}\omega_Q = \mathrm{d}H
\end{equation*}
which can be written locally as
\begin{equation*}
X_H = \frac{\partial H}{\partial p_i}\frac{\partial}{\partial q^i} - \frac{\partial H}{\partial q^i}\frac{\partial}{\partial p_i}.
\end{equation*}
Its integral curves are determined by the Hamilton's equations: 
\begin{eqnarray*}
\frac{\mathrm{d}q^i}{\mathrm{d}t}&=&\frac{\partial H}{\partial p_i}\; ,\\
\frac{\mathrm{d}p_i}{\mathrm{d}t}&=&-\frac{\partial H}{\partial q^i}\; .
\end{eqnarray*}
using canonical coordinates $(q^i, p_i)\in T^*Q$.

Given a function $H: T^*Q\rightarrow {\mathbb R}$, a Hamiltonian function, we may construct the transformation $\mathbb{F}H: T^*Q\rightarrow TQ$ where $\langle \beta_q, \mathbb{F}H(\alpha_q)\rangle={\frac{\mathrm{d}}{\mathrm{d}t}\big |_{ t=0}H(\alpha_q+t\beta_q)}$.
In coordinates, $\mathbb{F}H(q^i, p_i)=(q^i, \frac{\partial H}{\partial p_i}(q,p))$. 
We say that the Hamiltonian is regular if $\mathbb{F}H$ is a local diffeomorphism, which in local coordinates is equivalent to the regularity of the Hessian matrix whose entries are:
\begin{equation}
g_H^{i j} = \frac{\partial^2 H}{\partial p_i\partial p_j}. \label{eq:metric_on_cotangent_bundle}
\end{equation}

If we have a regular Lagrangian problem $L$, we can define an associated Hamiltonian problem as $H = E_L \circ \left(\mathbb{F}L\right)^{-1}$. Additionally one gets that $\theta_Q = \left(\mathbb{F}L^{-1}\right)^*\theta_L$ and $\omega_Q = \left(\mathbb{F}L^{-1}\right)^*\omega_L$ which proves that in this case $\omega_L$ provides $TQ$ with a symplectic structure. In this particular case, $\mathbb{F}H=\mathbb{F}L^{-1}$ and also that $\mathbb{F}L_* X_{E_L}=X_H$. . 

\section{Nonholonomic mechanical systems}\label{sec:noholonomo}
\subsection{Lagrangian description}
Nonholonomic mechanics is the study of mechanical systems whose evolution is constrained depending on both its current position and velocity, more rigorously, the nonholonomic constraints are specified by a submanifold  $N \subset TQ$. This is in contrast with the holonomic case where $N \subset Q$. In most applications $N$ is a vector subbundle completely described by a non-integrable distribution $\mathcal{D}$ and so one identifies $N = \mathcal{D}$, although that need not be the case for us. Thus let us state the following:

\begin{definition}
A \textbf{nonholonomic mechanical system} is a triple $(L,Q,N)$ where $L: TQ \to \mathbb{R}$ is a $C^k$ regular Lagrangian, with $k \geq 2$, and $N \subset TQ$ such that $N \neq TX$ for some $X \subset Q$.
\end{definition}

In what follows we will assume for simplicity that $\tau_Q(N) = Q$, where $\tau_Q: TQ \to Q$. See more details in \cite{bloch,cortes02,neimarkfufaev}. 

One is commonly given a function $\Phi: TQ \to \mathbb{R}^m$, with $m = \mathrm{codim}_{TQ}(N)$ such that its null-set is $N$ (i.e. $\Phi^{-1}(0) = N$). Clearly if $i_N: N \to TQ$ then we could define a restricted Lagrangian, $L^N = L \circ i_N$, and \emph{a priori} this latter description would be the most natural for the system but that is not necessarily true. In fact, for a nonholonomic  Lagrangian system the equations of motion still rely on the complete Lagrangian $L$. Only a subset, albeit an important one, of these systems admit a complete description in terms of $L^N$, the constrained variational system or vakonomic systems (see \cite{cortes02}).

An important space that will appear later is the Chetaev bundle, $S^*(TM^0)$, where $\left(TM\right)^0 \subseteq T^*Q$ denotes the annihilator of $TM$. This is locally spanned by $S^*(\mathrm{d}\Phi)$ which can be understood a set of separated semibasic 1-forms
\begin{equation*}
\frac{\partial \Phi^a}{\partial v^i} \mathrm{d}q^i, \quad \forall a = 1,...,m.
\end{equation*}
It will always be assumed that $\Phi$ is such that $\mathrm{rank}\, S^*(\mathrm{d}\Phi) = m$ (\emph{admissibility condition}). Additionally we will assume that $L$ and $\Phi$ satisfy that the matrix whose elements are:
\begin{equation*}
C^{a b} = g_{L}^{i j} \frac{\partial \Phi^a}{\partial v^i} \frac{\partial \Phi^b}{\partial v^j},
\end{equation*}
where $g_{L}^{i j}$ are the elements of the inverse of $\left(g_{L}\right)_{i j}$, is regular (\emph{compatibility condition}) (see \cite{dLdD96b}).

Once we are given a nonholonomic mechanical system, the next thing to do is to obtain its corresponding equations of motion. Mathematically it is easier to formulate the equations of motion for the $(L, \Phi)$ system in an augmented setting through the method of Lagrange multipliers. An intrinsic derivation of the equations in $N$ (restricted setting) exist which sidesteps the use of these multipliers, but we will use the former. It is well known that the equation of motion of a nonholonomic system are not described using constrained variational calculus for $L^N$ (see \cite{lewismurray95}). 

The main departure point of nonholonomic mechanics from its holonomic counterpart is that its evolution equations are non-variational, i.e. they cannot be derived from a purely variational principle like Hamilton's principle. As we will see in a moment we will need to use \emph{Chetaev's principle} instead, which can be understood as an instance of the Lagrange-D'Alembert principle for a particular kind of constraints (linear or affine) \cite{dLdD96b,cortes02}.

Consider the submanifold $\widetilde{C}^2 (q_0, q_1, [a,b])$ of $C^2 (q_0, q_1, [a,b])$ consisting of those curves compatible with the constraint:
\begin{equation*}
\widetilde{C}^2 (q_0, q_1, [a,b])= \left\lbrace \tilde{c} \in C^2 (q_0, q_1, [a,b]) \,|\, \left(\tilde{c}(t),\dot{\tilde{c}}(t)\right) \in N\right\rbrace.
\end{equation*}

For each $\tilde{c}$ we can consider the vector subspace of $T_{\tilde{c}} C^2(q_0, q_1, [a,b])$,
\begin{align*}
\mathcal{V}^{\Phi}_{\tilde{c}} (q_0, q_1, [a,b]) &=&
\hspace{-2cm}\left\lbrace X \in T_{\tilde{c}} C^2(q_0, q_1, [a,b]) \,|\, \forall \bar{X}(t) \in T_{(\tilde{c}(t),\dot{\tilde{c}}(t))} TQ \right.\\
&&\left.\text{ s.t. } T\tau_Q\left(\bar{X}\right) = X, S^*\left(\mathrm{d}\Phi\right)(X) = 0\right\rbrace.
\end{align*}

Given a vectir field along a solution $\tilde{c}$,  $X = X^i \frac{\partial}{\partial q^i}$, then $X \in \mathcal{V}^{\Phi}_{\tilde{c}} (q_0, q_1, [a,b])$ if and only if:
\begin{equation}
\tag{variational constraint}
X^i \frac{\partial \Phi^a}{\partial \dot{q}^i}\Big|_{\tilde{c}} = 0, \quad \forall a = 1,...,m.
\label{eq:kinematic_condition}
\end{equation}

\begin{definition}{\textbf{(Chetaev's principle)}}\label{def:chetaev_principle}{ A curve $\tilde{c} \in  \tilde{C}^2(q_0, q_1, [a, b])$ is a solution of the nonholonomic Lagrangian system defined by $L: TQ\rightarrow \mathbb{R}$ and $\Phi: TQ \to \mathbb{R}^m$ if and only if $\tilde{c}$ satisfies $\mathrm{d}\mathcal{J}(\tilde{c})(X) = 0$, for all $X \in \mathcal{V}^{\Phi}_{\tilde{c}} (q_0, q_1, [a,b])$}
\end{definition}

This means that $\tilde{c}$ is a solution of the nonholonomic problem if and only if:
\begin{equation*}
D_{\mathrm{EL}} L(\tilde{c}) (X) = \left(\frac{\mathrm{d}}{\mathrm{d}t}\left(\frac{\partial L}{\partial \dot{q}^i}\right) - \frac{\partial L}{\partial q^i}\right) X^i = 0,
\end{equation*}
for all $X^i$ satisfying the \ref{eq:kinematic_condition}, i.e. $D_{\mathrm{EL}} L(\tilde{c})$ is in the Chetaev bundle. This implies that the equations of motion can be written as:
\begin{equation}
\frac{\mathrm{d}}{\mathrm{d}t}\left(\frac{\partial L}{\partial \dot{q}^i}\right) - \frac{\partial L}{\partial q^i} = \lambda_a \frac{\partial \Phi^a}{\partial \dot{q}^i}, \quad \forall i = 1,...,n;
\end{equation}
where $\lambda_1, ..., \lambda_m$ are Lagrange multipliers. These multipliers are determined by ensuring that the curve belongs to $\widetilde{C}^2(q_0, q_1, [a, b])$, i.e. imposing the constraint equations:
\begin{equation}
\Phi^a(q,\dot{q}) = 0, \quad \forall a = 1, ..., m.
\end{equation}
To ensure that the resulting system of equations for the multipliers has a unique solution it is necessary to invoke the \emph{compatibility condition} ($(C^{ab})$ is a regular matrix or see next section).

\subsection{Geometric description}
Geometrically the equations of motion derived from Chetaev's principle can be reformulated as:
\begin{equation*}
\imath_X \omega_L - \mathrm{d}E_L \in F^0_{\text{nh}},
\end{equation*}
where $X \in TN$ and $F^0_{\text{nh}} = S^*((TN)^0)$ is the Chetaev bundle. Define also the space $F^{\perp}_{\text{nh}}$ by $\flat_L\left(F^{\perp}_{\text{nh}}\right) = F^0_{\text{nh}}$ where $\flat_L: T(TQ) \to T^*(TQ)$ and $\sharp_L: T^*(TQ) \to T(TQ)$ are the musical endomorphisms defined by $\omega_L$, that is, $\flat_L (X)=i_X \omega_L$ and $\sharp_L=(\flat_L)^{-1}$. The regularity assumption about  a nonholonomic system stated above can be recast as:
\begin{itemize}
\item $\mathrm{codim}\, N = \mathrm{rank}\, F^0_{\text{nh}}$ (\emph{admissibility condition}),
\item $TN \cap F^{\perp}_{\text{nh}} = 0$ (\emph{compatibility condition}) .
\end{itemize}

A solution $X$ will be of the form $X = \xi_L + \lambda_b \zeta^b$, where $\xi_L$ is the Hamiltonian vector field of the unconstrained problem and $\zeta^a = \sharp_L (S^*(\mathrm{d}\Phi^a))$. To determine the Lagrange multipliers we need to use the tangency condition $X(\Phi) = 0$, where we get:
\begin{equation}
X(\Phi^a) = \xi_L(\Phi^a) + \lambda_b \zeta^b(\Phi^a) = 0
\label{eq:Lagrange_multipliers_determination}
\end{equation}
where $\zeta^b(\Phi^a) = C^{b a}$. In  \cite{dLdD96b,dLMdD97} it is shown that the regularity of this matrix implies the geometric compatibility condition just stated.

\subsection{Hamiltonian description}
As in the Lagrangian setting, the nonholonomic problem from the Hamiltonian point of view would be described by a function $H: T^*Q \to \mathbb{R}$ and a constraint submanifold $M \subset T^*Q$ with $\mathrm{codim}_{T^*Q}M = m$ \cite{marle98}. This manifold can be locally described by a function $\Psi: T^*Q \to \mathbb{R}^m$.

For deriving the Hamiltonian description we   need to rely on the Lagrangian description of the Chetaev bundle and pull it back using the Legendre transform $\mathbb{F}H$ induced by our Hamiltonian. This forces us to admit that $H$ must be regular.  The matrix in eq.\eqref{eq:metric_on_cotangent_bundle} allows us to define a definite quadratic form   $g_H: T^*Q \times T^*Q \to \mathbb{R}$ with corresponding  isomorphisms $\sharp_H: T^*Q \to TQ$ and $\flat_{H}: TQ \to T^*Q$. Thus the Hamiltonian Chetaev bundle $\left(F^0_{\mathrm{nh}}\right)^*$ can be locally described by $\flat_{H}\left(T\tau_{Q} \left(\sharp_{\omega} \mathrm{d}\Psi\right)\right)$, where $\sharp_{\omega}: T^*(T^*Q) \to T(T^*Q)$ is the musical isomorphism induced by the canonical structure and $T\tau_{Q}: T(T^*Q) \to TQ$.

As in the Lagrangian case, the resulting equations of motion are:
\begin{equation*}
\imath_X \omega_Q - \mathrm{d}H \in \left(F^0_{\mathrm{nh}}\right)^*
\end{equation*}
Using the notation $\left(g_{H}\right)_{i j}$ for the elements of the inverse of $g_{H}^{i j}=\frac{\partial^2 H}{\partial p_i
	\partial p_j}$, we can write these equations in local coordinates as:
\begin{eqnarray*}
\dot{q}^i &=& \frac{\partial H}{\partial p_i},\\
\dot{p}_i &=& -\frac{\partial H}{\partial q^i} + \lambda_a \left(g_{H}\right)_{i j} \frac{\partial \Psi^a}{\partial p_j}, \quad \forall i = 1, ..., n
\end{eqnarray*}
together with the constraint equations
\begin{equation*}
\Psi^a(q,p) = 0, \quad \forall a = 1, ...,m.
\end{equation*}

\section{Discrete mechanics and variational integrators}\label{sec:var_integrators}
Let $\bar{Q} = \mathbb{R} \times Q$ denote the time extended configuration space. The discrete counterpart of Lagrangian mechanics can be seen to arise from the \emph{complete solution} $\bar{S}: \bar{Q} \times \bar{Q} \to \mathbb{R}$ of the Hamilton-Jacobi PDE \cite{marsden-west,hairer} :
\begin{equation*}
H\left(t_1, q_1, \frac{\partial \bar{S}}{\partial q_1} (t_0, q_0, t_1, q_1)\right) = -\frac{\partial \bar{S}}{\partial t_1} (t_0, q_0, t_1, q_1)
\end{equation*}

A solution $\bar{S}(t_0, q_0, t_1, q_1)$ can be understood as a \emph{type 1 generating function} when seen in the framework of canonical transformations \cite{metodosmatem}. Its partial derivatives with respect to $q_1$ and $q_0$ are  $\frac{\partial \bar{S}}{\partial q_1} (t_0, q_0, t_1, q_1) \in T_{q_1}^*Q$ and $\frac{\partial \bar{S}}{\partial q_0} (t_0, q_0, t_1, q_1) \in T_{q_0}^*Q$, respectively.

As Jacobi himself showed, this function corresponds to the action integral:
\begin{equation}
\bar{S}(t_0, q_0, t_1, q_1) = \int_{t_0}^{t_1} L(q(\tau),\dot{q}(\tau),\tau) \mathrm{d}\tau = \mathcal{J}(c)
\label{eq:Jacobis_solution}
\end{equation}
where $c \in C^2(q_0,q_1,[t_0,t_1])$ satisfies Hamilton's principle, i.e. it is a solution of the Euler-Lagrange equations, satisfying the boundary conditions $q(0) = q_0$ and $q(t) = q_1$.

Assuming that our Lagrangian is time-independent, we can then work with $S(q_0, q_1, t_1 - t_0) := \bar{S}(0, q_0, t_1 - t_0, q_1)$, which is perhaps a better-known presentation of a complete solution of the Hamilton-Jacobi equation. If $L$ is regular then for sufficiently small $h \in \mathbb{R}$ we know that $q(h \tau)$, with $\tau \in [0, 1]$, is unique and so $S(q_0, q_1, h)$ is well-defined. This  function is what we call the \emph{exact discrete Lagrangian} of the system and in the jargon of variational integrators it is usually denoted as $S \equiv L_d^{e}$.

Given an extremal trajectory $c(t) \in C^2(q_a,q_b,[t_a,t_b])$ for the Hamilton's principle with $t_b > t_a$ and a set of values $t_i \in (t_a,t_b)$, for $i = 1, ..., N-1$ we may divide the trajectory into smaller segments, $c_{i}(t) \in C^2(q_{i},q_{i+1},[t_i,t_{i+1}])$, where $q_{i} = c(t_i)$. With $t_a \equiv t_0$, $t_b \equiv t_N$, $q_a \equiv q_{0}$ and $q_b \equiv q_{N}$, we say that $c_d: \left\lbrace t_i \right\rbrace_{i = 0}^{N} \to Q, t_i \mapsto q_i$ is a discrete exact  trajectory. From the definition, we have 
\begin{equation*}
L_d^{e}(q_a, q_b, t_b - t_a) = \sum_{k = 0}^{N-1} L_d^{e}(q_k, q_{k+1}, t_{k+1} - t_k)\; .
\end{equation*}

Let us restrict to constant step-size discretizations so that $t_{i+1} - t_i = h, \forall i = 0, ..., N-1$, and consider the space of discrete trajectories connecting $q_a, q_b \in Q$.
%\begin{equation*}
%C_d \left(q_a, q_b, \left\lbrace t_i \right\rbrace_{i = 0}^{N}\right) = \left\lbrace c_d : \left\lbrace t_i \right\rbrace_{i = 0}^{N} \to Q\,|\, c_d(t_a) = q_a, c_d(t_b) = q_b\right\rbrace,
%\end{equation*}
\begin{equation*}
C_d \left(q_a, q_b\right) = \left\lbrace c_d : \left\lbrace t_i \right\rbrace_{i = 0}^{N} \to Q\,\vert\, c_d(t_0) = q_a, c_d(t_N) = q_b\right\rbrace.
\end{equation*}
whose tangent space is:
\begin{equation*}
T_{c_d} C_d \left(q_a, q_b\right) = \left\lbrace X_d : \left\lbrace t_i \right\rbrace_{i = 0}^{N} \to TQ\,\vert\, \pi_Q X_d = c_d, X_d(t_0) = X_d(t_N) = 0\right\rbrace.
\end{equation*}

If we define the functional
%\begin{equation*}
%\begin{array}{rrcl}
%\mathcal{J}_d: & C_d \left(q_a, q_b, \left\lbrace t_i \right\rbrace_{i = 0}^{N}\right) & \longrightarrow &                               \mathbb{R}\\
%               &                    c_d &     \longmapsto & \sum_{k = 0}^{N-1} L_d^{e}(q_k, q_{k+1}, t_{k+1} - t_k)
%\end{array}
%\end{equation*}
\begin{equation*}
\begin{array}{rrcl}
\mathcal{J}^e_d: & C_d \left(q_a, q_b\right) & \longrightarrow &                               \mathbb{R}\\
               &                    c_d &     \longmapsto & \sum_{k = 0}^{N-1} L_d^{e}(q_k, q_{k+1}),
\end{array}
\end{equation*}
where $L_d^{e}(q_k, q_{k+1}) := L_d^{e}(q_k, q_{k+1}, h)$, we can check that if $c_d$ discretizes $c(t)$, the solution of the Euler-Lagrange equations, then $\mathrm{d} \mathcal{J}^e_d (c_d)(X_d) = 0$, which leads us to stablish the following
%\begin{definition}{\textbf{(Discrete Hamilton's principle)}}\label{def:discrete_hamiltons_principle}{ $c_d \in C_d \left(q_a, q_b, \left\lbrace t_i \right\rbrace_{i = 0}^{N}\right)$ is a solution of the discrete Lagrangian system defined by a discrete Lagrangian $L_d: Q \times Q \times \mathbb{R} \to \mathbb{R}$ if and only if $c_d$ is a critical point of the functional $\mathcal{J}_d$, i.e. $\mathrm{d}\mathcal{J}_d(c_d) = 0$}
%\end{definition}
\begin{definition}{\textbf{(Discrete Hamilton's principle)}}\label{def:discrete_hamiltons_principle}{ $c_d \in C_d \left(q_a, q_b\right)$ is a solution of the discrete Lagrangian system defined by a discrete Lagrangian $L_d: Q \times Q \to \mathbb{R}$ if and only if $c_d$ is a critical point of the functional $\mathcal{J}_d$, i.e. $\mathrm{d}\mathcal{J}_d(c_d)(X_d) = 0$, for all $X_d \in T_{c_d} C_d(q_a, q_b)$}.
\end{definition}
Here $L_d: Q\times Q\rightarrow {\mathbb R}$ is an arbitrary $C^2$-function not necessarily related with a continuous lagrangian although typically we will choose an suitable apporoximation of the exact discrete Lagrangian $L_d^e$ (see \cite{marsden-west}).

It is easy to show that discrete curves $c_d$ satisfying the discrete Hamilton's principle are the solutions of the following system of implicit difference equations
%\begin{equation}
%D_2 L_d(q_{i-1}, q_{i}, t_{i} - t_{i-1}) + D_1 L_d(q_{i}, q_{i+1}, t_{i+1} - t_i) = 0, \quad i = 1, ..., N - 1;
%\label{eq:discrete_euler_lagrange}
%\end{equation}
\begin{equation}
D_2 L_d(q_{i-1}, q_{i}) + D_1 L_d(q_{i}, q_{i+1}) = 0, \quad i = 1, ..., N - 1;
\label{eq:discrete_euler_lagrange}
\end{equation}
where $D_i$ denotes partial derivation with respect to the $i$-th variable. These are the \emph{discrete Euler-Lagrange} equations \cite{marsden-west}. Each of these equations gives us a map, called \emph{discrete Lagrangian map}
\begin{equation*}
\begin{array}{rccc}
F_{L_d}:& Q \times Q & \to & Q \times Q\\
        &(q_{i-1},q_{i}) & \mapsto & (q_{i},q_{i+1})
\end{array}
\end{equation*}
which evolves our discrete system. 

Following  \cite{marsden-west}, we have that if $c_d$ is a critical point of $\mathcal{J}_d$, then $\mathrm{d}\mathcal{J}_d(c_d) = \Theta_{L_d}^{+}(q_{N-1},q_{N}) - \Theta_{L_d}^{-}(q_{0},q_{1}) = \left[\left(F_{L_d}^{N-1}\right)^* \Theta_{L_d}^{+} - \Theta_{L_d}^{-}\right](q_0,q_1)$, where
\begin{eqnarray*}
\Theta_{L_d}^{-}(q_0,q_1) &=& - D_1 L_d(q_0,q_1) \mathrm{d}q_0\\
\Theta_{L_d}^{+}(q_0,q_1) &=& D_2 L_d(q_0,q_1) \mathrm{d}q_1
\end{eqnarray*}
are called the discrete Poincar\'e-Cartan forms. Applying a second exterior derivative to these we find that:
\begin{equation*}
\Omega_{L_d} = \mathrm{d}\Theta_{L_d}^{-} = \mathrm{d}\Theta_{L_d}^{+} = \frac{\partial^2 L_d}{\partial q_0^i \partial q_0^j} \mathrm{d}q_0^i \wedge \mathrm{d}q_1^j.
\end{equation*}
As in the continuous case, this form is nondegenerate if and only if the matrix $(\partial^2 L_d/\partial q_0\partial q_1)$ is nonsingular (a symplectic 2-form), and in this case we say that the discrete Lagrangian is \emph{regular}.

From $\mathrm{d}^2 \mathcal{J}_d(c_d) = 0$ we get that $\left(F_{L_d}^{N-1}\right)^* \Omega_{L_d} = \Omega_{L_d}$, which is true for any number of steps. Therefore the  discrete Lagrangian map preserve  
2-form $\Omega_{L_d}$ and therefore its is  a \emph{symplectic trasnformation} in the regular case. 

Now, if we consider the maps $\mathbb{F}L_d^{\pm}: Q \times Q \to T^*Q$ defined by:
\begin{eqnarray*}
\mathbb{F}L_d^{-}(q_0,q_1) &=& (q_0, p_0 = - D_1 L_d(q_0,q_1) )\\
\mathbb{F}L_d^{+}(q_0,q_1) &=& (q_1, p_1 = D_2 L_d(q_0,q_1) )
\end{eqnarray*}
called \emph{discrete fibre derivatives} or \emph{discrete Legendre transforms}, eqs.\eqref{eq:discrete_euler_lagrange} can be reinterpreted as a matching condition for the momenta:
\begin{equation*}
- D_2 L_d(q_{i-1}, q_{i}) = p_i^{-} = p_i^{+} = D_1 L_d(q_{i}, q_{i+1}), \quad i = 1, ..., N - 1.
\end{equation*}

This in turn leads us to the following commutative diagram:
\begin{center}
\begin{tikzcd}[column sep=tiny, row sep=huge]
		Q \times Q :& & (q_0,q_1) \arrow[dl, mapsto, "\mathbb{F}L_d^{-}"'] \arrow[dr, mapsto, "\mathbb{F}L_d^{+}"] \arrow[rr, mapsto, "F_{L_d}"] & & (q_1,q_2) \arrow[dl, mapsto, "\mathbb{F}L_d^{-}"'] \arrow[dr, mapsto, "\mathbb{F}L_d^{+}"] &\\
		T^*Q :& (q_0,p_0) \arrow[rr, mapsto, "\widetilde{F}_{L_d}"] & & (q_1,p_1) \arrow[rr, mapsto, "\widetilde{F}_{L_d}"] & & (q_2,p_2)
\end{tikzcd}
\end{center}
Here $\widetilde{F}_{L_d}$ receives the name of \emph{discrete Hamiltonian map} and is also  symplectic for the canonical symplectic form $\omega_Q$, i.e., $\widetilde{F}_{L_d}^*\omega_Q=\omega_Q$. It is also worthwhile noting that together with the continuous fibre derivative we can relate $(q_0, v_0)$, the initial values of our IVP, with $(q_0, q_1)$ via $(q_0, p_0)$:
\begin{center}
\begin{tikzcd}[column sep=tiny, row sep=huge]
		Q \times Q : && (q_0,q_1)\\
		T^*Q : &(q_0,p_0) \arrow[ur, mapsto, "\left(\mathbb{F}L_d^{-}\right)^{-1}"'] &\\
		T Q : &(q_0,v_0) \arrow[u, mapsto, "\mathbb{F}L"'] &
\end{tikzcd}
\end{center}
which will be useful for the initialization of a numerical integration procedure.

\subsection{Variational integrators}
Although warranted to exist, it is generally impossible to obtain an analytic expression for the exact discrete Lagrangian. What we can do is try to approximate the action integral by a finite sum and hope that the extremum of this approximation converges to the extremum of the continuous problem as the quality of our quadrature augments and as the length of our intervals, $h$, diminishes. This leads to the following definition:
\begin{definition}
Let $L:TQ\rightarrow {\mathbb R}$ a regular lagrangian, $L_d^e$ the corresponding exact discrete Lagrangian and $L_{d}: Q\times Q\rightarrow \mathbb{R}$ be a discrete Lagrangian. We say that $L_{d}$ is a discretization of order $r$ if there exist an open subset $U_{1}\subset TQ$ with compact closure and constants $C_1 > 0$, $h_1 > 0$ so that:
\begin{equation*}
\left\Vert L_{d}(q(0),q(h)) - L_{d}^{e}(q(0),q(h))\right\Vert \leq C_1 h^{r+1}
\end{equation*}
for all solutions $q(t)$ of the second-order Euler-Lagrange equations with initial conditions $(q_0,\dot{q}_0)\in U_1$ and for all $h \leq h_1$.
\end{definition}

Using these approximations $L_{d} \approx L_{d}^{e}$ we can apply the discrete Hamilton's principle which leads us again to eq.\eqref{eq:discrete_euler_lagrange}. The resulting discrete flows, be it the discrete Lagrangian map $F_{L_d}$ our the discrete Hamiltonian map $\widetilde{F}_{L_d}$, become our \emph{variational integrator}. By construction, variational integrators automatically preserve symplecticity and momentum and exhibit quasi-energy conservation for exponentially long times (\cite{marsden-west} and references therein).

Following \cite{marsden-west,patrick-cuell}, we have the next result about the order of a variational integrator.

\begin{theorem}\label{thm:variational_error}
If $\widetilde{F}_{L_d}$ is the Hamiltonian map of an order $r$ discretization $L_d: Q\times Q \to \mathbb{R}$ of the exact discrete Lagrangian $L_d^{e}: Q \times Q \to \mathbb{R}$, then
\begin{equation*}
\widetilde{F}_{L_d} = \widetilde{F}_{L_{d}^{e}} + \mathcal{O}(h^{r+1}).
\end{equation*}
In other words, $\widetilde{F}_{L_d}$ gives an integrator of order $r$ for $\widetilde{F}_{L_{d}^{e}} = F_{H}^{h}$.
\end{theorem}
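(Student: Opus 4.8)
The plan is to factor the discrete Hamiltonian map through the discrete Legendre transforms and then propagate the order-$r$ agreement of the Lagrangians along this factorization. From the commutative diagram above one has $\widetilde{F}_{L_d} = \mathbb{F}L_d^{+}\circ\left(\mathbb{F}L_d^{-}\right)^{-1}$ and, identically, $\widetilde{F}_{L_d^e} = \mathbb{F}(L_d^e)^{+}\circ\left(\mathbb{F}(L_d^e)^{-}\right)^{-1} = F_H^h$, the last equality being the defining property of the exact discrete Lagrangian coming from Jacobi's solution \eqref{eq:Jacobis_solution}. Thus it suffices to feed the same initial datum $(q_0,p_0)$ into both constructions and to show that the resulting endpoints satisfy $q_1 - q_1^e = \mathcal{O}(h^{r+1})$ and $p_1 - p_1^e = \mathcal{O}(h^{r+1})$.

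I would carry this out in blown-up variables. Writing $q_1 = q_0 + h u$ and $L_d = h\,\ell(q_0,u,h)$, $L_d^e = h\,\ell^e(q_0,u,h)$, the discrete fibre derivatives become $p_0 = \partial_u \ell - h\,\partial_{q_0}\ell$, which determines $u$ implicitly from $(q_0,p_0)$, and $p_1 = \partial_u\ell$; subtracting the two yields the momentum update $p_1 = p_0 + h\,\partial_{q_0}\ell$, and likewise for the exact objects. Regularity of $L$, hence of $L_d$ for small $h$, makes $u \mapsto \partial_u\ell - h\,\partial_{q_0}\ell$ a local diffeomorphism (at $h=0$ it is the Legendre transform $\mathbb{F}L$), so the implicit function theorem applies uniformly over the compact closure of $U_1$. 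Granting $\ell - \ell^e = \mathcal{O}(h^{r})$ together with its first derivatives in $(q_0,u)$, the implicit function theorem gives $u - u^e = \mathcal{O}(h^{r})$, whence $q_1 - q_1^e = h(u-u^e) = \mathcal{O}(h^{r+1})$, and the momentum update gives $p_1 - p_1^e = h\big[\partial_{q_0}\ell(q_0,u,h) - \partial_{q_0}\ell^e(q_0,u^e,h)\big] = h\cdot\mathcal{O}(h^{r}) = \mathcal{O}(h^{r+1})$.

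Two points deserve care, and the second is the genuine obstacle. First, the derivative estimate: the hypothesis only supplies the pointwise bound $\|L_d - L_d^e\|\le C_1 h^{r+1}$, i.e. $|\ell-\ell^e|\le C_1 h^{r}$, and differentiation does not respect order estimates in general. I would upgrade it using smoothness in $h$: since $\ell$ and $\ell^e$ extend smoothly to $h=0$ with common value $L(q_0,u)$, the pointwise bound valid for all $h\le h_1$ forces the first $r$ Taylor coefficients in $h$ of $\ell-\ell^e$ to vanish, so $\ell - \ell^e = h^{r}R(q_0,u,h)$ with $R$ smooth, after which differentiating in the base variables costs nothing on compacts. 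Second, and more delicate, is that the momentum error must not shed a power of $h$: differentiating $\ell-\ell^e$ directly produces only an $\mathcal{O}(h^{r})$ bound for $p_1-p_1^e$, and it is a nontrivial cancellation — made manifest by the momentum-update form $p_1 = p_0 + h\,\partial_{q_0}\ell$, which isolates an explicit factor of $h$ — that restores the extra order. This cancellation is precisely the variational structure at work, and it is the subtle step where the original argument of \cite{marsden-west} required the repair carried out in \cite{patrick-cuell}; securing it uniformly over $U_1$, rather than the routine derivative bookkeeping, is the heart of the proof. Combining the two endpoint estimates with $\widetilde{F}_{L_d^e} = F_H^h$ then gives $\widetilde{F}_{L_d} = \widetilde{F}_{L_d^e} + \mathcal{O}(h^{r+1})$.
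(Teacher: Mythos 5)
The paper offers no proof of this theorem --- it is stated as quoted from \cite{marsden-west} and \cite{patrick-cuell} --- and your argument is a correct reconstruction of the standard proof in those references: the factorization through the discrete Legendre transforms, the blow-up $q_1=q_0+hu$, and the uniform implicit-function-theorem step are exactly the Patrick--Cuell treatment. You have also correctly isolated the two known delicate points, namely upgrading the $C^0$ bound $\vert\ell-\ell^e\vert\leq C_1h^{r}$ to a $C^1$ bound via vanishing of Taylor coefficients in $h$ (the repair of the original Marsden--West argument, which differentiated the error estimate without justification), and recovering the extra power of $h$ in the momentum error through the update form $p_1=p_0+h\,\partial_{q_0}\ell$ rather than the raw formula $p_1=\partial_u\ell$.
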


Note that given a discrete Lagrangian $L_{d}$ its order can be calculated by expanding the expressions for $L_d(q(0),q(h))$ in a Taylor series in $h$ and comparing this to the same expansions for the exact discrete Lagrangian. If the series agree up to $r$ terms, then the discrete Lagrangian is of order $r$ \cite{marsden-west}.

This result is key as it essentially tells us that the order of the quadrature rule we use to approximate our discrete Lagrangian will be the order of the resulting variational integrator.

\subsection{Discrete Hamilton-Pontryagin action and partitioned Runge-Kutta methods}
In order to construct variational integrators of arbitrary order it will be important for us to consider the Hamilton-Pontryagin action and its corresponding discretization \cite{MR2496560,KobiMarsSukh}. First, consider the extended space of curves
\begin{align*}
&C^1 ((q_a,v_a,p_a),(q_b,v_b,p_b), [a,b])\\
&= \left\lbrace (q,v,p): [a, b] \rightarrow \mathbb{T}Q \,|\, q \in C^2([a, b]), v, p \in C^1([a, b]),\right.\\
&\left.\hspace{0.62cm}(q,v,p)(a) = (q_a,v_a,p_a), (q,v,p)(b) = (q_b,v_b,p_b) \right\rbrace
\end{align*}
where $\mathbb{T}Q := TQ \oplus T^*Q=\{ (v_q, \alpha_q), v_q\in T_qQ, \alpha_q\in T^*_q, \forall q\in Q\}$ denotes the Whitney sum of the tangent and cotangent bundles of $Q$.

Let us denote by $\pi_{\oplus}: \mathbb{T}Q \to Q$ the corresponding bundle projection, $\pi_{\oplus}(v_q, \alpha_q)=q$. Then if $c \in C^1 ((q_a,v_a,p_a),(q_b,v_b,p_b), [a,b])$ and $\pi_{\oplus} c = q \in C^2 (q_0, q_1, [a,b])$ we say that $c$ is over the curve $q$.

Let $\mathcal{J_{HP}}: C^1 ((q_a,v_a,p_a), (q_b,v_b,p_b), [a,b]) \to \mathbb{R}$ denote the functional defined by:
\begin{equation}
\mathcal{J_{HP}}(q,v,p) = \int_0^{h} \left[ L(q(t),v(t)) + \left\langle p(t), \dot{q}(t) - v(t)\right\rangle \right] \mathrm{d}t\; .
\label{eq:continuous_Hamilton-Pontriagyn_action}
\end{equation}
This is the so-called \emph{Hamilton-Pontryagin action} functional and it can be interpreted as a constrained action functional where the $p$'s act as Lagrange multipliers and we are imposing the kinematic constraint $\dot{q}(t) = v(t)$. It can be checked that for the extended curve $(q,v,p)$ to be a compatible critical point of the action, $\mathrm{d}\mathcal{J_{HP}}(q,v,p) = 0$, then the following equations must be satisfied:
\begin{align*}
\frac{\mathrm{d}p(t)}{\mathrm{d}t} &= D_1 L(q(t),v(t)),\\
p(t) &= D_2 L(q(t),v(t)),\\
\frac{\mathrm{d}q(t)}{\mathrm{d}t} &= v(t), \quad \forall t \in [0, h].
\end{align*}
These imply that $q(t)$ must be a critical point of $\mathcal{J}(q)$, and that the Lagrange multipliers must coincide with the canonical momenta, hence the notation.

Incidentally, if variations are taken without imposing fixed end-point conditions we obtain the boundary term:
\begin{equation*}
\left.\left\langle p(t), \delta q(t)\right\rangle\right\vert_0^h = \left\langle p(h), \delta q(h)\right\rangle - \left\langle p(0), \delta q(0)\right\rangle = \theta_{L}(\delta q)(h) - \theta_{L}(\delta q)(0)
\end{equation*}

This variational principle does not add much in the continuous realm for regular Lagrangian systems, but higher order variational integrators can be generated by discretising such an action. To proceed we must choose how to discretise the constraint $\dot{q} = v$. Assuming we are working on a vector space $Q$ (later we will explore the case of a Lie group), the usual way to do so is by using an $s$-stage Runge-Kutta (RK) scheme for the integration of such an ODE:
\[
Q_0^i = q_0 + h \sum_{j=1}^{s} a_{i j} V_0^i\; , \qquad
q_1 = q_0 + h \sum_{j=1}^{s} b_{j} V_0^i
\]

From here on it will be assumed that the RK coefficients satisfy the \emph{consistency condition}
\[
\sum_{j = 1}^s a_{i j} = c_i,
\]
which often appears as part of the definition of RK method, and at least the order 1 condition:
\begin{equation}
\sum_{j = 1}^s b_{j} = 1.\tag{order 1}\label{eq:order_1_cond}
\end{equation}

Following \cite{MR2496560}, given an $s$-stage RK scheme, let us consider the space of \emph{$s$-stage variationally partitioned RK ( s-stage VPRK) sequences}:
\begin{align*}
&C_d^s (q_a, q_b)\\
&= \left\lbrace \left(q,\tilde{p},\left\lbrace Q^i, V^i, \tilde{P}^i\right\rbrace_{i = 1}^s\right): \left\lbrace t_k\right\rbrace_{k=0}^N \rightarrow T^*Q \times \left(\mathbb{T}Q\right)^s \,|\, q(a) = q_a, q(b) = q_b\right\rbrace.
\end{align*}

Then we can define the following discrete Hamilton-Pontryagin functional, $\left(\mathcal{J_{HP}}\right)_d: C_d^s (q_a, q_b) \to \mathbb{R}$, by:
\begin{align}
\left(\mathcal{J_{HP}}\right)_d = &\sum_{k = 0}^{N-1} \sum_{i = 1}^s h b_i \left[ L\left(Q_k^i, V_k^i\right) + \left\langle P_k^i, \frac{Q_k^i - q_k}{h} - \sum_{j=1}^s a_{i j} V_k^j\right\rangle\right.\label{eq:discrete_Hamilton-Pontriagyn_action}\\
&+ \left. \left\langle \tilde{p}_{k+1}, \frac{q_{k+1} - q_k}{h} - \sum_{j=1}^s b_j V_k^j\right\rangle \right]\nonumber
\end{align}

\begin{theorem}\label{thm:VPRK_methods}
Let $L : TQ \to \mathbb{R}$ be a $C^l$ function with $l \geq 2$ and an $s$-stage VPRK sequence $c_d \in C_d^s (q_0, q_N)$. Then $c_d$ is a critical point of the discrete Hamilton-Pontryagin functional, $\left(\mathcal{J_{HP}}\right)_d$, if and only if for all $k = 0, ..., N-1$ and $i = 1, ..., s$ it satisfies
\begin{equation}
\label{eq:symplectic_partitioned_integrator}
\begin{array}{ll}
\, q_{k+1}  = q_k + h \sum_{j = 1}^{s} b_{j} V_k^j,\qquad  & p_{k+1}  = p_k + h \sum_{i = 1}^{s} \hat{b}_{j} W_k^j,\\
Q_k^i  = q_k + h \sum_{j = 1}^{s} a_{i j} V_k^j, & P_k^i  = p_k + h \sum_{j = 1}^{s} \hat{a}_{i j} W_k^j,\\
W_k^i  = D_1 L(Q_k^i, V_k^i), & P_k^i  = D_2 L(Q_k^i, V_k^i),
\end{array}
\end{equation}
where the RK coefficients satisfy $b_i \hat{a}_{i j} + \hat{b}_j a_{j i} = b_i \hat{b}_j$ and $\hat{b}_{i} = b_i$.
\end{theorem}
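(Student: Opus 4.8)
The plan is to compute the differential $\mathrm{d}\left(\mathcal{J_{HP}}\right)_d$ of the discrete action \eqref{eq:discrete_Hamilton-Pontriagyn_action} and impose stationarity with respect to each independent field of an $s$-stage VPRK sequence separately: the interior nodes $q_k$ (with $1 \leq k \leq N-1$, the endpoints being fixed), the nodal momenta $\tilde{p}_{k+1}$, and the internal stages $Q_k^i$, $V_k^i$ and $P_k^i$. Since the system \eqref{eq:symplectic_partitioned_integrator} is precisely the collection of equations ``$\partial/\partial(\text{field}) = 0$'', both directions of the \emph{if and only if} will follow from this single computation. First I would rewrite the summand using the \eqref{eq:order_1_cond} condition $\sum_i b_i = 1$, so that the $\tilde{p}_{k+1}$--pairing collapses to a single term of weight $h$, and I would record the key bookkeeping fact that each nodal $q_k$ is shared between the $k$-th and $(k-1)$-th summands, whereas each $\tilde{p}_{k+1}$, $Q_k^i$, $V_k^i$, $P_k^i$ lives in a single summand.

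Next I would dispatch the variations that reproduce constraints. Varying the multiplier $P_k^i$ and the momentum $\tilde{p}_{k+1}$ returns the two imposed relations directly, namely the internal position equation $Q_k^i = q_k + h\sum_j a_{ij} V_k^j$ and the update $q_{k+1} = q_k + h\sum_j b_j V_k^j$. Varying the internal position $Q_k^i$ gives $h b_i\, D_1 L(Q_k^i,V_k^i) + b_i P_k^i = 0$; introducing the force $W_k^i := D_1 L(Q_k^i,V_k^i)$, this expresses the stage-constraint multiplier as $P_k^i = -h\, W_k^i$ (here and below I use $b_i \neq 0$). The point I would stress is that the object conjugate to the stage-position constraint is $O(h)$ and proportional to the force, and is therefore \emph{not} the stage momentum $D_2 L$ that appears in the conclusion.

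The remaining two variations produce the momentum equations. Varying the shared node $q_k$ is the discrete summation-by-parts step: the contributions of the $k$-th and $(k-1)$-th summands combine into $\tilde{p}_k - \tilde{p}_{k+1} - \sum_i b_i P_k^i = 0$, which after substituting $P_k^i = -h W_k^i$ becomes the momentum update $p_{k+1} = p_k + h\sum_i b_i W_k^i$; matching this with the desired form forces $\hat{b}_i = b_i$. Finally, varying $V_k^i$ yields $b_i\, D_2 L(Q_k^i,V_k^i) = \sum_l b_l a_{li} P_k^l + b_i\, \tilde{p}_{k+1}$; inserting the eliminated multipliers and the update for $\tilde{p}_{k+1} = p_{k+1}$, then dividing by $b_i$, gives $D_2 L(Q_k^i,V_k^i) = p_k + h\sum_l (b_l - b_l a_{li}/b_i)\, W_k^l$. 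Reading off $\hat{a}_{ij} := b_j - b_j a_{ji}/b_i$ and setting the stage momentum $P_k^i := D_2 L(Q_k^i,V_k^i)$, this is exactly the internal momentum equation together with the Legendre relation. Since $b_i \hat{a}_{ij} = b_i b_j - b_j a_{ji}$, the identity $b_i \hat{a}_{ij} + \hat{b}_j a_{ji} = b_i \hat{b}_j$ holds automatically once $\hat{b}_j = b_j$, so the symplecticity conditions are a \emph{consequence} of the variational equations rather than an assumption.

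The main obstacle is not any individual differentiation but the bookkeeping around two distinct momentum-type quantities carrying the same symbol: the multiplier conjugate to the stage-position constraint (which is $O(h)$ and equals $-h W_k^i$) versus the genuine stage momentum $P_k^i = D_2 L(Q_k^i,V_k^i)$. Carrying out the summation-by-parts in the $q_k$-variation so that the nodal momenta match across consecutive steps, and then organizing the substitutions so that $\hat{a}_{ij}$ and $\hat{b}_j$ emerge in exactly the form satisfying $b_i \hat{a}_{ij} + \hat{b}_j a_{ji} = b_i \hat{b}_j$, is where care is needed. I would also flag the boundary bookkeeping at $k = 0$ and $k = N$, where $p_0$ and $p_N$ play the role of initial and terminal data and the interior matching does not apply, and note that $b_i \neq 0$ is used throughout.
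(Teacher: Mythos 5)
Your proposal is correct and follows essentially the same route as the paper's own proof: compute the variations of the discrete Hamilton--Pontryagin action term by term, recover the kinematic constraints from the multiplier variations, eliminate the stage multiplier $\tilde{P}_k^i = -h\,D_1 L(Q_k^i,V_k^i)$ via the $\delta Q$ equation, perform the discrete summation by parts in $\delta q$ to obtain the nodal momentum update, and substitute into the $\delta V$ equation to read off $\hat{a}_{ij}$ and $\hat{b}_j$ satisfying the symplecticity condition. Your explicit flagging of the distinction between the $O(h)$ constraint multiplier and the stage momentum $D_2 L$, and of the need for $b_i \neq 0$, matches what the paper does implicitly.
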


The condition on the RK coefficients is called the \emph{symplecticity condition} of the partitioned method.

\begin{proof}
Computing the variations of this action we get:
\begin{align*}
\hspace{1cm}&\hspace{-1cm}\left\langle\mathrm{d}\left(\mathcal{J_{HP}}\right)_d, \delta c_d \right\rangle\\
&=\sum_{k = 0}^{N-1} \sum_{i = 1}^s h b_i \left[ \vphantom{\sum_{j=1}^s} \left\langle D_1 L\left(Q_k^i, V_k^i\right), \delta Q_k^i\right\rangle + \left\langle D_2 L\left(Q_k^i, V_k^i\right), \delta V_k^i\right\rangle\right.\\
&+ \left\langle \delta\tilde{P}_k^i, \frac{Q_k^i - q_k}{h} - \sum_{j=1}^s a_{i j} V_k^j\right\rangle + \left\langle \tilde{P}_k^i, \frac{\delta Q_k^i - \delta q_k}{h} - \sum_{j=1}^s a_{i j} \delta V_k^j\right\rangle\\
&+ \left. \left\langle \delta \tilde{p}_{k+1}, \frac{q_{k+1} - q_k}{h} - \sum_{j=1}^s b_j V_k^j\right\rangle + \left\langle \tilde{p}_{k+1}, \frac{\delta q_{k+1} - \delta q_k}{h} - \sum_{j=1}^s b_j \delta V_k^j\right\rangle \right]
\end{align*}

Let us collect all the different terms separately:
\begin{align*}
\delta q &: \sum_{k = 0}^{N-1} \sum_{i = 1}^s b_i \left[ - \left\langle \tilde{P}_k^i, \delta q_k \right\rangle + \left\langle \tilde{p}_{k+1}, \delta q_{k+1} - \delta q_k \right\rangle \right]\\
&\hspace{0.25cm} = \sum_{k = 0}^{N-1} \sum_{i = 1}^s b_i \left[ - \left\langle \tilde{P}_k^i + \tilde{p}_{k+1}, \delta q_k \right\rangle + \left\langle \tilde{p}_{k+1}, \delta q_{k+1} \right\rangle \right]\\
&\hspace{0.25cm} = \sum_{k = 1}^{N-1} \sum_{i = 1}^s b_i \left[ \left\langle -\tilde{P}_k^i + \tilde{p}_{k} -  \tilde{p}_{k+1} , \delta q_k \right\rangle \right]\\
&\hspace{0.25cm}\hspace{0.5cm} + \sum_{i = 1}^s b_i \left\langle \tilde{p}_{N}, \delta q_{N} \right\rangle  - \sum_{i = 1}^s b_i \left\langle \tilde{P}_0^i + \tilde{p}_{1}, \delta q_0 \right\rangle\\
&\hspace{0.25cm} = \sum_{k = 1}^{N-1} \left\langle \tilde{p}_{k} - \tilde{p}_{k+1} - \sum_{i = 1}^s b_i \tilde{P}_k^i , \delta q_k \right\rangle\\
&\hspace{0.25cm}\hspace{0.5cm} + \left\langle \tilde{p}_{N}, \delta q_{N} \right\rangle  - \left\langle \sum_{i = 1}^s b_i \tilde{P}_0^i + \tilde{p}_{1}, \delta q_0 \right\rangle
\end{align*}
where we have used the \ref{eq:order_1_cond} condition.

\begin{align*}
\delta Q &: \sum_{k = 0}^{N-1} \sum_{i = 1}^s b_i \left[ \left\langle h D_1 L\left(Q_k^i, V_k^i\right), \delta Q_k^i\right\rangle + \left\langle \tilde{P}_k^i, \delta Q_k^i\right\rangle \right]\\
&\hspace{0.25cm} = \sum_{k = 0}^{N-1} \sum_{i = 1}^s b_i \left\langle h D_1 L\left(Q_k^i, V_k^i\right) +  \tilde{P}_k^i, \delta Q_k^i\right\rangle
\end{align*}

\begin{align*}
\delta V &: \sum_{k = 0}^{N-1} \sum_{i = 1}^s h b_i \left[ \left\langle D_2 L\left(Q_k^i, V_k^i\right), \delta V_k^i\right\rangle - \left\langle \tilde{P}_k^i, \sum_{j=1}^s a_{i j} \delta V_k^j\right\rangle - \left\langle \tilde{p}_{k+1}, \sum_{j=1}^s b_j \delta V_k^j\right\rangle \right]\\
%&\hspace{0.25cm} = \sum_{k = 0}^{N-1} h \left[ \left\langle \sum_{i = 1}^s b_i D_2 L\left(Q_k^i, V_k^i\right), %\delta V_k^i\right\rangle - \sum_{i = 1}^s \sum_{j=1}^s \left\langle b_i \tilde{P}_k^i, a_{i j} \delta %V_k^j\right\rangle \right.\\
&\hspace{0.25cm}\hspace{0.5cm} \left.- \left\langle \sum_{i = 1}^s b_i \tilde{p}_{k+1}, \sum_{j=1}^s b_j \delta V_k^j\right\rangle\right]\\
&\hspace{0.25cm} = \sum_{k = 0}^{N-1} h \left[ \left\langle \sum_{i = 1}^s b_i D_2 L\left(Q_k^i, V_k^i\right), \delta V_k^i\right\rangle - \sum_{i = 1}^s \sum_{j=1}^s \left\langle b_j \tilde{P}_k^j, a_{j i} \delta V_k^i\right\rangle \right.\\
&\hspace{0.25cm}\hspace{0.5cm} \left.- \left\langle \tilde{p}_{k+1}, \sum_{i=1}^s b_i \delta V_k^i\right\rangle\right]\\
&\hspace{0.25cm} = \sum_{k = 0}^{N-1} \sum_{i=1}^s h \left\langle b_i D_2 L\left(Q_k^i, V_k^i\right) - \sum_{j=1}^s b_j a_{j i} \tilde{P}_k^j - b_i \tilde{p}_{k+1}, \delta V_k^i\right\rangle
\end{align*}

\begin{align*}
\delta \tilde{p} &: \sum_{k = 0}^{N-1} \sum_{i = 1}^s h b_i \left\langle \delta \tilde{p}_{k+1}, \frac{q_{k+1} - q_k}{h} - \sum_{j=1}^s b_j V_k^j\right\rangle
\end{align*}

\begin{align*}
\delta \tilde{P} &: \sum_{k = 0}^{N-1} \sum_{i = 1}^s h b_i \left\langle \delta\tilde{P}_k^i, \frac{Q_k^i - q_k}{h} - \sum_{j=1}^s a_{i j} V_k^j\right\rangle
\end{align*}

From these last two variations we recuperate the original discrete kinematic constraints, as expected.

From $\delta Q$ we get that:
\begin{equation*}
\tilde{P}_k^i = - h D_1 L\left(Q_k^i, V_k^i\right)
\end{equation*}

Inserting this in $\delta q$ we get:
\begin{equation*}
\tilde{p}_{k+1} = \tilde{p}_{k} + h \sum_{i = 1}^s b_i D_1 L\left(Q_k^i, V_k^i\right)
\end{equation*}

Comparing the boundary terms from the continuous and discrete cases we see that these $\tilde{p}_k$ variables are approximations to the continuous $p(t_k)$. Thus we will drop the tildes, making this identification explicit.

From $\delta V$ we find that:
\begin{equation*}
D_2 L\left(Q_k^i, V_k^i\right) =  p_{k+1} - h \sum_{j=1}^s \frac{b_j a_{j i}}{b_i} D_1 L\left(Q_k^j, V_k^j\right)
\end{equation*}

Inserting what we found from $\delta q$ here, we get:
\begin{equation*}
D_2 L\left(Q_k^i, V_k^i\right) =  p_{k} + h \sum_{j = 1}^s b_j \left( 1 - \frac{a_{j i}}{b_i}\right) D_1 L\left(Q_k^j, V_k^j\right)
\end{equation*}

Rewriting the equations using $\hat{b}_i$, $\hat{a}_{i j}$, $P_k^i$ and $W_k^i$ we get the result we were after.
\end{proof}

The resulting system of equations from theorem \ref{thm:VPRK_methods} defines a discrete Hamiltonian map, i.e. a mapping $(q_k, p_k) \mapsto (q_{k+1}, p_{k+1})$ and in order to determine these we will also need to determine the set $\left\lbrace Q_k^i, V_k^i\right\rbrace_{i=1}^{s}$. By theorem \ref{thm:variational_error}, the order of the discrete Hamiltonian map will coincide with the order of the RK method applied \cite{hairer,marsden-west}. 

\subsection{Discrete holonomically constrained mechanical systems}
We will not go into much detail here as it would take much space to delve into the details and the caveats involved in the study of holonomically constrained mechanical systems \cite{LeRe}. Conceptually and geometrically there is not a big departure from the unconstrained case save for the particularities of the augmented picture, where we include Lagrange multipliers which make the augmented Lagrangian and its corresponding discrete counterpart singular. The reader is referred to \cite{marsden-west,BouRabeeOwhadi,Jay1996}  for more information.

As in the continuous case, let $i_N: N \hookrightarrow Q$ denote the inclusion map of the submanifold $N$ in $Q$. We may naturally extend this inclusion to $Q \times Q$ where discrete Lagrangians live as $i_{N \times N}: N \times N \hookrightarrow Q \times Q$, $i_{N\times N}(q_0,q_1) = (i_N(q_0),i_N(q_1))$. Given a discrete Lagrangian  $L_d$ we may define the restricted discrete Lagrangian $L_d^N = L_d \circ i_{N\times N}$.

Let us now define the augmented space, $Q \times \Lambda$, where $\Lambda \cong \mathbb{R}^{m}$ is the space of Lagrange multipliers and $m = \mathrm{codim}_Q N$, the number of independent holonomic constraints. With this we may also define an augmented discrete Lagrangian $\tilde{L}_d: Q \times \Lambda \times Q \times \Lambda \to \mathbb{R}$ as an approximation to the exact discrete Lagrangian for $\tilde{L} = L + \left\langle\lambda, \Phi\right\rangle$. We will be more interested in the augmented approach, as we will use that for nonholonomic systems in the next section (see \cite{mamast} for a more intrinsic approach using Lagrangian submanifolds).

Variation of the augmented discrete action leads to:
\begin{align}
0 &= \delta \sum_{k = 0}^{N-1} L_d (q_k, q_{k+1}) + \sum_{k = 0}^{N-1} \left[ \left\langle f_d^{-}(q_k, \lambda_{k}, q_{k+1}, \lambda_{k+1}), \delta q_k\right\rangle \right. +\nonumber\\
&+ \left. \left\langle f_d^{+}(q_k, \lambda_{k}, q_{k+1}, \lambda_{k+1}), \delta q_{k+1}\right\rangle \right] + h \sum_{k = 0}^{N-1} \sum_{i = 1}^{s} b_i \left\langle\delta \Lambda_k^i, \Phi\left(Q_k^i\right)\right\rangle
\label{eq:discrete_constrained_variation}
\end{align}
where:
\begin{subequations}
	\label{eq:holonomic_forcing}
	\begin{alignat}{2}
		f_d^{-}(q_k, \lambda_{k}, q_{k+1}, \lambda_{k+1}) &= h \sum_{i = 1}^{s} b_i \left\langle\Lambda_k^i, D\Phi\left(Q_k^i\right)\frac{\partial Q_k^i}{\partial q_{k}},\right\rangle\\
		f_d^{+}(q_k, \lambda_{k}, q_{k+1}, \lambda_{k+1}) &= h \sum_{i = 1}^{s} b_i \left\langle\Lambda_k^i, D\Phi\left(Q_k^i\right) \frac{\partial Q_k^i}{\partial q_{k+1}},\right\rangle
	\end{alignat}
\end{subequations}
are the forcing terms arising from the constraints. Let us assume that we discretize our Lagrangian applying the trapezoidal rule, which simplifies these to:
\begin{align*}
f_d^{-}(q_k, \lambda_{k}, q_{k+1}, \lambda_{k+1}) &= \frac{h}{2} \left\langle \lambda_{k}, D\Phi\left(q_k\right)\right\rangle,\nonumber\\
f_d^{+}(q_k, \lambda_{k}, q_{k+1}, \lambda_{k+1}) &= \frac{h}{2} \left\langle \lambda_{k+1}, D\Phi\left(q_{k+1}\right)\right\rangle.
\end{align*}

Then the constrained discrete Euler-Lagrange equations take the form:
\begin{align*}
D_2 L_d(q_{k-1}, q_{k}) + D_1 L_d(q_{k}, q_{k+1}) & = - h\left\langle \lambda_{k}, D\Phi(q_k)\right\rangle\\
\Phi(q_k) & = 0, \quad \forall k = 0, ..., N
\end{align*}
which means the solution must satisfy:
\begin{equation*}
\left(T^*i_{N}\right)_{q_k} \left[ D_2 L_d \circ i_{N \times N}(\tilde{q}_{k-1}, \tilde{q}_{k}) + D_1 L_d \circ i_{N \times N} (\tilde{q}_{k}, \tilde{q}_{k+1}) \right] = 0.
\end{equation*}
where $\tilde{q}_k \in N$.

Using the restricted discrete Lagrangian the equations simplify to the expected:
\begin{equation*}
D_2 L_d^N(\tilde{q}_{k-1}, \tilde{q}_{k}) + D_1 L_d^N(\tilde{q}_{k}, \tilde{q}_{k+1}) = 0
\end{equation*}

The equivalent symplectic integrator written in Hamiltonian form becomes:
\begin{align*}
p_0 &= - D_1 L_d(q_0, q_1) - \frac{h}{2}\left\langle\lambda_0, D\Phi(q_0)\right\rangle\\
p_1 &= D_2 L_d(q_0, q_1) + \frac{h}{2}\left\langle\lambda_1, D\Phi(q_1)\right\rangle\\
0 &= \Phi(q_1)\\
0 &= \left\langle D\Phi(q_1), \frac{\partial H}{\partial p}(q_1,p_1)\right\rangle
\end{align*}
where the last equation must be enforced so that $p_1 \in T^*_{q_1} N$. This latter method is known as RATTLE and its augmented Lagrangian version is the SHAKE method
\cite{rycibe, andersen, leimkhulerskeel, LeRe, marsden-west}. 

Working with the augmented discrete Hamilton-Pontryagin action:
\begin{align}
\left(\widetilde{\mathcal{J}}_\mathcal{{HP}}\right)_d = &\sum_{k = 0}^{N-1} \sum_{i = 1}^s h b_i \left[ L\left(Q_k^i, V_k^i\right) + \left\langle \tilde{P}_k^i, \frac{Q_k^i - q_k}{h} - \sum_{j=1}^s a_{i j} V_k^j\right\rangle\right.\label{eq:augmented_discrete_Hamilton-Pontriagyn_action}\\
&+ \left. \left\langle \tilde{p}_{k+1}, \frac{q_{k+1} - q_k}{h} - \sum_{j=1}^s b_j V_k^j\right\rangle + \left\langle \Lambda_k^i, \Phi(Q_k^i)\right\rangle\right]\nonumber
\end{align}
we can obtain a constrained version of the symplectic partitioned Runge-Kutta method:
\begin{subequations}
\label{eq:holonomic_integrator}
\begin{alignat}{2}
q_{k+1} & = q_k + h \sum_{j = 1}^{s} b_{j} V_k^j, & p_{k+1} & = p_k + h \sum_{j = 1}^{s} \hat{b}_{j} W_k^j,\\
Q_k^i & = q_k + h \sum_{j = 1}^{s} a_{i j} V_k^j, & P_k^i & = p_k + h \sum_{j = 1}^{s} \hat{a}_{i j} W_k^j,\\
W_k^i & = D_1 L(Q_k^i, V_k^i) + \left\langle\Lambda_k^i, D\Phi(Q_k^i)\right\rangle \vphantom{\sum_{j = 1}^{s}} & P_k^i & = D_2 L(Q_k^i, V_k^i) \vphantom{\sum_{j = 1}^{s}}\\
0 & = \Phi(Q_k^i) \vphantom{\sum_{j = 1}^{s}} & 0 & = \left\langle D\Phi(q_{k+1}), D_2 H(q_{k+1},p_{k+1})\right\rangle \vphantom{\sum_{j = 1}^{s}}\label{eq:holonomic_integrator_subeq5}
\end{alignat}
\end{subequations}
where $(a_{i j}, b_i)$ and $(\hat{a}_{i j}, \hat{b}_i)$ are a pair of symplectically conjugated methods. The tangency condition on $q_{k+1}$ (eq.\eqref{eq:holonomic_integrator_subeq5}, right) must be judiciously added to close the system, allowing us to obtain a Hamiltonian map $(q_k,p_k) \mapsto (q_{k+1},p_{k+1})$.

Unfortunately not every choice of RK method will give us the expected variational order for the Hamiltonian flow on $N$ (see \cite{Jay1993, Jay1996, Jay2003, marsden-west, JohnsonMurphey}. Indeed, numerical tests already tell us that for a $2$-stage Gau{\ss} method whose corresponding variational order should be $4$ actually has order $2$ when projected to $N$.

Forcing  the order to coincide, the corresponding augmented discrete Hamilton-Pontryagin action must restrict to $N$ as well. To warrant that the method restricts to $N$ we impose that $(a_{i j}, b_i)$ satisfy the hypotheses:
\begin{enumerate}[label=H\arabic*]
\item \label{itm:H1} $a_{1 j} = 0$ for $j = 1, ..., s$;
\item \label{itm:H2} the submatrix $\tilde{A} := (a_{i j})_{i,j \geq 2}$ is invertible;
\item \label{itm:H3} $a_{s j} = b_j$ for $j = 1, ..., s$ (the method is \emph{stiffly accurate}).
\end{enumerate}

If $(\hat{a}_{i j}, \hat{b}_i)$ are the coefficients of the symplectic conjugated of the former method, then they must satisfy:
\begin{enumerate}[label=H\arabic*']
\item \label{itm:H1'} $\hat{a}_{i s} = 0$ for $i = 1, ..., s$;
\item \label{itm:H2'} $\hat{a}_{i 1} = \hat{b}_1$ for $i = 1, ..., s$.
\end{enumerate}

\ref{itm:H1} and \ref{itm:H3} imply that $c_1 = 0$ and $c_s = 1$, and thus for any given $k = 0,..., N-1$ the first and last internal stages must coincide with the nodal values, i.e. $Q_k^1 = q_k$ and $Q_k^s = q_{k+1}$, which ensures that we can impose the constraints on purely variational grounds and so the associated augmented discrete Hamilton-Pontryagin must restrict to $N$.

A particular member of the family is the Lobatto IIIA-B pair, whose $2$-stage version is the well-known trapezoidal rule that we applied first.

\section{Discrete nonholonomic mechanics}\label{sec:discrete-nonholonomic}
It is natural to wonder whether we can construct integrators for mechanical nonholonomic systems in a similar manner as we have done with our variational integrators \cite{cortes02,perlmutter06,dLdDSM04,MR2164739,GNI}

As we already know, nonholonomic mechanics is not variational, yet we know that Chetaev's principle is not a radical departure from Hamilton's principle and the resulting equations of motion are fairly similar to those of a holonomic system \cite{neimarkfufaev,bloch,borisovhistoria,cortes02}. It is also true that nonholonomic mechanics is not symplectic either, so the value of applying the philosophy of symplectic integrators seems at least questionable, since there is not, in general, preservation of any symplectic or Poisson structure \cite{SM1994, CdLdD99}. Still, given that the departure from holonomic mechanics is not that dramatic and the generally good behaviour of variational integrators we still believe it is worth trying to extend our approach to nonholonomic systems.

The fact that these systems are not variational implies that the important result of theorem \ref{thm:variational_error} does not apply anymore, which strips us from one of our main tools to prove the order of the resulting methods. This leaves us with standard numerical analysis techniques and results to try and prove the order on a \emph{per family} basis. Again we will focus on symplectic RK pairs satisfying all the hypotheses stated in the holonomically constrained case.

Without further ado, we present the following {\sl nonholonomic partitioned Runge-Kutta integrator}: Let $(L, \Phi)$ be a regular nonholonomic Lagrangian system, with $\Psi = \Phi \circ \mathbb{F}L^{-1}$, then the equations for the integrator are:
\begin{subequations}
	\label{eq:nonholonomic_integrator}
	\begin{alignat}{2}
		q_{k+1} & = q_k + h \sum_{i = 1}^{s} b_{i} V_k^i, & p_{k+1} & = p_k + h \sum_{i = 1}^{s} \hat{b}_{i} W_k^i,\\
		Q_k^i & = q_k + h \sum_{j = 1}^{s} a_{i j} V_k^j, & P_k^i & = p_k + h \sum_{j = 1}^{s} \hat{a}_{i j} W_k^j,\\
		W_k^i & = D_1 L(Q_k^i, V_k^i) + \left\langle\Lambda_k^i, D_2 \Phi(Q_k^i, V_k^i)\right\rangle \vphantom{\sum_{j = 1}^{s}}, & P_k^i & = D_2 L(Q_k^i, V_k^i) \vphantom{\sum_{j = 1}^{s}},\\
		q_k^i & = Q_k^i \vphantom{\sum_{j = 1}^{s}}, & p_k^i & = p_k + h \sum_{j = 1}^{s} a_{i j} W_k^j \vphantom{\sum_{j = 1}^{s}},
	\end{alignat}
	\begin{equation}
		\Psi(q_k^i, p_k^i) = 0 \vphantom{\sum_{j = 1}^{s}}.\label{eq:nonholonomic_integrator_sub5}
	\end{equation}
\end{subequations}
where $(q_k, p_k, \lambda_k)$ are the initial data that must be supplied. This generates a flow
\begin{equation*}
\begin{array}{rccc}
\widetilde{F}_{d,nh}:& \left.T^*Q\right\vert_M \times \Lambda &\to    & \left.T^*Q\right\vert_M \times \Lambda\\
                 &(q_k,p_k,\lambda_k=\Lambda_k^1)         &\mapsto& (q_{k+1},p_{k+1},\lambda_{k+1}=\Lambda_k^s).
\end{array}
\end{equation*}

Of course, it is possible to apply the continuous fibre derivative $\mathbb{F}L$, and work only with $\Phi$ and forget about $\Psi$. We need only to introduce the variables $v_k, v_k^i \in TQ$ implicitly defined using the continuous Lagrangian by $p_k = D_2 L(q_k,v_k)$ and $p_k^i = D_2 L(q_k^i,v_k^i)$ respectively and change the constraint equations to $\Phi(q_k^i, v_k^i)$, thus generating the flow
\begin{equation*}
\begin{array}{rccc}
F_{d,nh}:& \left.TQ\right\vert_N \times \Lambda &\to    & \left.TQ\right\vert_N \times \Lambda\\
         &(q_k,v_k,\lambda_k=\Lambda_k^1)       &\mapsto& (q_{k+1},v_{k+1},\lambda_{k+1}=\Lambda_k^s).
\end{array}
\end{equation*}

A purely Hamiltonian version of this method would be:
\begin{subequations}
	\label{eq:Hamiltonian_nonholonomic_integrator}
	\begin{alignat}{2}
		q_{k+1} & = q_k + h \sum_{i = 1}^{s} b_{i} V_k^i, & \quad p_{k+1} & = p_k - h \sum_{i = 1}^{s} \hat{b}_{i} W_k^i,\\
		Q_k^i & = q_k + h \sum_{j = 1}^{s} a_{i j} V_k^j, & P_k^i & = p_k - h \sum_{j = 1}^{s} \hat{a}_{i j} W_k^j,\\
		V_k^i & = D_2 H(Q_k^i, P_k^i), & W_k^i & = D_1 H(Q_k^i, P_k^i) - \left\langle \Lambda_k^i, \flat_H\left( D_2 \Psi\right)(Q_k^i, P_k^i)\right\rangle , \vphantom{\sum_{j = 1}^{s}}\\
		q_k^i & = Q_k^i \vphantom{\sum_{j = 1}^{s}}, & p_k^i & = p_k - h \sum_{j = 1}^{s} a_{i j} W_k^j \vphantom{\sum_{j = 1}^{s}},
	\end{alignat}
	\begin{equation}
		\Psi(q_k^i, p_k^i) = 0 \vphantom{\sum_{j = 1}^{s}}.
	\end{equation}
\end{subequations}

The method admits a similar interpretation as its holonomic counterpart:
\begin{align}
0 &= \delta \sum_{k = 0}^{N-1} L_d (q_k, q_{k+1}) + \sum_{k = 0}^{N-1} \left[ \left\langle f_{d,nh}^{-}(q_k, \lambda_{k}, q_{k+1}, \lambda_{k+1}), \delta q_k\right\rangle \right. +\nonumber\\
&+ \left. \left\langle f_{d,nh}^{+}(q_k, \lambda_{k}, q_{k+1}, \lambda_{k+1}), \delta q_{k+1}\right\rangle \right] + h \sum_{k = 0}^{N-1} \sum_{i = 1}^{s} b_i \left\langle\delta \Lambda_k^i, \Phi\left(q_k^i, v_k^i\right)\right\rangle
\label{eq:discrete_constrained_variation_nh}
\end{align}
where now eqs.\eqref{eq:holonomic_forcing} become:
\begin{subequations}
	\label{eq:nonholonomic_forcing}
	\begin{alignat}{2}
		f_{d,nh}^{-}(q_k, \lambda_k, q_{k+1}, \lambda_{k+1}) &= h \sum_{i = 1}^{s} b_i \left\langle\Lambda_k^i, D_2\Phi\left(Q_k^i,V_k^i\right)\frac{\partial Q_k^i}{\partial q_{k}}\right\rangle\\
		f_{d,nh}^{+}(q_k, \lambda_k, q_{k+1}, \lambda_{k+1}) &= h \sum_{i = 1}^{s} b_i \left\langle\Lambda_k^i, D_2\Phi\left(Q_k^i,V_k^i\right) \frac{\partial Q_k^i}{\partial q_{k+1}}\right\rangle
	\end{alignat}
\end{subequations}
with $\lambda_k = \Lambda_k^1$ and $\lambda_{k+1} = \Lambda_k^s$. Note that once $q_k, \lambda_k$ and $q_{k+1}$ are set, $\lambda_{k+1}$ is fixed by the equations of the integrator. Still, these allow us to define discrete nonholonomic Legendre transforms over the solutions of the integrator:
\begin{eqnarray*}
\mathbb{F}L_{d,nh}^{-}(q_0,\lambda_0,q_1,\lambda_1) &=& (q_0, p_0 = - D_1 L_d(q_0,q_1) - f_{d,nh}^{-}(q_k, \lambda_{k}, q_{k+1}, \lambda_{k+1}),\lambda_0)\\
\mathbb{F}L_{d,nh}^{+}(q_0,\lambda_0,q_1,\lambda_1) &=& (q_1, p_1 = D_2 L_d(q_0,q_1) + f_{d,nh}^{+}(q_k, \lambda_{k}, q_{k+1}, \lambda_{k+1}),\lambda_1)
\end{eqnarray*}
This interpretation lends itself to a nonholonomic Hamilton-Jacobi viewpoint which we hope we will be able to explore in the future. It also leads to the possibility of diagrammatically representing our algorithm as:
\begin{center}
\small
\begin{tikzcd}[column sep=tiny, row sep=huge]
		Q \times \Lambda \times Q \times \Lambda :& & (q_0,\lambda_0,q_1,\lambda_1) \arrow[dl, mapsto, "\mathbb{F}L_{d,nh}^{-}"'] \arrow[dr, mapsto, "\mathbb{F}L_{d,nh}^{+}"] \arrow[rr, mapsto, "F_{L_d^{\Lambda}}"] & & (q_1,\lambda_1,q_2,\lambda_2) \arrow[dl, mapsto, "\mathbb{F}L_{d,nh}^{-}"'] \arrow[dr, mapsto, "\mathbb{F}L_{d,nh}^{+}"] &\\
		T^*Q \times \Lambda :& (q_0,p_0,\lambda_0) \arrow[rr, mapsto, "\widetilde{F}_{d,nh}"] \arrow[d, mapsto, "\mathbb{F}L^{-1}"] & & (q_1,p_1,\lambda_1) \arrow[rr, mapsto, "\widetilde{F}_{d,nh}"] \arrow[d, mapsto, "\mathbb{F}L^{-1}"] & & (q_2,p_2,\lambda_2) \arrow[d, mapsto, "\mathbb{F}L^{-1}"]\\
		TQ \times \Lambda :& (q_0,v_0,\lambda_0) \arrow[rr, mapsto, "F_{d,nh}"] & & (q_1,v_1,\lambda_1) \arrow[rr, mapsto, "F_{d,nh}"] & & (q_2,v_2,\lambda_2)
\end{tikzcd}
\end{center}
where $F_{L_d^{\Lambda}}$ is implicitly defined to close the diagram.

We can prove the following theorems concerning the convergence of the method (please, refer to appendix \ref{sec:app_num_sol} for the definition of the symplifying assumptions and the function $\mathcal{R}(\infty)$):
\begin{theorem}\label{thm:local_convergence}
\textbf{Local convergence.}
Assume an $s$-stage Runge-Kutta method $(a_{i j}, b_{j})$ whose coefficients satisfy hypotheses \ref{itm:H1}, \ref{itm:H2} and \ref{itm:H3} together with the symplifying assumptions $B(p), C(q)$ and $D(r)$. Assume also that $(\hat{a}_{i j}, \hat{b}_{j})$ represents the symplectic conjugate of the former (consequently satisfying \ref{itm:H1'} and \ref{itm:H2'} together with the symplifying assumptions $\widehat{B}(p), \widehat{C}(r)$ and $\widehat{D}(q)$). Then the the integrator defined by eqs.\eqref{eq:nonholonomic_integrator} for a regular nonholonomic Lagrangian system $(L, \Phi)$ with $\Psi = \Phi \circ \mathbb{F}L^{-1}$ with consistent initial condition $(q_0,p_0,\lambda_0)$, i.e. such that $\Psi(q_0,p_0) = 0$ and with $\lambda_0$ determined from the continuous problem, satisfies the local convergence estimates:
\begin{subequations}
	\label{eq:local_error_components}
	\begin{alignat}{2}
		q_1 - q(t_0 + h) &= \mathcal{O}(h^{\min(p, q + r + 1) + 1})\\
		p_1 - p(t_0 + h) &= \mathcal{O}(h^{\min(p, 2 q, q + r) + 1})\\
		\lambda_1 - \lambda(t_0 + h) &= \mathcal{O}(h^{q})
	\end{alignat}
\end{subequations}
where $(q(t), p(t), \lambda(t))$ is the exact solution of the nonholonomic problem.
\end{theorem}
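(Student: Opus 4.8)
The plan is to recognize the integrator \eqref{eq:nonholonomic_integrator} as a partitioned Runge--Kutta discretization of the nonholonomic equations written as an index-$2$ differential-algebraic equation (DAE), and then to carry out a one-step (local) error analysis in the spirit of the classical theory for constrained systems \cite{Jay1993,Jay1996,Jay2003}. First I would rewrite the continuous Hamiltonian nonholonomic system as the semi-explicit DAE $\dot{q} = D_2 H(q,p)$, $\dot{p} = -D_1 H(q,p) + \langle \lambda, \flat_H(D_2\Psi)(q,p)\rangle$, $0 = \Psi(q,p)$, and verify that its differentiation index equals $2$: differentiating the constraint once along the flow and substituting the dynamics yields an equation whose solvability for $\lambda$ is precisely the compatibility condition, i.e.\ regularity of $(C^{ab})$, cf.\ \eqref{eq:Lagrange_multipliers_determination}. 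This guarantees that, for the consistent initial data assumed in the statement, the exact solution $(q(t),p(t),\lambda(t))$ is smooth and uniquely defined on a neighborhood of $t_0$, so that the local error is well posed.

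The core of the argument is the standard defect-and-propagation technique. I would insert the exact solution into the scheme, setting $Q_k^i = q(t_0 + c_i h)$, $P_k^i = p(t_0 + c_i h)$, and so on, and Taylor-expand each relation of \eqref{eq:nonholonomic_integrator} in $h$. The residuals (defects) in the stage equations, the nodal updates, and the constraint equations then have orders controlled by the simplifying assumptions: $B(p)$ governs the quadrature defect in the updates, $C(q)$ the internal-stage defect for the $q$-variables, and $D(r)$ (together with $\widehat{B},\widehat{C},\widehat{D}$) the adjoint defects for the momenta, in the usual way from Butcher theory. Subtracting these perturbed relations from those satisfied exactly by the numerical solution and linearizing about the exact trajectory produces a linear system for the stage errors $\Delta Q^i, \Delta P^i, \Delta\Lambda^i$, whose coefficient operator is assembled from $A$, $\hat{A}$, the invertible block $\tilde{A}$ of \ref{itm:H2}, and the Jacobians of $L$, $\Phi$ and $\Psi$.

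I would then invert this linear system for the stage errors and propagate them through the nodal updates for $q_{k+1}$ and $p_{k+1}$, reading off $\lambda_1 = \Lambda_0^s$. The hypotheses \ref{itm:H1}--\ref{itm:H3} and \ref{itm:H1'}--\ref{itm:H2'} (whence $c_1 = 0$, $c_s = 1$, stiff accuracy, and $\mathcal{R}(\infty) = 0$) are exactly what is needed so that the algebraic component is not amplified by a negative power of $h$ and so that the boundary stages coincide with the nodal values, keeping the iterates on the constraint manifold. Combining the defect orders with the single power of $h$ that is unavoidably lost through the index-$2$ algebraic variable yields the three stated estimates; the finer exponents, namely $q+r+1$ in the $q$-component and the cancellation producing $2q$ in the $p$-component, arise by exploiting the symplecticity relation $b_i \hat{a}_{ij} + \hat{b}_j a_{ji} = b_i \hat{b}_j$ jointly with $B(p)$, $C(q)$ and $D(r)$, while the bare $\mathcal{O}(h^q)$ for $\lambda_1$ reflects that no such $+1$ is available for the algebraic variable.

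The hardest part will be the precise bookkeeping of how defects of different orders in the three coupled blocks (the $q$-block, the $p$-block, and the constraint block) interact under the linear solve and the update. In particular, $\lambda$ enters the momentum equation only through the constraint force, so the index-$2$ structure means that a crude estimate loses a full power of $h$; recovering the sharp exponents---above all the superconvergent $2q$ term in the $p$-estimate---requires using the symplectic conjugacy of $(a_{ij},b_i)$ and $(\hat{a}_{ij},\hat{b}_i)$ together with the simplifying assumptions to expose the exact cancellations, rather than relying on worst-case bounds.
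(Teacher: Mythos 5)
The paper's own ``proof'' of this theorem is a single sentence: it invokes Theorems 3.6--7 of the reference \cite{SatoMartinDeAlmagro} (a companion paper on partitioned Runge--Kutta methods for index-2 DAEs) and applies them to the nonholonomic equations. Your proposal is, in effect, an outline of what the proof of those external theorems looks like: recast the nonholonomic Hamiltonian system as a semi-explicit index-2 DAE (correct -- one differentiation of $\Psi(q,p)=0$ exposes $\lambda$ through $\dot p$, and solvability is exactly the compatibility condition on $(C^{ab})$), then run the classical defect-and-propagation analysis of Jay and of Hairer--Lubich--Roche, using $B(p)$, $C(q)$, $D(r)$ and the hypotheses \ref{itm:H1}--\ref{itm:H3}, \ref{itm:H1'}--\ref{itm:H2'} to control the linear solve for the stage errors. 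This is the right framework and matches the route the cited reference takes; what it buys over the paper's presentation is an actual indication of where each exponent comes from.

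That said, two things keep this from being a proof rather than a plan. First, the decisive quantitative content of the theorem -- that the $q$-error is $\mathcal{O}(h^{\min(p,\,q+r+1)+1})$, that the $p$-error enjoys the superconvergent $2q$ term, and that $\lambda$ loses exactly one order down to $\mathcal{O}(h^{q})$ -- is precisely the part you defer (``arise by exploiting the symplecticity relation\dots''). These exponents are not generic consequences of the setup; they require the explicit inversion of the coupled stage-error system and the identification of the cancellations produced by $b_i\hat a_{ij}+\hat b_j a_{ji}=b_i\hat b_j$ jointly with $D(r)$, which is the entire technical burden of \cite{SatoMartinDeAlmagro} and of Jay's earlier work. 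As written, the proposal restates the conclusion at the point where the proof should occur. Second, a factual slip: hypotheses \ref{itm:H1}--\ref{itm:H3} do \emph{not} give $\mathcal{R}(\infty)=0$; for the Lobatto IIIA family one has $\mathcal{R}_A(\infty)=(-1)^{s-1}$, as Corollary \ref{cor:global_error_Lobatto} uses. This is harmless for the local estimate (where $\mathcal{R}(\infty)$ plays no role -- it only enters the global propagation of the algebraic variable in Theorem \ref{thm:global_convergence}), but the argument you attach to it (``so that the algebraic component is not amplified by a negative power of $h$'') should instead rest on the uniform invertibility of the stage system, i.e.\ on \ref{itm:H2} together with the compatibility condition.
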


\begin{theorem}\label{thm:global_convergence}
\textbf{Global convergence.}
In addition to the hypotheses of theorem \ref{thm:local_convergence}, suppose that $\left\Vert \mathcal{R}_A(\infty)\right\Vert \leq 1$, where $\mathcal{R}_A$ denotes the stability function of the method $(a_{i j}, b_j)$, and $q \geq 1$ if $\mathcal{R}_{A}(\infty)$. Then for $t_N - t_0 = N h \leq C$, where $C$ is some constant, the global error satisfies:
\begin{subequations}
	\label{eq:global_error_components}
	\begin{alignat}{2}
		q_N - q(t_N) &= \mathcal{O}(h^{\min(p, q + r + 1)})\label{eq:global_error_q}\\
		p_N - p(t_N) &= \mathcal{O}(h^{\min(p, 2 q, q + r)})\label{eq:global_error_p}\\
		\lambda_N - \lambda(t_N) &= \left\lbrace
			\begin{array}{rl}
			\mathcal{O}(h^{q}) & \text{if } -1 \leq \mathcal{R}_A(\infty) < 1,\\
			\mathcal{O}(h^{q - 1}) & \text{if } \mathcal{R}_A(\infty) = 1.
			\end{array} \right.\label{eq:global_error_lambda}
	\end{alignat}
\end{subequations}
\end{theorem}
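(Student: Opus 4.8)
The plan is to deduce the global estimates from the local ones of Theorem \ref{thm:local_convergence} by a discrete variation-of-constants (Lady Windermere's fan) argument, adapted to the differential-algebraic structure of the problem. Since the hidden constraint $\Psi(q_k^i,p_k^i)=0$ plays the role of an algebraic equation and $\lambda$ that of the associated algebraic variable, the natural framework is the convergence theory of Runge--Kutta methods for index-$2$ DAEs; the arguments transfer to our setting once the propagation of the three error components is separated according to their differential or algebraic character.

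First I would introduce the global errors $\delta q_k = q_k - q(t_k)$, $\delta p_k = p_k - p(t_k)$ and $\delta\lambda_k = \lambda_k - \lambda(t_k)$, and derive a one-step error recursion by subtracting the integrator equations \eqref{eq:nonholonomic_integrator} evaluated along the numerical solution from the same equations evaluated along the exact trajectory, which satisfies them up to the local residuals controlled by Theorem \ref{thm:local_convergence}. Linearizing about the exact solution and using the simplifying assumptions $B(p),C(q),D(r)$ and their hatted counterparts to bound the internal-stage errors $\delta Q_k^i,\delta V_k^i,\delta W_k^i$, one is led to a recursion of the schematic form
\begin{equation*}
\begin{pmatrix}\delta q_{k+1}\\ \delta p_{k+1}\end{pmatrix}
= \bigl(I + h\,B_k\bigr)\begin{pmatrix}\delta q_{k}\\ \delta p_{k}\end{pmatrix} + \ell_k,
\qquad
\delta\lambda_{k+1} = \mathcal{R}_A(\infty)\,\delta\lambda_k + m_k,
\end{equation*}
where $B_k$ is uniformly bounded and $\ell_k,m_k$ are the local defects of the orders furnished by Theorem \ref{thm:local_convergence}.

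For the differential pair $(q,p)$ the amplification is $I+\mathcal{O}(h)$, so a standard discrete Gronwall estimate shows that the $N=\mathcal{O}(h^{-1})$ local contributions, of sizes $\mathcal{O}(h^{\min(p,q+r+1)+1})$ and $\mathcal{O}(h^{\min(p,2q,q+r)+1})$, accumulate while losing exactly one power of $h$, yielding \eqref{eq:global_error_q} and \eqref{eq:global_error_p}. The delicate component is $\delta\lambda$: because $\lambda$ is determined pointwise by the constraint rather than integrated, its error is not amplified by $I+\mathcal{O}(h)$ but transferred from one step to the next by the factor $\mathcal{R}_A(\infty)$, the value at infinity of the stability function of $(a_{ij},b_j)$. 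Solving the scalar recursion gives $\delta\lambda_N=\sum_{k}\mathcal{R}_A(\infty)^{\,N-1-k}m_k$ with $m_k=\mathcal{O}(h^{q})$; when $-1\le \mathcal{R}_A(\infty)<1$ the geometric weights are summable uniformly in $h$, so $\delta\lambda_N=\mathcal{O}(h^{q})$, whereas when $\mathcal{R}_A(\infty)=1$ the $N=\mathcal{O}(h^{-1})$ terms add with unit weight and one power is lost, giving $\mathcal{O}(h^{q-1})$, exactly the dichotomy \eqref{eq:global_error_lambda}.

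The hard part will be to justify rigorously the two structural features above: that the multiplier error is transferred precisely by $\mathcal{R}_A(\infty)$, and that the stage constraints can be resolved uniquely at each step. Both hinge on the hypotheses \ref{itm:H1}--\ref{itm:H3}. Since \ref{itm:H1} renders the full Butcher matrix $A$ singular, one must pass to the reduced, invertible block $\tilde{A}$ of \ref{itm:H2} in order to solve the internal stages from the constraint $\Psi(q_k^i,p_k^i)=0$, the compatibility condition guaranteeing invertibility of the relevant $C^{ab}$-type matrix that appears once $D_2\Psi$ is contracted with this linear algebra. The stiff accuracy \ref{itm:H3}, i.e. $a_{sj}=b_j$ so that $q_k^s=q_{k+1}$, is what forces the endpoint to inherit the last internal stage and thereby couples $\delta\lambda_{k+1}$ to $\delta\lambda_k$ through the value the stability function takes on the constraint-normal directions, which for this singular-$A$ family is computed via $\tilde{A}$ and equals $\mathcal{R}_A(\infty)$. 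Tracking how $D_2\Psi$ interacts with this reduced resolvent to isolate exactly that factor is where the book-keeping is most delicate; once the transfer factor is pinned down, combining it with the hypothesis $\left\Vert\mathcal{R}_A(\infty)\right\Vert\le 1$ and the discrete Gronwall estimate for the differential block closes the argument.
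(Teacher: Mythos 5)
The paper does not prove this theorem from scratch: its entire proof is the observation that the nonholonomic system \eqref{eq:nonholonomic_integrator} is an instance of a partitioned Runge--Kutta discretization of an index-2 DAE, so that Theorems 3.6--7 of the cited reference \cite{SatoMartinDeAlmagro} apply verbatim. Your proposal instead reconstructs the argument that such a reference would contain --- a Lady Windermere's fan / discrete Gronwall estimate for the differential block $(q,p)$, losing one power of $h$ over $N=\mathcal{O}(h^{-1})$ steps, together with a separate scalar recursion $\delta\lambda_{k+1}=\mathcal{R}_A(\infty)\,\delta\lambda_k+m_k$ for the algebraic variable. This is indeed the standard architecture of the convergence theory for stiffly accurate Runge--Kutta methods on index-2 systems (Hairer--Lubich--Roche, Jay), and your identification of the roles of \ref{itm:H1}--\ref{itm:H3}, of the reduced block $\tilde{A}$, and of the compatibility condition as the source of solvability of the stage equations is accurate. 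So the route is genuinely different from the paper's (self-contained sketch versus citation), and what it buys is transparency about where each hypothesis enters; what it costs is that the two hardest steps are asserted rather than established.

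Two concrete points need repair before this could stand as a proof. First, your justification of \eqref{eq:global_error_lambda} in the regime $-1\le\mathcal{R}_A(\infty)<1$ claims that ``the geometric weights are summable uniformly in $h$''; this is true for $|\mathcal{R}_A(\infty)|<1$ but false at the endpoint $\mathcal{R}_A(\infty)=-1$ (the Lobatto case with $s$ even has $\mathcal{R}_A(\infty)=(-1)^{s-1}=-1$, so this endpoint is precisely the case Corollary \ref{cor:global_error_Lobatto} relies on). There one must exploit the smooth dependence of the defects on $k$, i.e.\ $m_{k+1}-m_k=\mathcal{O}(h^{q+1})$, and sum by parts so that the alternating sum telescopes; without that extra regularity of the local residuals the bound $\mathcal{O}(h^q)$ does not follow. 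Second, the assertion that the multiplier error is transferred from step to step by exactly the factor $\mathcal{R}_A(\infty)$ --- which you correctly flag as the crux --- is left unproven; it requires explicitly solving the linearized stage-constraint system through $\tilde{A}^{-1}$ and the regularity of the contracted matrix $C^{ab}$, and showing that the resulting endpoint map on the constraint-normal component reduces to $e_s(\mathrm{Id}-zA)^{-1}\mathbbm{1}$ evaluated at $z\to\infty$. As written, the proposal is a faithful and well-organized plan for the proof the cited reference carries out, but not yet a complete proof.
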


\begin{proof}
Both result from the application of \cite{SatoMartinDeAlmagro}, Theorems 3.6-7 to the particular case of nonholonomic mechanical equations.
\end{proof}

\begin{corollary}
\label{cor:global_error_Lobatto}
The global error for the Lobatto IIIA-B method applied to eqs.\eqref{eq:nonholonomic_integrator} is:
\begin{subequations}
	\label{eq:Lobatto_error_components}
	\begin{alignat}{2}
		q_N - q(t_N) &= \mathcal{O}(h^{\min(2 s - 2)})\label{eq:Lobatto_error_q}\\
		p_N - p(t_N) &= \mathcal{O}(h^{\min(2 s - 2)})\label{eq:Lobatto_error_p}\\
		\lambda_N - \lambda(t_N) &= \left\lbrace
			\begin{array}{rl}
			\mathcal{O}(h^{s}) & \text{if $s$ even},\\
			\mathcal{O}(h^{s - 1}) & \text{if $s$ odd}.
			\end{array} \right.\label{eq:Lobatto_error_lambda}
	\end{alignat}
\end{subequations}
\end{corollary}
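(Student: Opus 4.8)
The plan is to read this corollary off as the specialization of Theorem~\ref{thm:global_convergence} to the concrete order constants of the Lobatto IIIA-B pair; the proof is then essentially a bookkeeping of the simplifying assumptions together with one computation of the stability function at infinity. First I would record the structural data of the $s$-stage Lobatto IIIA method, which plays the role of $(a_{ij}, b_j)$. Being built on the $s$-point Lobatto quadrature it satisfies $B(2s-2)$; its stage order is $s$, giving $C(s)$; and it satisfies $D(s-2)$. Hence in the notation of Theorem~\ref{thm:global_convergence} we have $p = 2s-2$, $q = s$ and $r = s-2$. Moreover Lobatto IIIA has a vanishing first row ($a_{1j}=0$), an invertible trailing block $\tilde{A}$, and is stiffly accurate ($a_{sj}=b_j$), so hypotheses \ref{itm:H1}--\ref{itm:H3} hold. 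Its symplectic conjugate is precisely the Lobatto IIIB method $(\hat{a}_{ij}, \hat{b}_j)$, which satisfies $\hat{B}(2s-2)$, $\hat{C}(s-2)$ and $\hat{D}(s)$; identifying these with $\hat{B}(p)$, $\hat{C}(r)$ and $\hat{D}(q)$ confirms that all the hypotheses of Theorem~\ref{thm:local_convergence}, and hence of Theorem~\ref{thm:global_convergence}, are met.

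Next I would substitute these constants into the three estimates of Theorem~\ref{thm:global_convergence}. For the configuration error, $\min(p, q+r+1) = \min(2s-2,\, 2s-1) = 2s-2$; for the momentum error, $\min(p, 2q, q+r) = \min(2s-2,\, 2s,\, 2s-2) = 2s-2$. This already produces eqs.\eqref{eq:Lobatto_error_q} and \eqref{eq:Lobatto_error_p}. For the multiplier error the result is governed by the value $\mathcal{R}_A(\infty)$, so the remaining task is to evaluate the stability function of Lobatto IIIA at infinity.

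The one genuinely non-routine point will be this last computation, since hypothesis \ref{itm:H1} makes the Butcher matrix $A$ singular and the naive formula $\mathcal{R}_A(\infty) = 1 - b^\top A^{-1}\mathbf{1}$ is unavailable. I would instead work with the invertible trailing block $\tilde{A}$ after deflating the trivial first stage, or simply appeal to the classical fact that the $s$-stage Lobatto IIIA method has $\mathcal{R}_A(\infty) = (-1)^{s-1}$ (for $s=2$ this is the trapezoidal rule, with $\mathcal{R}_A(\infty) = -1$). With this in hand $|\mathcal{R}_A(\infty)| = 1$ and $q = s \geq 1$, so the additional hypothesis of Theorem~\ref{thm:global_convergence} holds.

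Finally I would distinguish the two parities. When $s$ is even, $\mathcal{R}_A(\infty) = -1 \in [-1,1)$ and the estimate gives $\mathcal{O}(h^{q}) = \mathcal{O}(h^{s})$; when $s$ is odd, $\mathcal{R}_A(\infty) = 1$ and the estimate gives $\mathcal{O}(h^{q-1}) = \mathcal{O}(h^{s-1})$. This reproduces eq.\eqref{eq:Lobatto_error_lambda}, and together with the configuration and momentum estimates completes the proof. I expect no real difficulty beyond pinning down the exact simplifying-assumption indices and the sign of $\mathcal{R}_A(\infty)$, both of which are standard but must be stated correctly for the case distinction to come out right.
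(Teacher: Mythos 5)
Your proposal is correct and follows essentially the same route as the paper: the published proof simply substitutes $p = 2s-2$, $q = s$, $r = s-2$ and $\mathcal{R}_A(\infty) = (-1)^{s-1}$ into Theorem \ref{thm:global_convergence}, exactly as you do. Your additional verification of hypotheses \ref{itm:H1}--\ref{itm:H3} and the explicit evaluation of the minima are just a more detailed writeup of the same argument.
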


\begin{proof}
To prove this it suffices to substitute $p = 2 s - 2$, $q = s$, $r = s - 2$ and $\mathcal{R}_A(\infty) = (-1)^{s - 1}$ in the former theorem.
\end{proof}

These prove that the order of our method on the submanifold $M$ corresponds to the expected order.

The equations of the integrator should remind the reader of eqs. \ref{eq:holonomic_integrator}, and it is so by construction. The inspiration and intuition behind the proposed method comes precisely from the Lobatto IIIA-B pair mentioned before and we will now proceed to explain how we arrived at these equations.

\subsection{Origin and idea behind the algorithm}
How would the nonholonomic SHAKE would look like? Let us look back at the holonomic case and in particular at eq.\eqref{eq:discrete_constrained_variation}. We see that we need to determine an adequate forcing and impose the constraints.

The natural way to extend the forcing found in the holonomic case, eqs.\eqref{eq:holonomic_forcing}, would be as was already shown in eqs.\eqref{eq:nonholonomic_forcing}. If we apply the trapezoidal rule, these simplify to:
\begin{align*}
f_d^{-}(q_k, q_{k+1}, \lambda_{k}, \lambda_{k+1}) &= \frac{h}{2} \left\langle \lambda_{k}, D_2\Phi\left(q_k, \frac{q_{k+1}-q_{k}}{h}\right)\right\rangle,\\
f_d^{+}(q_k, q_{k+1}, \lambda_{k}, \lambda_{k+1}) &= \frac{h}{2} \left\langle \lambda_{k+1}, D_2\Phi\left(q_{k+1},\frac{q_{k+1}-q_{k}}{h}\right)\right\rangle.
\end{align*}

As for the constraint, we could enforce $\Phi(Q_k^i,V_k^i) = 0$, which in the trapezoidal case would lead to:
\begin{equation*}
\Phi\left(q_{k},\frac{q_{k+1}-q_{k}}{h}\right) = \Phi\left(q_{k+1},\frac{q_{k+1}-q_{k}}{h}\right) = 0, \quad \forall k
\end{equation*}

Note that this would not warrant that our integrator would preserve the continuous constraint, $\Phi(q_{k},v_k) = \Phi(q_{k+1},v_{k+1}) = 0$, where $v_k = \mathbb{F}L^{-1}(p_k)$. Also this is a direct discretization of the constraint manifold, which feels both arbitrary and rough. The more sensible option is indeed to impose the preservation of the continuous constraint, which we prefer to impose as $\Psi(q_k,p_k) = \Psi(q_{k+1},p_{k+1}) = 0$. Thus the integrator becomes:
\begin{align*}
D_2 L_d(q_{k-1}, q_{k}) + D_1 L_d(q_{k}, q_{k+1}) & = - h\left\langle \lambda_{k}, D_2\Phi\left(q_k, \frac{q_{k+1}-q_{k}}{h}\right)\right\rangle\\
\Psi(q_k, p_k) & = 0, \quad \forall k = 0, ..., N
\end{align*}
The discrete Euler-Lagrange equations are still a matching of momenta,
\begin{equation*}
p_k^{-}(q_{k-1},q_k,\lambda_k) = p_k^{+}(q_{k},q_{k+1},\lambda_k)
\end{equation*}
and we can chose either $p_k^{-}$ or $p_k^{+}$ to impose the constraint without difference ( see \cite{dLdDSM04}).

The question now is, how does this method generalize to higher order? When we use the Lobatto IIIA method in eqs.\eqref{eq:discrete_Hamilton-Pontriagyn_action} and apply the discrete Hamilton-Pontryagin principle we automatically obtain the Lobatto IIIB method in eqs.\eqref{eq:symplectic_partitioned_integrator} for the $P_k^i$ momenta. The first method is a continuous collocation method (fig. \ref{fig:continuous_collocation}) and the second is a discontinuous collocation method (fig. \ref{fig:discontinuous_collocation}).

\begin{figure}
\centering
\begin{subfigure}{.5\textwidth}
  \centering
  \includegraphics[width=5.2cm, clip=true, trim=20mm 105mm 40mm 6mm]{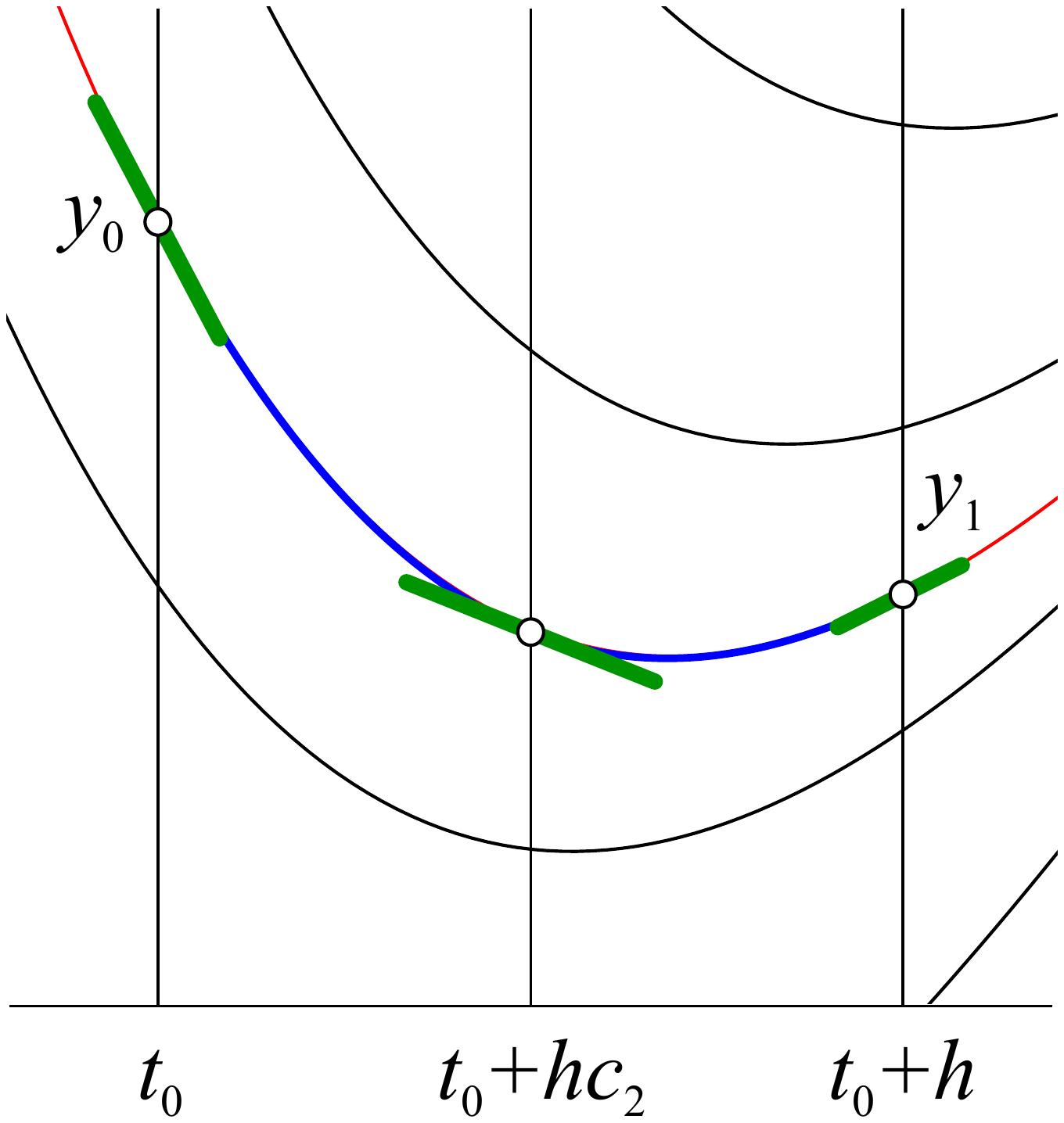}
  \caption{\scriptsize{Continuous collocation}}
  \label{fig:continuous_collocation}
\end{subfigure}%
\begin{subfigure}{.5\textwidth}
  \centering
  \includegraphics[width=5.2cm, clip=true, trim=20mm 105mm 40mm 6mm]{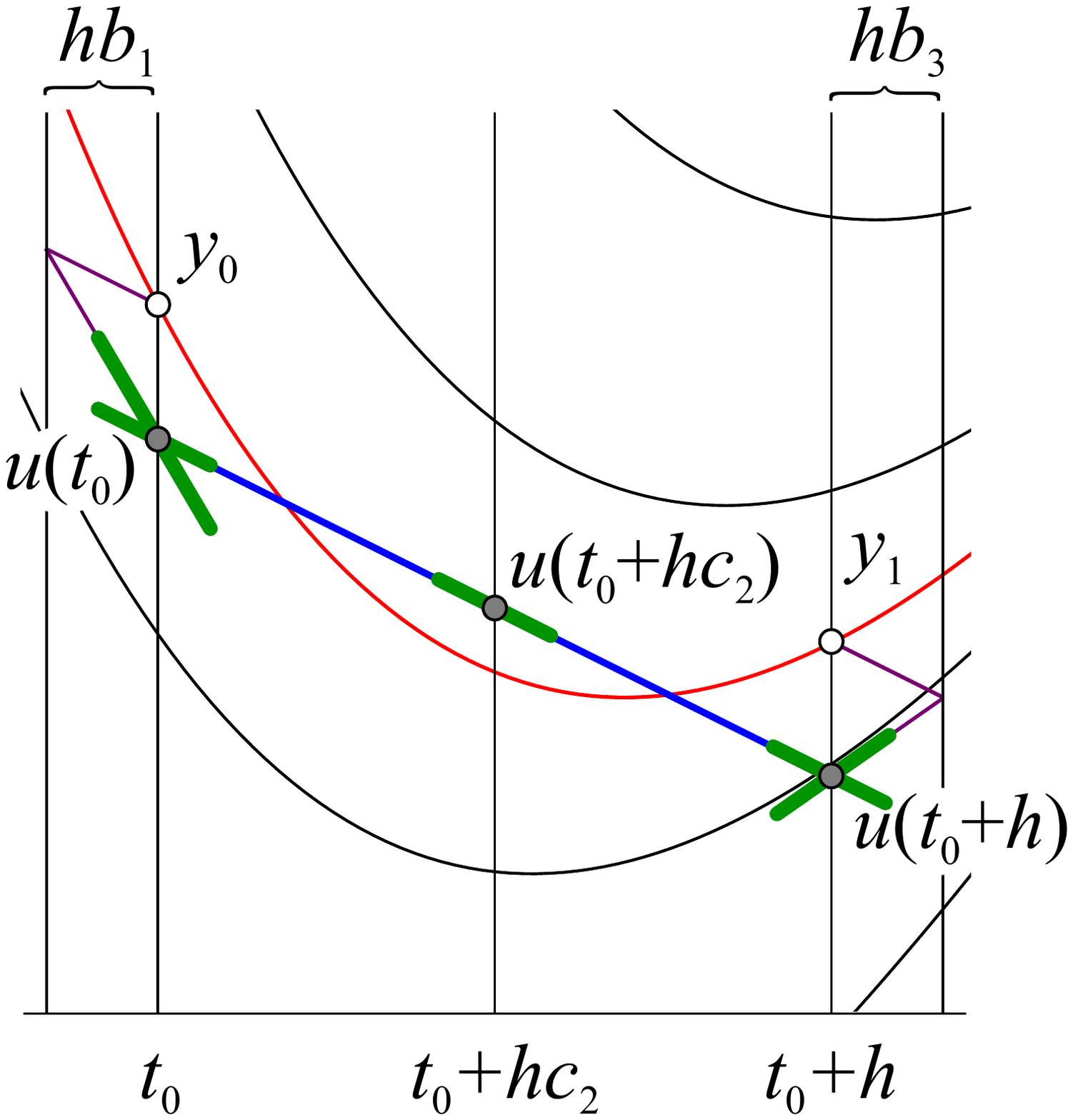}
  \caption{\scriptsize{Discontinuous collocation}}
  \label{fig:discontinuous_collocation}
\end{subfigure}
\caption{\scriptsize{On the left we have a continuous collocation method where the collocation polynomial (in blue) tries to give a good continuous approximation of the solution (in red). On the right we have a discontinuos collocation method applied to the same problem. The collocation polynomial $u(t)$ is a poor approximation of the solution, particularly at collocation points, but it allows us to compute $y_1$ to the same order as the continuous method.}}
\label{fig:collocation_comparison}
\end{figure}

Continuous collocation provides a relatively good (cf. eq.\eqref{eq:continuous_degree_of_approximation}) continuous approximation of the solution whereas discontinuous (cf. eq.\eqref{eq:discontinuous_degree_of_approximation}) offers a poorer one, forming something akin to a scaffolding to obtain the actual nodal values $p_k$ instead of trying to provide a good approximation on the interval (fig. \ref{fig:collocation_comparison}). For a better exposition of the matter we recommend the reader to check \cite{hairer} although in the appendices \ref{sec:app_num_sol} and \ref{sec:app_ord_conds} the reader can find a brief summary.

From the study of order conditions we know that at inner stages the convergence is related to the $C(q)$ simplifying assumption that each method satisfies, which are $C(s)$ and $\widehat{C}(s-2)$, thus coinciding in this case with the former estimates. At nodal points, the convergence is related to the $B(p)$ simplifying assumption (superconvergence) which both satisfy for $p = 2s - 2$.

This means that, in our case, $Q_k^i$ is an approximation of order $s$ to $q(t_k^i)$, whereas $P_k^i$ is an approximation of order $s-2$ to $p(t_k^i)$ and if we try to enforce a nonholonomic constraint using these values we will be asking our solution to lie far away from the corresponding point that the real trajectory would pass through. Thus, the question would be whether we can generate better approximations of $p(t_k^i)$ and whether we can do this cheaply. As it turns out, the answer is yes. We need only to apply the same Lobatto IIIA quadrature rule to the momenta and, better yet, we can reuse the $W_k^i$ values used for the determination of $P_k^i$. As a side effect this provides us with a better continuous approximation of $p$ (order $s-1$, cf. proposition \ref{prop:continuous_approximation_mixed}), although not as good as the continuous approximation of $q$ (see fig.\ref{fig:continuous_from_discontinuous}).

\begin{figure}
\centering
  \includegraphics[width=5.2cm, clip=true, trim=20mm 105mm 40mm 6mm]{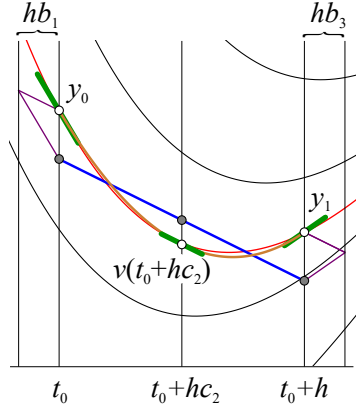}
  \caption{\scriptsize{Continuous collocation polynomial $v(t)$ (in orange) obtained from discontinuous collocation data.}}
  \label{fig:continuous_from_discontinuous}
\end{figure}

Surprisingly the new values we obtain using this method, which we call $p_k^i$, are an approximation of order $s$ to our desired $p(t_k^i)$. This is so because the Lobatto IIIA-B pair satisfies the mixed symplifying assumption $C\widehat{C}(s)$ \cite{Jay1996}.

Intuitively speaking, enforcing the constraints with these, should give better results. These better estimates of $p(t_k^i)$ can be obtained for any other symplectic integrator in holonomic systems, but there they become completely decoupled and can be obtained \emph{a posteriori}.

\section{Lie group integrators}\label{sec:liegroups}
\subsection{Hamilton-Pontryagin formulation and numerical methods in Lie groups}
In section \ref{sec:var_integrators} we used the Hamilton-Pontryagin action as our entry point for high-order variational numerical methods. This will also be the case for Lie groups, where we will follow the approach of \cite{MR2496560}. 

In the continuous case we can work exactly as we did in section \ref{sec:var_integrators} by simply prescribing that our configuration manifold is a Lie group $G$ \cite{CeHaOw,ismu}. The continuous problem becomes more interesting when we consider the possibility of left or right trivialization (we will only consider the first, but computations are analogous). This is a recurring theme in Lie group systems, where the Lagrangian usually is $G$-invariant, i.e.  for $\forall h \in G$ and $(g,v_g) \in TG$, $L(g,v_g) = L(hg, T_g L_{h} v_g)$. App,lying standar reduction theory we  canderive the corresponding Euler-Poincar\'e equations from a reduced Lagrangian defined on ${\mathfrak g}$  \cite{marsden3}.

In any  case, we can define a new trivialized Lagrangian $\ell : G \times {\mathfrak{g}} \to \mathbb{R}$, $\ell(g, \eta) = L(g, T_e L_{g} \eta)$ where $e$ is the neutral element of the group and ${\mathfrak g}=T_eG$. Manipulating the integrand of the Hamilton-Pontryagin action in eq.\eqref{eq:continuous_Hamilton-Pontriagyn_action} we obtain:
\begin{equation}
\ell(g(t),\eta(t)) + \left\langle \mu(t), T_{g(t)} L_{g^{-1}(t)} \dot{g}(t) - \eta(t)\right\rangle
\end{equation}

Therefore, it is natural to consider   the space of curves
\begin{align*}
&C^1 ((g_a,\eta_a,\mu_a),(g_b,\eta_b,\mu_b), [a,b])\\
&= \left\lbrace (g,\eta,\mu): [a, b] \rightarrow G \times \mathfrak{g} \times \mathfrak{g}^* \,|\, g \in C^2([a, b]), \eta, \mu \in C^1([a, b]),\right.\\
&\left.\hspace{0.62cm}(g,\eta,\mu)(a) = (g_a,\eta_a,\mu_a), (g,\eta,\mu)(b) = (g_b,\eta_b,\mu_b) \right\rbrace,
\end{align*}
and define the trivialized functional $\mathcal{J_{HP}}: C^1 ((g_a,\eta_a,\mu_a),(g_b,\eta_b,\mu_b), [a,b]) \to \mathbb{R}$:
\begin{equation}
\mathcal{J_{HP}}(g,\eta,\mu) = \int_0^{h} \left[ \ell(g(t),\eta(t)) + \left\langle \mu(t), T_{g(t)} L_{g^{-1}(t)}\dot{g}(t) - \eta(t)\right\rangle \right] \mathrm{d}t.
\label{eq:trivialized_Hamilton-Pontriagyn_action}
\end{equation}

It can be checked that for the extended curve $(g,\eta,\mu)$ to be a compatible critical point of the action, $\mathrm{d}\mathcal{J_{HP}}(g,\eta,\mu) = 0$, then the following equations must be satisfied:
	\begin{align*}
		\frac{\mathrm{d}\mu(t)}{\mathrm{d}t} &= \mathrm{ad}_{\eta(t)}^* \mu(t) + \left(L_{g(t)}\right)^{*} D_1 \ell(g(t),\eta(t)),\\
		\mu(t) &= D_2 L(g(t),\eta(t)),\\
		\frac{\mathrm{d}g(t)}{\mathrm{d}t} &= T_eL_{g(t)} \eta(t), \quad \forall t \in [0, h].
	\end{align*}
These are the corresponding \emph{Euler-Poincar{\'e}  and Lie-Poisson equations} of the problem, and again the $\mu$ coincide with the trivialized canonical momenta on ${\mathfrak g}^*$. Inserting the second equation into the first we may cast these equations in the more familiar form:
\begin{equation*}
\frac{\mathrm{d}}{\mathrm{d}t} \left(\frac{\partial \ell}{\partial \eta}\right) - \mathrm{ad}_{\eta(t)}^* \left(\frac{\partial \ell}{\partial \eta}\right) = L_{g(t)}^{*} \frac{\partial \ell}{\partial g}
\end{equation*}

If variations are taken without imposing fixed end-point conditions we obtain the boundary terms:
\begin{equation*}
\left.\left\langle \mu(t), \zeta(t) \right\rangle\right\vert_0^h = \left\langle \mu(h), \zeta(h)\right\rangle - \left\langle \mu(0), \zeta(0)\right\rangle
\end{equation*}
where $\zeta(t) = T_{g(t)} L_{g^{-1}(t)} \delta g(t)$.

In order to obtain numerical integrators we need to discretise this action, and in particular we need to know how to properly discretise the kinematic constraint $\dot{g}(t) = v(t) = T_{e} L_{g(t)} \eta(t)$. Again, we will consider RK methods for this but we must be mindful of the manifold structure of the configuration manifold.

The work of \cite{kaas} was one of the firsts to tackle the problem of generalizing RK-type algorithms to more general manifolds and specifically to the Lie group case. In this work the author acknowledges the fact that one needs a vector space structure in order to apply a RK method, as they rely heavily on its  linear structure. In order to address this issue, one can exploit the structure of Lie group of $G$ and its relation with its Lie algebra $\mathfrak{g}$, which is a vector space. In summary, the resulting methods, commonly called Runge-Kutta-Munthe-Kaas (RKMK) methods, move back and forth from the group to the algebra in order to respect the geometry of the group.

To perform these back-and-forth operations it is convenient to introduce the following concept  \cite{Absil}.
\begin{definition} 
A \textbf{retraction} on a manifold $Q$ is a smooth mapping $R: TQ \to Q$ satisfying the following properties. If we denote by $R_q = \left.R\right\vert_{T_q Q} $ the restriction of $R$ to $T_q Q$ then: 
\begin{itemize}
\item $R_q(0_q) = q$, 
\item Identifying $T_{0_q} T_q Q \equiv  T_q Q$ then 
$T_{0_q} R_q = \mathrm{id}_{T_q Q}$.
\end{itemize}
\end{definition}

In the case of Lie groups, we will consider the local diffeomorphism $\tau: \mathfrak{g} \to U_e \subset G$, where $U_e$ is a neighbourhood of the identity element, as retraction maps. The most common instances of these are the exponential map, $\exp$, and the Cayley map, $\mathrm{cay}$ (in the case of quadratic Lie groups).

Assuming that $G$ is connected, we will be able to translate a neighbourhood of any point to  $U_e$ and from $U_e$ to $\mathfrak{g}$ and back thanks to $\tau^{-1}$ and $\tau$. Not only that, but this will be also possible in $\mathbb{T}G=TG\oplus T^*G$, which is what we need for our mechanical problems.

For some $h \in G$, the complete geometric scheme is as follows:
\begin{center}
	\begin{tikzcd}[column sep=huge,row sep=huge]
		\mathbb{T}\mathfrak{g} \arrow[d, "\pi_\mathfrak{g}"]  \arrow[r, shift left=1mm, "\mathbb{T}\tau"] \arrow[r, shift right=1mm, leftarrow, "\mathbb{T}\tau^{-1}"'] & \mathbb{T}U_e \arrow[d, "\pi_U"] \arrow[r, shift left=1mm, "\mathbb{T}L_{h}"] \arrow[r, shift right=1mm, leftarrow, "\mathbb{T}L_{h}^{-1} = \mathbb{T}L_{h^{-1}}"'] & \mathbb{T}G \arrow[d, "\pi_G"]\\
		\mathfrak{g} \arrow[r, shift left=1mm, "\tau"] \arrow[r, shift right=1mm, leftarrow, "\tau^{-1}"'] & U_e \arrow[r, shift left=1mm, "L_{h}"] \arrow[r, shift right=1mm, leftarrow, "L_{h}^{-1} = L_{h^{-1}}"'] & G
	\end{tikzcd}
\end{center}
with 
\begin{eqnarray*}
	\mathbb{T}\tau(\xi, \eta_\xi, \mu_{\xi})&=&(\tau(\xi), T_{\xi}\tau(\xi),(T_{\tau(\xi)}\tau^{-1})^* \mu_{\xi})
	\end{eqnarray*}
where $\xi \in  \mathfrak{g}$, $\eta_\xi \in T_\xi \mathfrak{g} \cong \mathfrak{g}$ and $\mu_{\xi}\in T^*_\xi \mathfrak{g} \cong \mathfrak{g}^*$  and similar definitions for the other maps.

Assume we work with adapted coordinates $(g, v_g, p_g) \in \mathbb{T}G$ and $(\xi, \eta_{\xi}, \mu_{\xi}) \in \mathbb{T} \mathfrak{g}$. According to this diagram, if $h$ is such that $L_{h^{-1}} g \in U_e$, we find the following correspondences:
\begin{align*}
&\mathbb{T}_{L_{h^{-1}} g} \tau^{-1} \mathbb{T}_{g}L_{h^{-1}}(g,v_g,p_g)\\
&\quad = \left( \tau^{-1}\left(L_{h^{-1}} g\right), \mathrm{d}^L \tau^{-1}_{\tau^{-1} (L_{h^{-1}} g)} T_{g} L_{g^{-1}} v_g,  \left(\mathrm{d}^L \tau_{\tau^{-1}(L_{h^{-1}} g)}\right)^* \left(T_{e} L_{g}\right)^* p_g\right)\\
&\quad = \left( \xi, \eta_{\xi}, \mu_{\xi} \right)\\
&\mathbb{T}_{\tau(\xi)}L_{h} \mathbb{T}_{\xi} \tau (\xi,\eta_{\xi}, \mu_{\xi})\\
&\quad = \left( L_h \tau(\xi), T_e L_{L_h \tau(\xi)} \mathrm{d}^L \tau_{\xi} \eta_{\xi}, \left(T_{L_h \tau(\xi)} L_{\left(L_h \tau(\xi)\right)^{-1}}\right)^* \left(\mathrm{d}^L \tau^{-1}_{\xi}\right)^* \mu_{\xi} \right)\\
&\quad = \left( g, v_g, p_g \right)
\end{align*}
where $\mathrm{d}^{L}\tau: \mathfrak{g} \times \mathfrak{g} \to \mathfrak{g}$ is the left-trivialised tangent to $\tau$ defined by the relation $(T_\xi \tau) v_{\xi} = T_e L_{\tau(\xi)} \left(\mathrm{d}^L \tau_{\xi} v_{\xi}\right)$, for $(\xi, v_{\xi}) \in T\mathfrak{g}$, and $\mathrm{d}^{L}\tau^{-1}$ is its inverse, defined by $(T_g \tau^{-1}) v_{g} = \mathrm{d}^L \tau_{\tau^{-1}({g})}^{-1} \left(T_g L_{g^{-1}} v_{g}\right)$ \cite{MR2496560,BoMa}.

Let us also take this opportunity to define $\mathrm{dd}^{L}\tau: \mathfrak{g} \times \mathfrak{g} \times \mathfrak{g} \to \mathfrak{g}$ the second left-trivialised tangent, which will be necessary for later derivations. This is a linear map in the second and third variables such that $\partial_{\xi_{k}^i}\left(\mathrm{d}^{L}\tau_{\xi_k^i}\eta_k^i\right) \delta{\xi_k^i} = \mathrm{d}^{L}\tau_{\xi_k^i} \mathrm{dd}^{L}\tau_{\xi_k^i}(\eta_k^i, \delta\xi_k^i)$. It appears naturally when representing elements $(g, v_g, a_{v_g}) \in T^{(2)}G$, the second order tangent bundle of $G$ (see \cite{ManuelCampos}), with elements of $(\xi, \eta_  {\xi}, \zeta_{\eta_{\xi}}) \in T^{(2)}\mathfrak{g}$, which using matrix notation becomes:
\begin{equation*}
\left(e, g^{-1} v_g, g^{-1} a_{v_g} - g^{-1} v_g g^{-1} v_g \right) \mapsto \left(0, \mathrm{d}^L\tau_{\xi}\eta_{\xi}, \mathrm{d}^L\tau_{\xi}\left[\zeta_{\eta_{\xi}} +  \mathrm{dd}^L\tau_{\xi}\left(\eta_{\xi},\eta_{\xi}\right)\right] \right)
\end{equation*}

With this, RKMK methods for our kinematic constraint can be obtained (see appendix \ref{sec:app_lie_group_collocation}). Using matrix notation, our continuous constraint becomes $\dot{g}(t) = g(t) \eta(t)$, and its discrete version can be written as:
\begin{align*}
\tau^{-1}((g_k)^{-1} G_k^i) &= h \sum_{j=1}^s a_{i j} \mathrm{d}^{L}\tau^{-1}_{\tau^{-1}((g_k)^{-1} G_k^j)} (G_k^j)^{-1} V_k^j,\\
\tau^{-1}((g_k)^{-1} g_{k+1}) &= h \sum_{j=1}^s b_{j} \mathrm{d}^{L}\tau^{-1}_{\tau^{-1}((g_k)^{-1} G_k^j)} (G_k^j)^{-1} V_k^j.
\end{align*}
which, if $V_k^i = G_k^i \mathrm{d}^{L}\tau_{\tau^{-1}((g_k)^{-1} G_k^i)} \mathrm{H}_k^i$ (where $\mathrm{H}$ corresponds to the variable $\eta$), reduces to
\begin{align*}
\tau^{-1}((g_k)^{-1} G_k^i) &= h \sum_{j=1}^s a_{i j} \mathrm{H}_k^j,\\
\tau^{-1}((g_k)^{-1} g_{k+1}) &= h \sum_{j=1}^s b_{j} \mathrm{H}_k^j.
\end{align*}

\subsection{Variational Lie group integrators}
With the above expressions, the discrete Hamilton-Pontryagin equation \eqref{eq:discrete_Hamilton-Pontriagyn_action} on a Lie group transforms into:
\begin{align*}
\left(\mathcal{J_{HP}}\right)_d &= \sum_{k = 0}^{N-1} \sum_{i = 1}^s h b_i \left[ \vphantom{\sum_{i = 1}^s} L\left(G_k^i, V_k^i\right)\right.\\
&+ \left\langle \tilde{P}_k^i, \frac{1}{h} G_k^i \tau^{-1}((g_k)^{-1} G_k^i) - G_k^i \sum_{j=1}^s a_{i j} \mathrm{d}^{L}\tau^{-1}_{\tau^{-1}((g_k)^{-1} G_k^j)} (G_k^j)^{-1} V_k^j\right\rangle\\
&+ \left.\left\langle \tilde{p}_{k+1}, \frac{1}{h} g_{k+1} \tau^{-1}((g_k)^{-1} g_{k+1}) - g_{k+1} \sum_{j=1}^s b_{j} \mathrm{d}^{L}\tau^{-1}_{\tau^{-1}((g_k)^{-1} G_k^j)} (G_k^j)^{-1} V_k^j\right\rangle \right]
\end{align*}
where $\tilde{P}_k^i \in T_{G_k^i}^{*} G$ and $\tilde{p}_{k+1} \in T_{g_{k+1}}^{*} G$. We can shorten this expression to:
\begin{align*}
\left(\mathcal{J_{HP}}\right)_d &= \sum_{k = 0}^{N-1} \sum_{i = 1}^s h b_i \left[ \vphantom{\sum_{i = 1}^s} L\left(G_k^i, V_k^i\right)\right.\\
&+ \left\langle \widetilde{\mathrm{M}}_k^i, \frac{1}{h} \tau^{-1}((g_k)^{-1} G_k^i) - \sum_{j=1}^s a_{i j} \mathrm{d}^{L}\tau^{-1}_{\tau^{-1}((g_k)^{-1} G_k^j)} (G_k^j)^{-1} V_k^j\right\rangle\\
&+ \left.\left\langle \tilde{\mu}_{k+1}, \frac{1}{h} \tau^{-1}((g_k)^{-1} g_{k+1}) - \sum_{j=1}^s b_{j} \mathrm{d}^{L}\tau^{-1}_{\tau^{-1}((g_k)^{-1} G_k^j)} (G_k^j)^{-1} V_k^j\right\rangle \right]
\end{align*}
where $\tilde{\mu}_k, \widetilde{\mathrm{M}}_k^i \in \mathfrak{g}^*$.

Using elements $(\Xi_{k}^i, \mathrm{H}_{k}^i) \in T\mathfrak{g}$ as representatives of $(G_k^i, V_k^j) \in TG$, we find:
\begin{align*}
\left(\mathcal{J_{HP}}\right)_d &= \sum_{k = 0}^{N-1} \sum_{i = 1}^s h b_i \left[ \vphantom{\sum_{i = 1}^s} L\left(g_k \tau(\Xi_k^i), g_k \tau(\Xi_k^i) \mathrm{d}^{L}\tau_{\Xi_k^i} \mathrm{H}_k^i\right)\right.\\
&+ \left.\left\langle \widetilde{\mathrm{M}}_k^i, \frac{1}{h} \Xi_{k}^i - \sum_{j=1}^s a_{i j} \mathrm{H}_k^j\right\rangle + \left\langle \tilde{\mu}_{k+1}, \frac{1}{h} \tau^{-1}((g_k)^{-1} g_{k+1}) - \sum_{j=1}^s b_{j} \mathrm{H}_k^j\right\rangle \right]
\end{align*}

Expressing the Lagrangian $L: TG\rightarrow \mathbb{R}$ as $\ell: G \times \mathfrak{g}\rightarrow \mathbb{R}$ using left translation, this expression simplifies to
\begin{align}
\left(\mathcal{J_{HP}}\right)_d &= \sum_{k = 0}^{N-1} \sum_{i = 1}^s h b_i \left[ \vphantom{\sum_{i = 1}^s} \ell\left(g_k \tau(\Xi_k^i), \mathrm{d}^{L}\tau_{\Xi_k^i} \mathrm{H}_k^i\right)\right.\label{eq:Lie_discrete_Hamilton-Pontriagyn_action}\\
&+ \left.\left\langle \widetilde{\mathrm{M}}_k^i, \frac{1}{h} \Xi_{k}^i - \sum_{j=1}^s a_{i j} \mathrm{H}_k^j\right\rangle + \left\langle \tilde{\mu}_{k+1}, \frac{1}{h} \tau^{-1}((g_k)^{-1} g_{k+1}) - \sum_{j=1}^s b_{j} \mathrm{H}_k^j\right\rangle \right].\nonumber
\end{align}
which is the discrete equivalent of \eqref{eq:trivialized_Hamilton-Pontriagyn_action} and the one we will work with from here on.

Similarly as we did in the vector space case, let us consider the space of 
$s$-stage \emph{ variationally partitioned Runge-Kutta-Munthe-Kaas} sequences (
\emph{$s$-stage VPRKMK sequences}):
\begin{align*}
&C_d^s (g_a, g_b)\\
&= \left\lbrace \left(g,\tilde{\mu},\left\lbrace \Xi^i, \mathrm{H}^i, \widetilde{\mathrm{M}}^i\right\rbrace_{i = 1}^s\right): \left\lbrace t_k\right\rbrace_{k=0}^N \rightarrow G \times \mathfrak{g}^* \times \left(\mathbb{T}\mathfrak{g}\right)^s \,|\, g(a) = g_a, g(b) = g_b\right\rbrace.
\end{align*}
Note that $\mathbb{T}\mathfrak{g} \cong \mathfrak{g} \times \mathfrak{g} \times \mathfrak{g}^*$.

Now we are in a position to state the Lie group analogue of theorem \ref{thm:VPRK_methods}. This will be a more general version of \cite{MR2496560}, theorem 4.9, which is order 2-bound. In fact ours is essentially equivalent to what is stated in \cite{BoMa}.

\begin{theorem}\label{thm:VPRKMK_methods}
Let $\ell : G \times \mathfrak{g} \to \mathbb{R}$ be a $C^l$ function with $l \geq 2$ and an $s$-stage VPRKMK sequence $c_d \in C_d^s (g_0, g_N)$. Then $c_d$ is a critical point of the discrete Hamilton-Pontryagin functional, $\left(\mathcal{J_{HP}}\right)_d$, if and only if for all $k = 0, ..., N-1$ and $i = 1, ..., s$ it satisfies
%\begin{subequations}
%\label{eq:Lie_symplectic_partitioned_integrator}
%\begin{alignat}{2}
%\end{alignat}
%\end{subequations}
\begin{align}
\Xi_k^i &= \tau^{-1}\left(g_k^{-1} G_k^i\right) = h \sum_{j = 1}^s a_{i j} \mathrm{H}_k^j,
\label{eq:inner_xi_equations}\\
\xi_{k,k+1} &= \tau^{-1}\left(g_k^{-1} g_{k+1}\right) = h \sum_{j = 1}^s b_{j} \mathrm{H}_k^j,
\label{eq:outer_xi_equations}\\
\mathrm{M}_k^i &= \mathrm{Ad}_{\tau(\xi_{k,k+1})}^* \left[ \mu_{k} + h \sum_{j = 1}^s b_j \left( \mathrm{d}^L\tau^{-1}_{-\Xi_k^j} - \frac{a_{j i}}{b_i} \mathrm{d}^{L}\tau^{-1}_{-\xi_{k,k+1}}\right)^* \mathrm{N}_k^j\right],
\label{eq:inner_mu_equations}\\
\mu_{k+1} &= \mathrm{Ad}_{\tau(\xi_{k,k+1})}^* \left[ \mu_{k} + h \sum_{j = 1}^s b_j \left(\mathrm{d}^L\tau^{-1}_{-\Xi_k^j}\right)^* \mathrm{N}_k^j\right];
\label{eq:outer_mu_equations}
\end{align}
where
\begin{align*}
\mathrm{N}_k^i &= \left(\mathrm{d}^L\tau_{\Xi_k^i}\right)^* L_{g_k \tau(\Xi_k^i)}^* D_1 \ell\left(g_k \tau(\Xi_k^i), \mathrm{d}^{L}\tau_{\Xi_k^i} \mathrm{H}_k^i\right),\\
\mathrm{M}_k^i &= \left(\mathrm{d}^{L}\tau^{-1}_{\xi_{k,k+1}}\right)^*\left[\mathrm{\Pi}_k^i + h \sum_{j = 1}^s \frac{b_j a_{j i}}{b_i} \left( \mathrm{dd}^{L}\tau_{\Xi_k^j} \right)^* \left( \mathrm{H}_k^j, \mathrm{\Pi}_k^j\right)\right],\\
\mathrm{\Pi}_k^i &= \left(\mathrm{d}^{L}\tau_{\Xi_k^i}\right)^* D_2 \ell\left(g_k \tau(\Xi_k^i), \mathrm{d}^{L}\tau_{\Xi_k^i} \mathrm{H}_k^i\right),\\
\mu_k &= \left(\mathrm{d}^{L}\tau^{-1}_{\xi_{k-1,k}}\right)^* \tilde{\mu}_k.
\end{align*}
\end{theorem}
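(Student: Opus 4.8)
The plan is to mirror the proof of Theorem \ref{thm:VPRK_methods}, computing $\mathrm{d}\left(\mathcal{J_{HP}}\right)_d$ directly from the trivialized discrete action \eqref{eq:Lie_discrete_Hamilton-Pontriagyn_action} and reading off the stationarity conditions variable by variable. The free variations are $\delta g_k$, $\delta\tilde\mu_{k+1}$, and, for each node $k$ and stage $i$, the algebra variations $\delta\Xi_k^i$, $\delta\mathrm{H}_k^i$ together with the dual variation $\delta\widetilde{\mathrm{M}}_k^i$. Two of these are immediate: $\delta\widetilde{\mathrm{M}}_k^i$ reproduces the inner kinematic constraint and hence \eqref{eq:inner_xi_equations}, while $\delta\tilde\mu_{k+1}$ reproduces the outer one and hence \eqref{eq:outer_xi_equations}. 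The remaining three families must then be repackaged into the trivialized force $\mathrm{N}_k^i$, the trivialized momentum $\mathrm{\Pi}_k^i$ and the nodal momentum $\mu_k$, after which the update relations \eqref{eq:inner_mu_equations}--\eqref{eq:outer_mu_equations} should appear. As in the vector-space proof, the converse direction is free, since every manipulation is an equivalence.

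Because both the configuration $g_k\tau(\Xi_k^i)$ and the velocity $\mathrm{d}^L\tau_{\Xi_k^i}\mathrm{H}_k^i$ depend on $\Xi_k^i$, I would first record the two differentiation rules $\delta(g_k\tau(\Xi_k^i)) = T_e L_{g_k\tau(\Xi_k^i)}(\mathrm{d}^L\tau_{\Xi_k^i}\,\delta\Xi_k^i)$ and $\delta(\mathrm{d}^L\tau_{\Xi_k^i}\mathrm{H}_k^i) = \mathrm{d}^L\tau_{\Xi_k^i}\,\mathrm{dd}^L\tau_{\Xi_k^i}(\mathrm{H}_k^i,\delta\Xi_k^i) + \mathrm{d}^L\tau_{\Xi_k^i}\,\delta\mathrm{H}_k^i$, the second being exactly the defining property of $\mathrm{dd}^L\tau$. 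Inserting these into the $\ell$-term and dualizing yields precisely $\mathrm{N}_k^i = (\mathrm{d}^L\tau_{\Xi_k^i})^* L_{g_k\tau(\Xi_k^i)}^* D_1\ell$ from the $D_1\ell$ contribution and $\mathrm{\Pi}_k^i = (\mathrm{d}^L\tau_{\Xi_k^i})^* D_2\ell$ from the $D_2\ell$ contribution. The $\delta\mathrm{H}_k^i$ equation then reads $b_i\,\mathrm{\Pi}_k^i = \sum_j b_j a_{ji}\,\widetilde{\mathrm{M}}_k^j + b_i\,\tilde\mu_{k+1}$ (using $\sum_j b_j = 1$), while the $\delta\Xi_k^i$ equation gives $\widetilde{\mathrm{M}}_k^i = -h\,\mathrm{N}_k^i - h\,(\mathrm{dd}^L\tau_{\Xi_k^i})^*(\mathrm{H}_k^i,\mathrm{\Pi}_k^i)$. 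Eliminating $\widetilde{\mathrm{M}}_k^i$ and collecting the resulting $\mathrm{dd}^L\tau$ terms, with the conversion $\mu_k = (\mathrm{d}^L\tau^{-1}_{\xi_{k-1,k}})^*\tilde\mu_k$ fixed so that the pairings match the boundary term of the continuous action, is what produces the stated definition of $\mathrm{M}_k^i$ and, through it, the internal relation \eqref{eq:inner_mu_equations}.

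The $\delta g_k$ variation is the one coupling neighbouring intervals, since $g_k$ enters $\xi_{k-1,k}$, $\xi_{k,k+1}$ and every stage position $g_k\tau(\Xi_k^i)$. Writing $\delta g_k = T_e L_{g_k}\zeta_k$ and using the trivialized variation of $\xi_{k,k+1}=\tau^{-1}(g_k^{-1}g_{k+1})$, whose dependence on the left endpoint $g_k$ is governed by the identity relating $\mathrm{d}^L\tau^{-1}_{-\xi_{k,k+1}}$ with $\mathrm{Ad}_{\tau(\xi_{k,k+1})}\,\mathrm{d}^L\tau^{-1}_{\xi_{k,k+1}}$, a discrete summation by parts over $k$ yields the nodal momentum balance. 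Dualizing the adjoint factor introduces the $\mathrm{Ad}^*_{\tau(\xi_{k,k+1})}$ appearing in \eqref{eq:inner_mu_equations}--\eqref{eq:outer_mu_equations}, and identifying $\mu_k$ as above turns the balance into the outer update \eqref{eq:outer_mu_equations}; feeding back the internal-stage relation gives \eqref{eq:inner_mu_equations}. Throughout, the symplectic-conjugate coefficients are removed exactly as in Theorem \ref{thm:VPRK_methods}, using $\hat b_i = b_i$ and $b_i\hat a_{ij}+\hat b_j a_{ji}=b_i\hat b_j$, so that the combinations $1 - a_{ji}/b_i$ are expressed through $(a,b)$ alone.

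The hard part will be the faithful bookkeeping of two non-commutative effects absent in the vector-space setting. First, since position and velocity both depend on $\Xi_k^i$, the $\delta\Xi_k^i$ variation generates the second-order $\mathrm{dd}^L\tau$ terms, and one must verify that these assemble with $\mathrm{\Pi}_k^i$ into exactly the bracket defining $\mathrm{M}_k^i$, including the weights $b_j a_{ji}/b_i$ and the outer pullback $(\mathrm{d}^L\tau^{-1}_{\xi_{k,k+1}})^*$. Second, transporting momenta between the basepoints $g_k$ and $g_{k+1}$ must reproduce the $\mathrm{Ad}^*_{\tau(\xi_{k,k+1})}$ factors and the $-\Xi_k^j$, $-\xi_{k,k+1}$ arguments of $\mathrm{d}^L\tau^{-1}$ with the correct signs; I would control these using the trivialized variation identities for $\tau$ and $\tau^{-1}$ from \cite{MR2496560,BoMa}. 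Once both are checked, the collected stationarity conditions coincide termwise with \eqref{eq:inner_xi_equations}--\eqref{eq:outer_mu_equations}.
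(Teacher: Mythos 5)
Your proposal follows essentially the same route as the paper's proof: compute the five families of variations of the trivialized discrete action, read off the kinematic constraints from $\delta\widetilde{\mathrm{M}}$ and $\delta\tilde\mu$, obtain $\widetilde{\mathrm{M}}_k^i = -h\bigl[\mathrm{N}_k^i + (\mathrm{dd}^L\tau_{\Xi_k^i})^*(\mathrm{H}_k^i,\Pi_k^i)\bigr]$ from $\delta\Xi$, the nodal momentum update from a discrete summation by parts in $\delta g$ (with $\mu_k$ identified via the boundary terms and the $\mathrm{Ad}^*$ factors coming from the identity $\mathrm{d}^L\tau^{-1}_{\Xi}\,\mathrm{Ad}^{-1}_{\tau(\Xi)}=\mathrm{d}^L\tau^{-1}_{-\Xi}$), and then eliminate $\widetilde{\mathrm{M}}_k^i$ in the $\delta\mathrm{H}$ equation to produce \eqref{eq:inner_mu_equations}. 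The plan, the key identities, and the elimination order all match the paper; no gaps.
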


\begin{proof}
We know that $c_d$ is a critical point of the discrete Hamilton-Pontryagin functional if and only if $\mathrm{d}\left(\mathcal{J_{HP}}\right)_d(c_d) (\delta c_d) = 0$, $\forall \delta c_d \in T_{c_d} C_d^s (g_0, g_N)$. Let us write $\delta c_d = \left(\delta g, \delta \tilde{\mu}, \left\lbrace \delta \Xi^i, \delta \mathrm{H}^i, \delta \widetilde{\mathrm{M}}^i\right\rbrace_{i = 1}^s\right)$ and compute each of the individual variations separated from each other:
\begin{align*}
\delta g &: \sum_{k = 0}^{N-1} \sum_{i = 1}^s h b_i \left[ \vphantom{\frac{1}{h}} \left\langle D_1 \ell\left(g_k \tau(\Xi_k^i), \mathrm{d}^{L}\tau_{\Xi_k^i} \mathrm{H}_k^i\right), \delta g_k \tau(\Xi_k^i) \right\rangle \right.\\
&+ \left.\left\langle \tilde{\mu}_{k+1}, \frac{1}{h} D\tau^{-1}((g_k)^{-1} g_{k+1}) (- (g_k)^{-1} \delta g_k (g_k)^{-1} g_{k+1} + (g_k)^{-1} \delta g_{k+1}) \right\rangle \right]
\end{align*}

As it is customary we will define new variations $\zeta_k = (g_k)^{-1} \delta g_{k} \in \mathfrak{g}$ and use these to rewrite this equation, together with the trivialised tangents and the short-hand $\xi_{k,k+1} = \tau^{-1}((g_k)^{-1} g_{k+1})$:
\begin{align*}
\delta g &: \sum_{k = 0}^{N-1} \sum_{i = 1}^s h b_i \left[ \vphantom{\frac{1}{h}} \left\langle D_1 \ell\left(g_k \tau(\Xi_k^i), \mathrm{d}^{L}\tau_{\Xi_k^i} \mathrm{H}_k^i\right), g_k \tau(\Xi_k^i) \tau(-\Xi_k^i)\zeta_k \tau(\Xi_k^i)\right\rangle \right.\nonumber\\
&+ \left.\left\langle \tilde{\mu}_{k+1}, \frac{1}{h} \mathrm{d}^{L}\tau_{\xi_{k,k+1}}^{-1} (- \mathrm{Ad}^{-1}_{\tau\left(\xi_{k,k+1}\right)}\zeta_k + \zeta_{k+1}) \right\rangle \right]\\
&= \sum_{k = 0}^{N-1} \left[ \left\langle h \sum_{i = 1}^s b_i \left(\mathrm{Ad}^{-1}_{\tau(\Xi_k^i)}\right)^* L_{g_k \tau(\Xi_k^i)}^* D_1 \ell\left(g_k \tau(\Xi_k^i), \mathrm{d}^{L}\tau_{\Xi_k^i} \mathrm{H}_k^i\right), \zeta_k \right\rangle\right.\nonumber\\
&- \left\langle \left(\mathrm{Ad}^{-1}_{\tau\left(\xi_{k,k+1}\right)}\right)^*\left(\mathrm{d}^{L}\tau_{\xi_{k,k+1}}^{-1}\right)^*\tilde{\mu}_{k+1}, \zeta_k \right\rangle\nonumber\\
&+ \left.\vphantom{\sum_{i = 1}^s}\left\langle \left(\mathrm{d}^{L}\tau_{\xi_{k,k+1}}^{-1}\right)^*\tilde{\mu}_{k+1}, \zeta_{k+1} \right\rangle \right]
\end{align*}
where we have used the order one condition. If we rearrange the sum so that terms with the same $\zeta_k$ appear together (the discrete analogue of integration by parts) we are left with:
\begin{align*}
\delta g &: \sum_{k = 1}^{N-1} \left[ \left\langle h \sum_{i = 1}^s b_i \left(\mathrm{Ad}^{-1}_{\tau(\Xi_k^i)}\right)^* L_{g_k \tau(\Xi_k^i)}^* D_1 \ell\left(g_k \tau(\Xi_k^i), \mathrm{d}^{L}\tau_{\Xi_k^i} \mathrm{H}_k^i\right) \right.\right.\\
&- \left.\left. \vphantom{\sum_{i = 1}^s} \left(\mathrm{Ad}^{-1}_{\tau\left(\xi_{k,k+1}\right)}\right)^*\left(\mathrm{d}^{L}\tau_{\xi_{k,k+1}}^{-1}\right)^* \tilde{\mu}_{k+1} + \left(\mathrm{d}^{L}\tau_{\xi_{k-1,k}}^{-1}\right)^*\tilde{\mu}_{k}, \zeta_{k} \right\rangle \right]\\
&+ \left\langle h \sum_{i = 1}^s b_i \left(\mathrm{Ad}^{-1}_{\tau(\Xi_k^i)}\right)^* L_{g_k \tau(\Xi_k^i)}^* D_1 \ell\left(g_0 \tau(\Xi_0^i), \mathrm{d}^{L}\tau_{\Xi_0^i} \mathrm{H}_0^i\right) \right.\\
&- \left.\vphantom{\sum_{i = 1}^s} \left(\mathrm{Ad}^{-1}_{\tau\left(\Xi_{0,1}\right)}\right)^*\left(\mathrm{d}^{L}\tau_{\Xi_{0,1}}^{-1}\right)^*\tilde{\mu}_{1}, \zeta_0\right\rangle + \left\langle \left(\mathrm{d}^{L}\tau_{\Xi_{N-1,N}}^{-1}\right)^*\tilde{\mu}_{N}, \zeta_N\right\rangle
\end{align*}

Identification of the boundary terms with their counterparts on the continuous realm suggests the change $\left(\mathrm{d}^{L}\tau_{\xi_{k-1,k}}^{-1}\right)^* \tilde{\mu}_{k} = \mu_k$.

%where we have also used the fact that $\mathrm{d}^{L}\tau_{\Xi}^{-1} \mathrm{Ad}^{-1}_{\tau(\Xi)} = \mathrm{d}^{L}\tau_{-\Xi}^{-1}$.

Moving on to a different variation:
\begin{align*}
\delta \Xi &: \sum_{k = 0}^{N-1} \sum_{i = 1}^s h b_i \left[ \vphantom{\frac{1}{h}} \left\langle D_1 \ell\left(g_k \tau(\Xi_k^i), \mathrm{d}^{L}\tau_{\Xi_k^i} \mathrm{H}_k^i\right), g_k D\tau(\Xi_k^i) \delta \Xi_{k}^i \right\rangle \right.\\
&+ \left\langle D_2 \ell\left(g_k \tau(\Xi_k^i), \mathrm{d}^{L}\tau_{\Xi_k^i} \mathrm{H}_k^i\right), \partial_{\Xi_{k}^i}\left(\mathrm{d}^{L}\tau_{\Xi_k^i}\mathrm{H}_k^i\right) \delta{\Xi_k^i} \right\rangle + \left.\left\langle \frac{1}{h} \widetilde{\mathrm{M}}_k^i, \delta \Xi_{k}^i\right\rangle \right]
\end{align*}

Using the definition of $\mathrm{dd}^{L}\tau$ we can rewrite this as:
\begin{align*}
\delta \Xi &: \sum_{k = 0}^{N-1} \sum_{i = 1}^s h b_i \left[ \vphantom{\frac{1}{h}} \left\langle D_1 \ell\left(g_k \tau(\Xi_k^i), \mathrm{d}^{L}\tau_{\Xi_k^i} \mathrm{H}_k^i\right), g_k \tau(\Xi_k^i) \mathrm{d}^L\tau_{\Xi_k^i} \delta \Xi_{k}^i \right\rangle\right.\\
&+ \left\langle D_2 \ell\left(g_k \tau(\Xi_k^i), \mathrm{d}^{L}\tau_{\Xi_k^i} \mathrm{H}_k^i\right), \mathrm{d}^{L}\tau_{\Xi_k^i} \mathrm{dd}^{L}\tau_{\Xi_k^i}( \mathrm{H}_k^i, \delta{\Xi_k^i})\right\rangle + \left.\left\langle \widetilde{\mathrm{M}}_k^i, \frac{1}{h} \delta \Xi_{k}^i\right\rangle \right]\\
&= \sum_{k = 0}^{N-1} \sum_{i = 1}^s h b_i \left[ \vphantom{\frac{1}{h}} \left\langle \left(\mathrm{d}^L\tau_{\Xi_k^i}\right)^* L_{g_k \tau(\Xi_k^i)}^* D_1 \ell\left(g_k \tau(\Xi_k^i), \mathrm{d}^{L}\tau_{\Xi_k^i} \mathrm{H}_k^i\right), \delta \Xi_{k}^i \right\rangle\right.\\
&+ \left.\left\langle \left( \mathrm{dd}^{L}\tau_{\Xi_k^i}\right)^* \left( \mathrm{H}_k^i, \left(\mathrm{d}^{L}\tau_{\Xi_k^i}\right)^* D_2 \ell\left(g_k \tau(\Xi_k^i), \mathrm{d}^{L}\tau_{\Xi_k^i} \mathrm{H}_k^i\right)\right) + \frac{1}{h} \widetilde{\mathrm{M}}_k^i, \delta \Xi_{k}^i\right\rangle \right].
\end{align*}

It is now the turn of variations with respect to $\delta \mathrm{H}$:
\begin{align*}
\delta \mathrm{H} &: \sum_{k = 0}^{N-1} \sum_{i = 1}^s h b_i \left[ \vphantom{\sum_{i = 1}^s} \left\langle D_2 \ell\left(g_k \tau(\Xi_k^i), \mathrm{d}^{L}\tau_{\Xi_k^i} \mathrm{H}_k^i\right), \mathrm{d}^{L}\tau_{\Xi_k^i} \delta \mathrm{H}_k^i \right\rangle \right.\\
&- \left.\left\langle \widetilde{\mathrm{M}}_k^i, \sum_{j=1}^s a_{i j} \delta \mathrm{H}_k^j\right\rangle - \left\langle \tilde{\mu}_{k+1}, \sum_{j=1}^s b_{j} \delta \mathrm{H}_k^j\right\rangle \right]\\
&= \sum_{k = 0}^{N-1} \sum_{i = 1}^s h \left[ \vphantom{\sum_{i = 1}^s} \left\langle b_i \left(\mathrm{d}^{L}\tau_{\Xi_k^i}\right)^* D_2 \ell\left(g_k \tau(\Xi_k^i), \mathrm{d}^{L}\tau_{\Xi_k^i} \mathrm{H}_k^i\right), \delta \mathrm{H}_k^i \right\rangle \right.\\
&- \left.\left\langle \sum_{j=1}^s b_{j} a_{j i} \widetilde{\mathrm{M}}_k^j, \delta \mathrm{H}_k^i\right\rangle - \left\langle b_i \tilde{\mu}_{k+1}, \delta \mathrm{H}_k^i\right\rangle \right].
\end{align*}
where we have used again the \ref{eq:order_1_cond} condition and we have also rearranged summation indices.

The last two variations are the easiest ones, as they are nothing but the RKMK constraints:
\begin{align*}
\delta \widetilde{\mathrm{M}}, \delta \tilde{\mu} &: \sum_{k = 0}^{N-1} \sum_{i = 1}^s h b_i \left[ \left\langle \delta \widetilde{\mathrm{M}}_k^i, \frac{1}{h} \Xi_{k}^i - \sum_{j=1}^s a_{i j} \mathrm{H}_k^j\right\rangle \right.\\
&+ \left.\left\langle \delta \tilde{\mu}_{k+1}, \frac{1}{h} \tau^{-1}((g_k)^{-1} g_{k+1}) - \sum_{j=1}^s b_{j} \mathrm{H}_k^j\right\rangle \right].
\end{align*}

After imposing fixed-end variations we are left with the following set of equations for $k = 1, ..., N-1$ and $i = 1, ..., s$:
\begin{equation}
\mu_{k+1} = \mathrm{Ad}^*_{\tau(\xi_{k,k+1})} \left[\mu_{k} + h \sum_{i = 1}^s b_i \left(\mathrm{Ad}^{-1}_{\tau(\Xi_k^i)}\right)^* L_{g_k \tau(\Xi_k^i)}^* D_1 \ell\left(g_k \tau(\Xi_k^i), \mathrm{d}^{L}\tau_{\Xi_k^i} \mathrm{H}_k^i\right)\right],\label{eq:delta_g_var}
\end{equation}
\begin{align}
\widetilde{\mathrm{M}}_k^i &= - h \left[ \left(\mathrm{d}^L\tau_{\Xi_k^i}\right)^* L_{g_k \tau(\Xi_k^i)}^* D_1 \ell\left(g_k \tau(\Xi_k^i), \mathrm{d}^{L}\tau_{\Xi_k^i} \mathrm{H}_k^i\right) \right.
\label{eq:delta_xi_var}\\
&+ \left.\left( \mathrm{dd}^{L}\tau_{\Xi_k^i} \right)^* \left( \mathrm{H}_k^i, \left(\mathrm{d}^{L}\tau_{\Xi_k^i}\right)^* D_2 \ell\left(g_k \tau(\Xi_k^i), \mathrm{d}^{L}\tau_{\Xi_k^i} \mathrm{H}_k^i \right)\right)\right],
\nonumber
\end{align}
\begin{align}
\left(\mathrm{d}^{L}\tau_{\Xi_k^i}\right)^* D_2 \ell\left(g_k \tau(\Xi_k^i), \mathrm{d}^{L}\tau_{\Xi_k^i} \mathrm{H}_k^i\right) - \sum_{j=1}^s \frac{b_j a_{j i}}{b_i} \widetilde{\mathrm{M}}_k^j - \left(\mathrm{d}^{L}\tau_{\xi_{k,k+1}}\right)^* \mu_{k+1} = 0,
\label{eq:delta_eta_var}
\end{align}
\begin{equation}
\Xi_k^i = h \sum_{j = 1}^s a_{i j} \mathrm{H}_k^j,
\label{eq:delta_mu_1_var}
\end{equation}
\begin{equation}
\xi_{k,k+1} = h \sum_{j = 1}^s b_{j} \mathrm{H}_k^j.
\label{eq:delta_mu_2_var}
\end{equation}

Using some of the shorthand variables defined in the statement of the theorem we may rewrite eqs. \eqref{eq:delta_g_var} and \eqref{eq:delta_xi_var} as
\begin{align*}
\mu_{k+1} &= \mathrm{Ad}_{\tau(\xi_{k,k+1})}^* \left[ \mu_{k} + h \sum_{j = 1}^s b_j \left(\mathrm{d}^L\tau^{-1}_{-\Xi_k^j}\right)^* \mathrm{N}_k^j\right],\\
\widetilde{\mathrm{M}}_k^i &= - h \left[ \mathrm{N}_k^i + \left( \mathrm{dd}^{L}\tau_{\Xi_k^i} \right)^* \left( \mathrm{H}_k^i, \Pi_k^i \right)\right],
\end{align*}
where in the first one we have also used the fact that $\mathrm{d}^{L}\tau_{\Xi}^{-1} \mathrm{Ad}^{-1}_{\tau(\Xi)} = \mathrm{d}^{L}\tau_{-\Xi}^{-1}$. This first one is in fact one of the equations we were after.

Inserting both of these in eq.\eqref{eq:delta_eta_var}, and leaving only terms with $\Pi_k$ on the left-hand side we finally obtain:
\begin{equation*}
\mathrm{M}_k^i = \mathrm{Ad}_{\tau(\xi_{k,k+1})}^* \left[ \mu_{k} + h \sum_{j = 1}^s b_j \left( \mathrm{d}^L\tau^{-1}_{-\Xi_k^j} - \frac{a_{j i}}{b_i} \mathrm{d}^{L}\tau^{-1}_{-\xi_{k,k+1}}\right)^* \mathrm{N}_k^j\right]
\end{equation*}
which is the remaining equation we wanted to obtain.
\end{proof}

\begin{remark}
It is worth noting that perhaps eq.\eqref{eq:inner_mu_equations} is not the most geometric way to express such a relation. That form has been chosen for notational economy and mnemotechnic reasons.

In order to give a more geometrically sound version of this equation we should identify the different elements that appear in it. First, let us consider a point $(G_k^i, V_k^i) \in TG$ and the section of $T^*TG$ induced by $\mathrm{d}L$ on that point, i.e.,
\begin{equation*}
\left(G_k^i, V_k^i, D_1 L(G_k^i, V_k^i), D_2 L(G_k^i, V_k^i) \right).
\end{equation*}

One may rush to the conclusion that if we represent such an element in $T^*T\mathfrak{g}$ we should get:
\begin{align*}
&\left(\Xi_k^i, \mathrm{H}_k^i, \left(\mathrm{d}^L \tau_{\Xi_k^i}\right)^* L_{L_{g_k} \tau\left(\Xi_k^i\right)}^* D_1 L(L_{g_k} \tau\left(\Xi_k^i\right),\left(L_{g_k}\right)_* \mathrm{d}^L \tau_{\Xi_k^i} \mathrm{H}_k^i), \right.\\
&\hspace{3cm}\left. \left(\mathrm{d}^L \tau_{\Xi_k^i}\right)^* L_{L_{g_k} \tau\left(\Xi_k^i\right)}^* D_2 L(L_{g_k} \tau\left(\Xi_k^i\right),\left(L_{g_k}\right)_* \mathrm{d}^L \tau_{\Xi_k^i} \mathrm{H}_k^i) \right),
\end{align*}
which, using the invariance of the Lagrangian, reduces to
\begin{align*}
&\left(\Xi_k^i, \mathrm{H}_k^i, \left(\mathrm{d}^L \tau_{\Xi_k^i}\right)^* L_{L_{g_k} \tau\left(\Xi_k^i\right)}^* D_1 \ell(L_{g_k} \tau\left(\Xi_k^i\right), \mathrm{d}^L \tau_{\Xi_k^i} \mathrm{H}_k^i), \right.\\
&\hspace{3cm}\left. \left(\mathrm{d}^L \tau_{\Xi_k^i}\right)^* D_2 \ell(L_{g_k} \tau\left(\Xi_k^i\right), \mathrm{d}^L \tau_{\Xi_k^i} \mathrm{H}_k^i) \right),
\end{align*}
but this is not correct in this instance. The reason for this is that this expression is not compatible with the restriction of the natural inner product $\left\langle\cdot, \cdot\right\rangle: TTG \times T^*TG \to \mathbb{R}$ to $T^{(2)}G$. This compatibility is required to obtain the correct invariance when considering Tulczyjew's triple, which allows us to interpret the third component as the velocity associated to the canonical momenta. In particular, $\alpha_G^{-1} : T^*TG \to TT^*G$, $(g,v,P_q,P_v) \mapsto (g,p = P_v,V_g = v,V_p = P_q)$.

Thus the correct representation in $T^*T\mathfrak{g}$ must be:
\begin{align*}
&\left(\Xi_k^i, \mathrm{H}_k^i, \left(\mathrm{d}^L \tau_{\Xi_k^i}\right)^* L_{L_{g_k} \tau\left(\Xi_k^i\right)}^* D_1 \ell(L_{g_k} \tau\left(\Xi_k^i\right), \mathrm{d}^L \tau_{\Xi_k^i} \mathrm{H}_k^i) \right.\\
&\hspace{1.5cm}+ \left(\mathrm{dd}^L \tau_{\Xi_k^i}\right)^*\left(\Xi_k^i, \left(\mathrm{d}^L \tau_{\Xi_k^i}\right)^* D_2 \ell(L_{g_k} \tau\left(\Xi_k^i\right),\mathrm{d}^L \tau_{\Xi_k^i} \mathrm{H}_k^i)\right),\\
&\hspace{3cm}\left. \left(\mathrm{d}^L \tau_{\Xi_k^i}\right)^* D_2 \ell(L_{g_k} \tau\left(\Xi_k^i\right), \mathrm{d}^L \tau_{\Xi_k^i} \mathrm{H}_k^i) \right).
\end{align*}

Clearly $\widetilde{\mathrm{M}}_k^i$ is proportional to this third component. If we transport this element from $\Xi_k^i$ to $0 \in \mathfrak{g}$ we are left with:
\begin{align*}
&\left(0,  \mathrm{d}^L \tau_{\Xi_k^i} \mathrm{H}_k^i, L_{L_{g_k} \tau\left(\Xi_k^i\right)}^* D_1 \ell(L_{g_k} \tau\left(\Xi_k^i\right), \mathrm{d}^L \tau_{\Xi_k^i} \mathrm{H}_k^i), D_2 \ell(L_{g_k} \tau\left(\Xi_k^i\right), \mathrm{d}^L \tau_{\Xi_k^i} \mathrm{H}_k^i) \right).
\end{align*}
which shows that this third component becomes $\left(\mathrm{d}^L \tau_{\Xi_k^i}^{-1}\right)^* \mathrm{N}_k^i$.

In order to simplify the final expression, let us write these as $\left(\xi_{k}^i,{\eta_{\xi}}_{k}^i, {\nu_{\xi}}_{k}^i, {\mu_{\xi}}_{k}^i\right)$ and $\left(0, {\eta_{0}}_{k}^i, {\nu_{0}}_{k}^i, {\mu_{0}}_{k}^i\right)$ respectively. Taking this into account we can finally rewrite eq.\eqref{eq:inner_mu_equations} as
\begin{align*}
&\left(\mathrm{d}^L \tau_{\xi_{k,k+1}}^{-1}\right)^* {\mu_{\xi}}_k^i =\\
&\mathrm{Ad}_{\tau(\xi_{k,k+1})}^* \left\lbrace\mu_k + h \sum_{j = 1}^s b_j \left[ \left(\mathrm{Ad}_{\tau(\xi_{k}^j)}^{-1}\right)^* {\nu_0}_{k}^j - \frac{a_{j i}}{b_i} \left(\mathrm{Ad}_{\tau(\xi_{k,k+1})}^{-1}\right)^* \left(\mathrm{d}^L \tau_{\xi_{k,k+1}}^{-1}\right)^* {\nu_{\xi}}_{k}^j \right] \right\rbrace,
\end{align*}
and if we use the notation $\left(\mathrm{d}^L \tau_{\xi_{k,k+1}}^{-1}\right)^* {\zeta_{\xi}}_k^i = \left.\hat{\zeta}_{\xi}\right._k^i$, this reduces to
\begin{align*}
&\left.\hat{\mu}_{\xi}\right._k^i = \mathrm{Ad}_{\tau(\xi_{k,k+1})}^* \left\lbrace\mu_k + h \sum_{j = 1}^s b_j \left[ \left(\mathrm{Ad}_{\tau(\xi_{k}^j)}^{-1}\right)^* {\nu_0}_{k}^j - \frac{a_{j i}}{b_i} \left(\mathrm{Ad}_{\tau(\xi_{k,k+1})}^{-1}\right)^* \left.\hat{\nu}_{\xi}\right._{k}^j \right] \right\rbrace.
\end{align*}
\end{remark}

\subsection{Holonomic constraints}
In order to consider nonholonomic constraints we will first check the holonomic case. Assume now that our system is subjected to a set of holonomic constraints locally spanned by a function $\Phi: G \to \mathbb{R}^m$. The inclusion of these constraints amounts to the addition of a new set of terms to the discrete Hamilton-Pontryagin action, eq.\eqref{eq:Lie_discrete_Hamilton-Pontriagyn_action}:
\begin{equation*}
\left(\mathcal{\widetilde{J}_{HP}}\right)_d = \left(\mathcal{J_{HP}}\right)_d + \sum_{k = 0}^{N-1} \sum_{i = 1}^s h b_i \left\langle \Lambda_k^i, \Phi(g_k \tau(\Xi_k^i))\right\rangle
\end{equation*}

Once more, we restrict to methods satisfying hypotheses \ref{itm:H1}, \ref{itm:H2} and \ref{itm:H3}. Variation of these new terms sheds the following:
\begin{align*}
\delta g : \quad &\sum_{k = 0}^{N-1} \sum_{i = 1}^s h b_i \left\langle \Lambda_k^i, \left\langle D\Phi(g_k \tau(\Xi_k^i)), \delta g_k \tau(\Xi_k^i)\right\rangle \right\rangle\\
&= \sum_{k = 0}^{N-1} \sum_{i = 1}^s h b_i\left\langle \left\langle \Lambda_k^i, \left(\mathrm{d}^L \tau_{-\Xi_k^i}^{-1}\right)^* \left(\mathrm{d}^L \tau_{\Xi_k^i}\right)^* L_{g_k \tau(\Xi_k^i)}^* D\Phi(g_k \tau(\Xi_k^i))\right\rangle, \zeta_k\right\rangle,\\
\delta \Xi : \quad &\sum_{k = 0}^{N-1} \sum_{i = 1}^s h b_i \left\langle \Lambda_k^i, \left\langle D\Phi(g_k \tau(\Xi_k^i)), g_k D\tau(\Xi_k^i) \delta \Xi_k^i\right\rangle\right\rangle\\
&= \sum_{k = 0}^{N-1} \sum_{i = 1}^s h b_i \left\langle \left\langle \Lambda_k^i, \left(\mathrm{d}^L \tau_{\Xi_k^i}\right)^* L_{g_k \tau(\Xi_k^i)}^* D\Phi(g_k \tau(\Xi_k^i))\right\rangle, \delta \Xi_k^i\right\rangle,\\
\delta \Lambda : \quad &\sum_{k = 0}^{N-1} \sum_{i = 1}^s h b_i \left\langle \delta \Lambda_k^i, \Phi(g_k \tau(\Xi_k^i))\right\rangle.
\end{align*}

The first two manifest in a modification of eqs.\eqref{eq:inner_mu_equations} and \eqref{eq:outer_mu_equations} with $\mathrm{N}_k^i \mapsto \mathrm{N}_k^i + \mathrm{T}_k^i$, where:
\begin{equation*}
\mathrm{T}_k^i = \left\langle \Lambda_k^i, \left(\mathrm{d}^L \tau_{\Xi_k^i}\right)^* L_{g_k \tau(\Xi_k^i)}^* D\Phi(g_k \tau(\Xi_k^i))\right\rangle
\end{equation*}

Of course, the variations in $\Lambda$ are nothing more than the constraint equations themselves, which must be added to the rest of the equations.

As in the vector space case, we will still need to add the tangency condition to these equations to generate a well-defined Hamiltonian map $\tilde{\mathcal{F}}_{L_d}: (g_{k}, \mu_{k}) \mapsto (g_{k+1}, \mu_{k+1})$. This final equation must read:
\begin{equation*}
\left\langle L_{g_{k+1}}^*D \Phi(g_{k+1}), D_2 \mathscr{h}\left(g_{k+1}, \mu_{k+1}\right)\right\rangle = 0
\end{equation*}
where $\mathscr{h}: G \times \mathfrak{g}^* \to \mathbb{R}$ is the corresponding reduced Hamiltonian function.

\subsection{Nonholonomic constraints}
This time, assume that our system is subjected to a set of nonholonomic constraints locally spanned by a function $\Phi : TG \to \mathbb{R}^m$ and that $\Phi \circ \mathbb{F}L^{-1} = \Psi : T^*G \to \mathbb{R}^m$.

%The continuous equations of motion for a nonholonomic system on a Lie group are:
%\begin{equation*}
%\left\lbrace
%\begin{array}{rl}
%\frac{\mathrm{d}}{\mathrm{d}t} \left(\frac{\partial \ell}{\partial \eta}\right) - \mathrm{ad}_{\eta(t)}^* \left(\frac{\partial \ell}{\partial \eta}\right) & = L_{g(t)}^{*}\left[ \frac{\partial \ell}{\partial g} + \left\langle \lambda, \frac{\partial \Phi}{\partial \dot{g}}\right\rangle\right],\\
%\Phi(g,\dot{g}) &= 0,\\
%\dot{g} &= \left(L_{g}\right)_{*} \eta.
%\end{array}
%\right.
%\end{equation*}

Applying the same reasoning as in the vector space case it is clear that we need to apply the substitution $\mathrm{N}_k^i \mapsto \mathrm{N}_k^i + \left(\mathrm{T}_\mathrm{nh}\right)_k^i$, where:
\begin{equation*}
\left(\mathrm{T}_\mathrm{nh}\right)_k^i = \left\langle \Lambda_k^i, \left(\mathrm{d}^L \tau_{\Xi_k^i}\right)^* L_{g_k \tau(\Xi_k^i)}^* D_2\Phi\left(g_k \tau(\Xi_k^i), g_k \tau(\Xi_k^i) \mathrm{d}^L \tau_{\Xi_k^i} \mathrm{H}_k^i \right) \right\rangle
\end{equation*}

Aside from that, we need to introduce the equations
\begin{equation*}
\mu_{k}^i = \mathrm{Ad}_{\tau(\Xi_{k}^i)}^* \left[ \mu_{k} + h \sum_{j = 1}^s a_{i j} \left(\mathrm{d}^L\tau^{-1}_{-\Xi_k^j}\right)^* \mathrm{N}_k^j\right],
\end{equation*}
together with the constraint equations
\begin{equation*}
\Psi\left(g_k \tau(\Xi_k^i), L_{\left(g_k \tau(\Xi_k^i)\right)^{-1}}^* \mu_{k}^i\right) = 0.
\end{equation*}

If the constraint functions can be (left) trivialized so that we can write $\phi : G \times \mathfrak{g} \to \mathbb{R}^m$ and $\psi : G \times \mathfrak{g}^* \to \mathbb{R}^m$, then
\begin{equation*}
\left(\mathrm{T}_\mathrm{nh}\right)_k^i = \left\langle \Lambda_k^i, \left(\mathrm{d}^L\tau_{\Xi_k^i}\right)^* D_2\phi\left(g_k \tau(\Xi_k^i), \mathrm{d}^L \tau_{\Xi_k^i} \mathrm{H}_k^i \right) \right\rangle,
\end{equation*}
and the constraint equations that must be imposed become
\begin{equation*}
\psi\left(g_k \tau(\Xi_k^i), \mu_k^i\right) = 0.
\end{equation*}

For the resulting nonholonomic integrators the results of theorems \ref{thm:local_convergence} and \ref{thm:global_convergence} still hold thanks to the way in which we have handled the discretisation. Thus the order of these integrators matches the expected order one would obtain in the holonomic case.

\section{Numerical experiments}\label{sec:examples}
In this section we study several nonholonomic systems using our methods. These will allow us to compare our theoretical results with actual numerical simulations and shown some of the properties of our integrators.
\subsection{Nonholonomic particle in an harmonic potential}
In this case we have $Q = \mathbb{R}^3$ and its corresponding Lagrangian and constraint functions can be written as
\begin{gather*}
L(x, y, z, v_x, v_y, v_z) = \frac{1}{2}(v_x^2 + v_y^2 + v_z^2) - \frac{1}{2}(x^2 + y^2),\\
\Phi(x, y, z, v_x, v_y, v_z) = v_z - y v_x.
\end{gather*}

This is a classic nonholonomic system frequently used as an academic example. As it can be seen in fig. \ref{fig:Order_plots}, the numerical order obtained coincides with the expected one.

\begin{figure}[h!]
\centering
\begin{subfigure}{.5\textwidth}
  \centering
  \includegraphics[width=6.2cm, clip=true, trim=40mm 90mm 40mm 90mm]{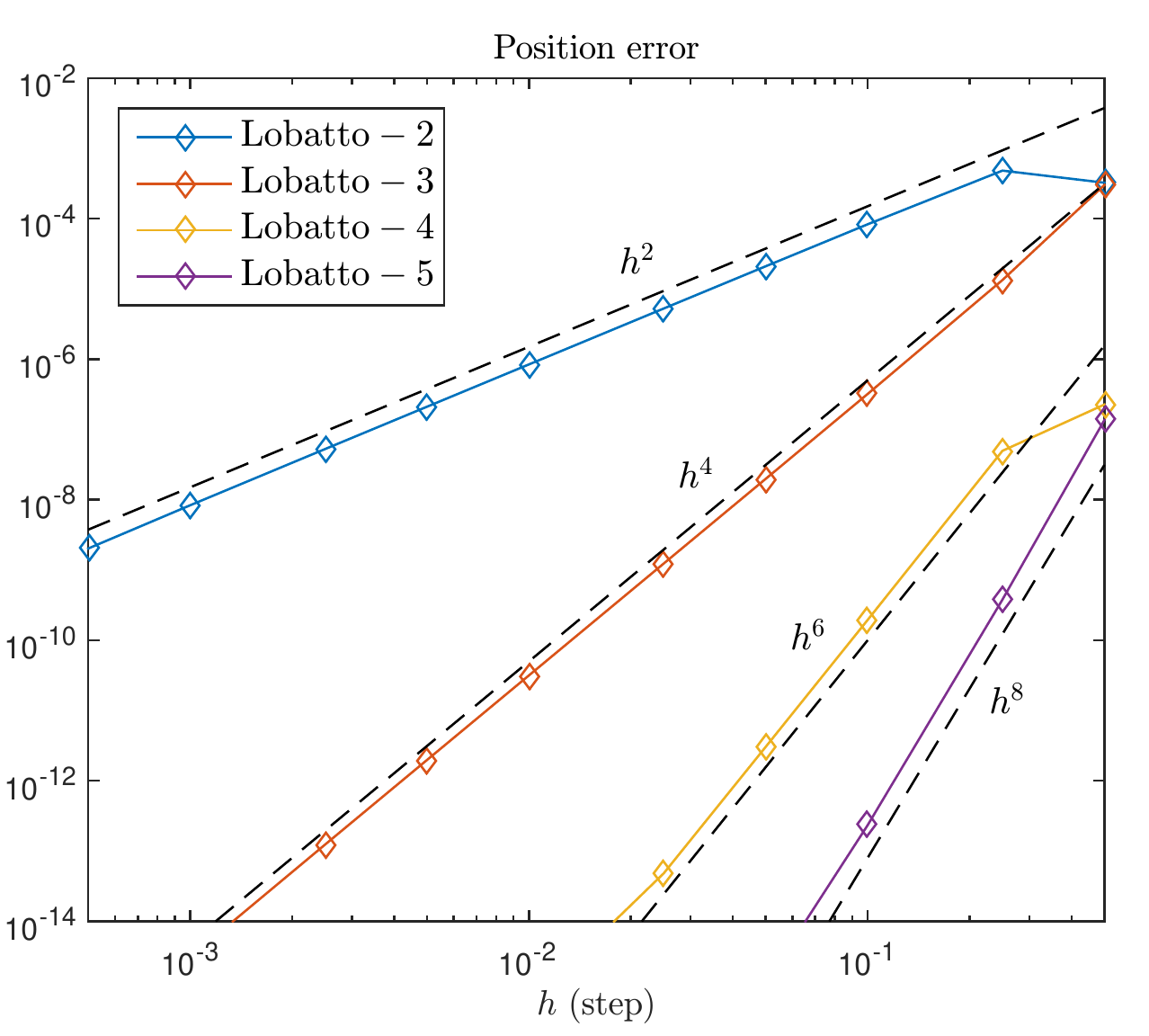}
  \includegraphics[width=6.2cm, clip=true, trim=40mm 90mm 40mm 90mm]{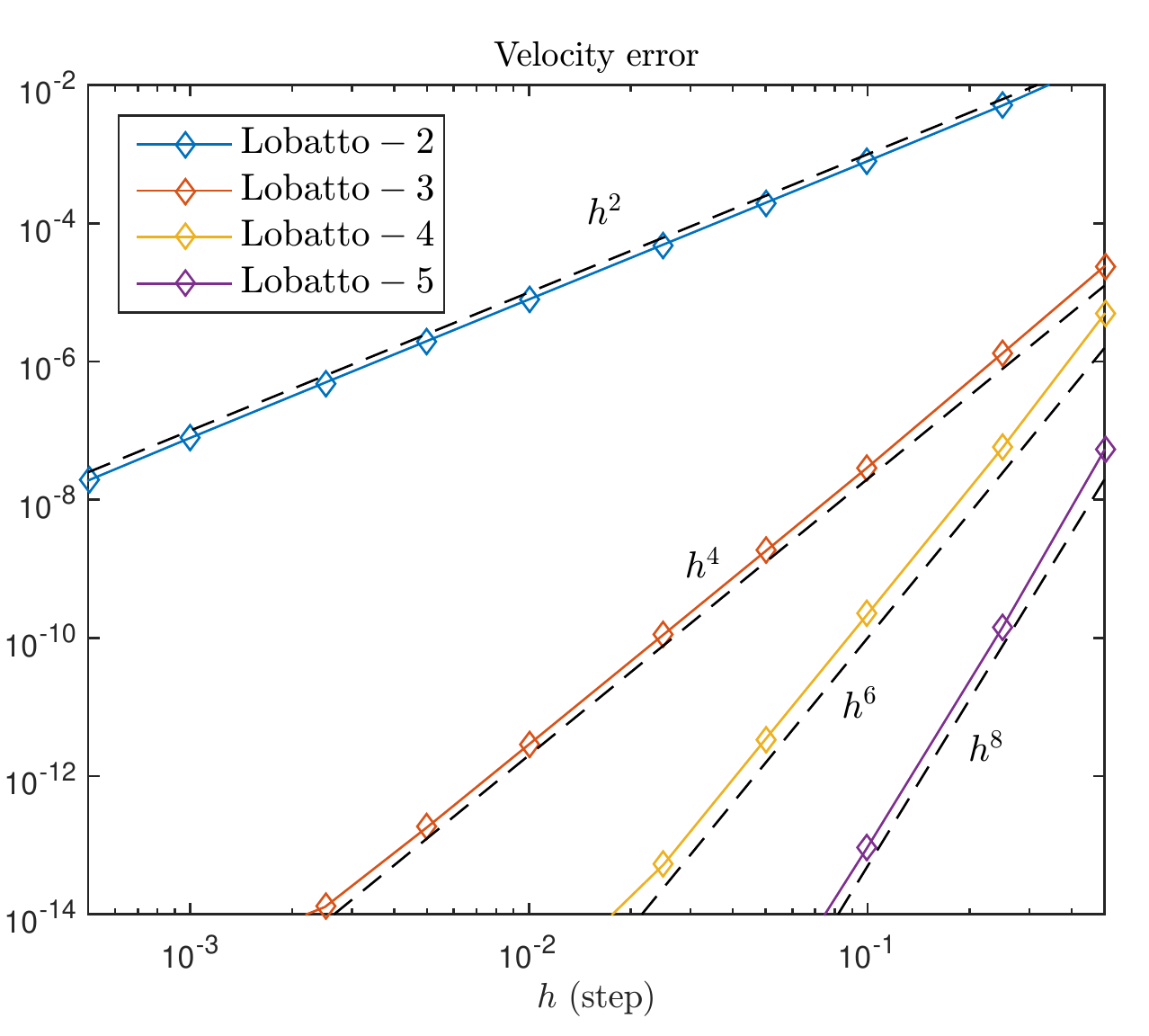}
\end{subfigure}%
\begin{subfigure}{.5\textwidth}
  \centering
  \includegraphics[width=6.2cm, clip=true, trim=40mm 90mm 40mm 90mm]{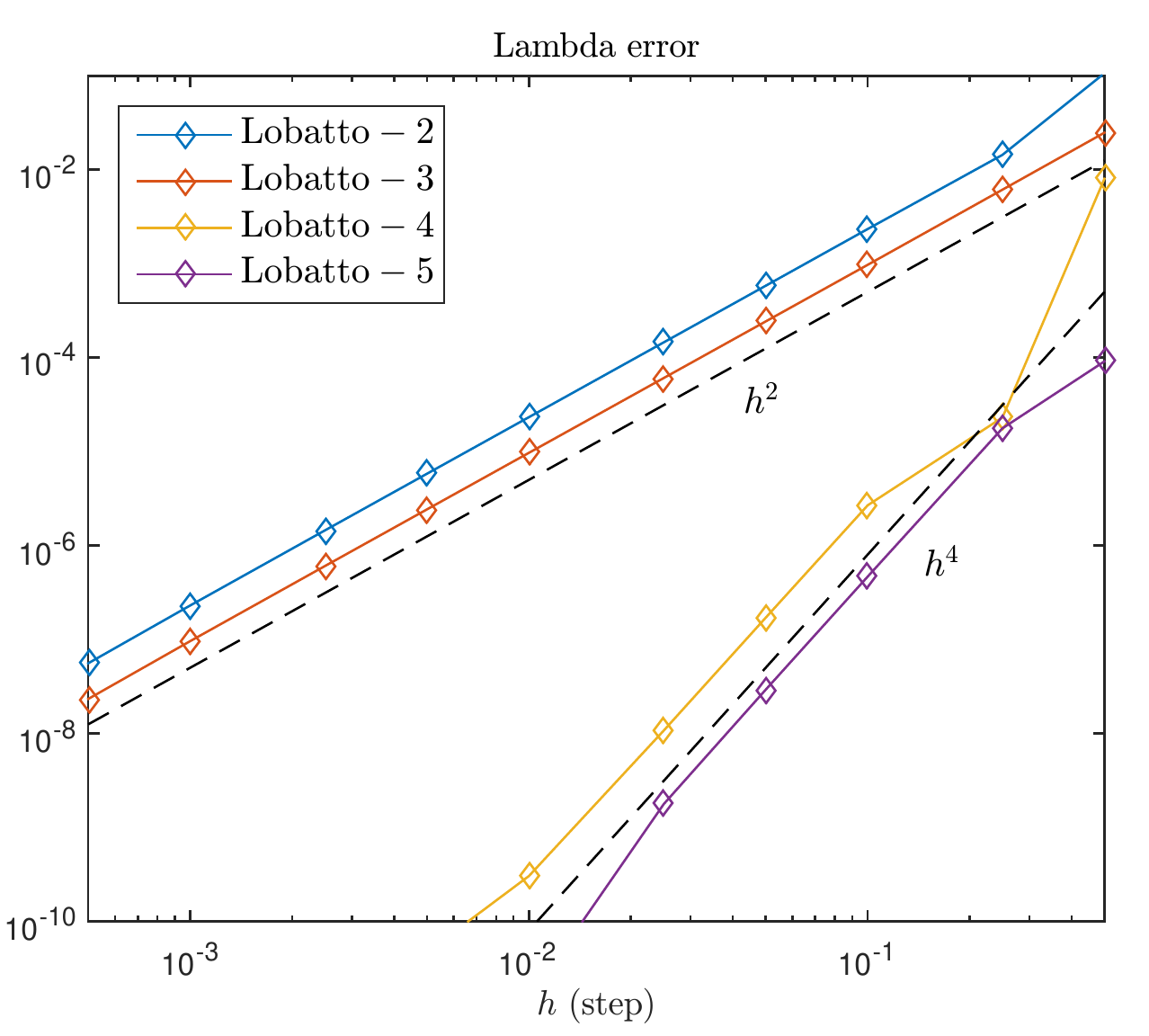}
  \includegraphics[width=6.2cm, clip=true, trim=40mm 90mm 40mm 90mm]{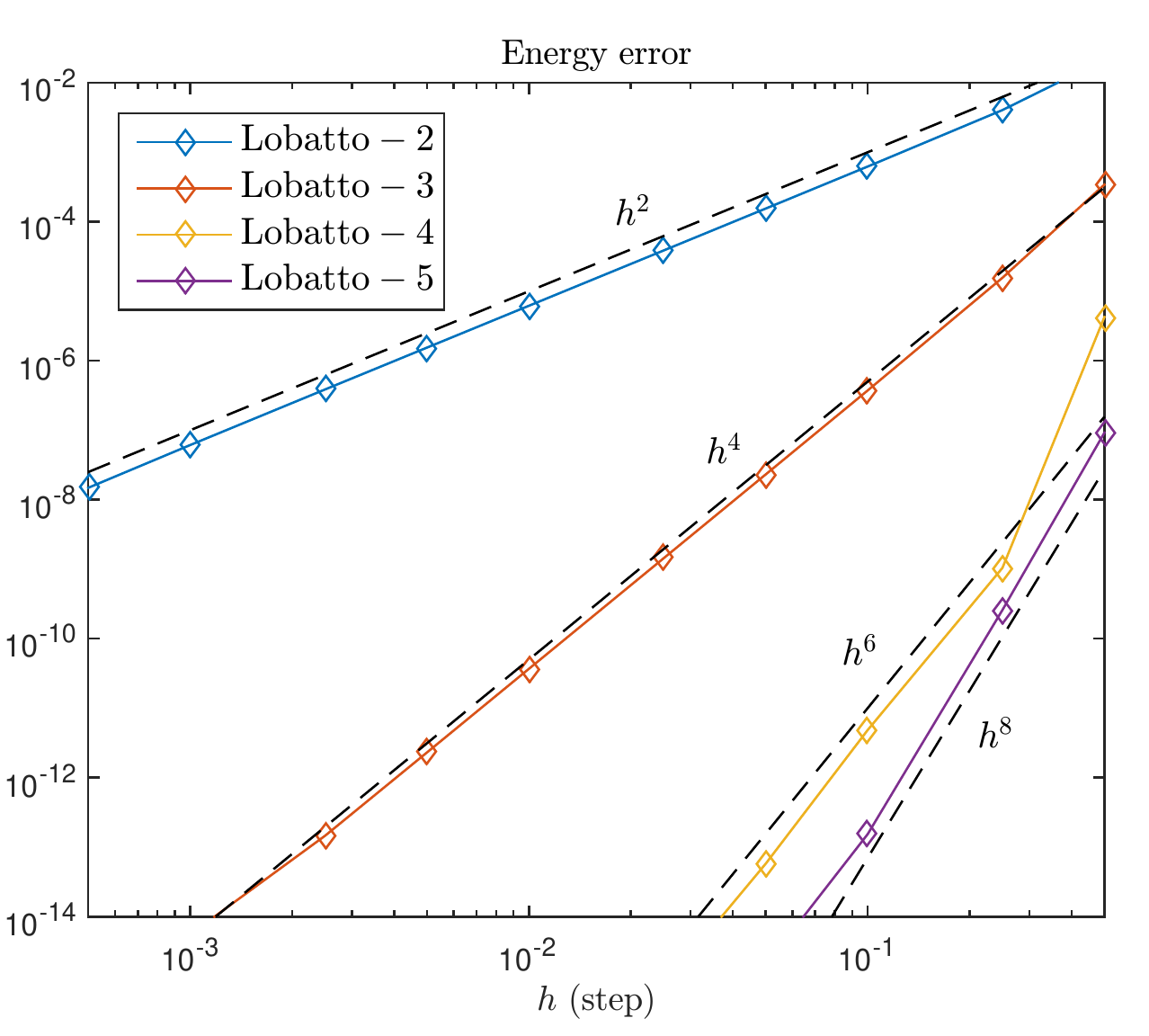}
\end{subfigure}
\caption{\scriptsize{Relative error w.r.t. reference values obtained for $h = \texttt{1e-4}$ for integrators of various orders. As can be seen, the behaviour of the Lagrange multipliers differs from the other variables, as predicted.}}
\label{fig:Order_plots}
\end{figure}

\subsection{Pendulum-driven continuous variable transmission (CVT)}
For the sake of simplicity let us consider $Q = \mathbb{R}^3$ as the configuration manifold for this system. Its corresponding Lagrangian and constraint functions are
\begin{gather*}
L(x, y, z, v_x, v_y, v_z) = \frac{1}{2}(v_x^2 + v_y^2 + v_z^2) - \frac{1}{2}\left(x^2 + z^2 - 2 \cos(y) + \epsilon \sin(2 y) \right),\\
\Phi(x, y, z, v_x, v_y, v_z) = v_z + \sin(y) v_x.
\end{gather*}
with $\epsilon \geq 0$. This system was featured in a recent preprint, \cite{modin-verdier}, where it was used as a benchmark for the behaviour of different numerical integrators. In particular those authors wanted to draw attention towards the behaviour of the energy of the $(x,z,v_x,v_z)$, \emph{passanger}, and $(y, v_y)$, \emph{driver}, subsystems when $\epsilon = 0, \frac{1}{2}$. This is done for two sets of initial conditions, one corresponding to low energy where the driver subsystem is restricted to its oscillatory regime, and one corresponding to high energy where the driver subsystems rotates.

The corresponding initial conditions are
\begin{equation*}
\vec{\mathbf{q}}_0 = \left(1, 0, 1\right), \quad \vec{\mathbf{v}}_0 = \left(0, \frac{3 \sqrt{10}}{5}, 0\right)
\end{equation*}
for the low energy case, with total energy $E_T = \frac{9}{5}$ ($E_d = \frac{4}{5}$, $E_p = 1$), and
\begin{equation*}
\vec{\mathbf{q}}_0 = \left(1, 0, 1\right), \quad \vec{\mathbf{v}}_0 = \left(0, \sqrt{8}, 0\right)
\end{equation*}
for the high energy case, with total $E_T = 4$ ($E_d = 3$, $E_p = 1$).

It is interesting to note that for the time step chosen in that paper, namely $h = \pi/10$, our integrator exhibits a rather erratic behaviour which suggests that the step might be too big. If a more sensible value, such as $h = 1/10$ is chosen, the behaviour of our integrator displays an excellent energy behaviour, as can be seen in figures \ref{fig:LowE} and \ref{fig:HighE}. This is true both for each subsystem and for the complete system. %One cannot help but wonder whether some of the problematic numerical results of \verb|\{ModinVerdier}| are in fact due to a poor choice of time step.

\begin{figure}[h!]
\centering
\begin{subfigure}{.5\textwidth}
  \centering
  \includegraphics[width=6.25cm, clip=true, trim=33mm 95mm 40mm 90mm]{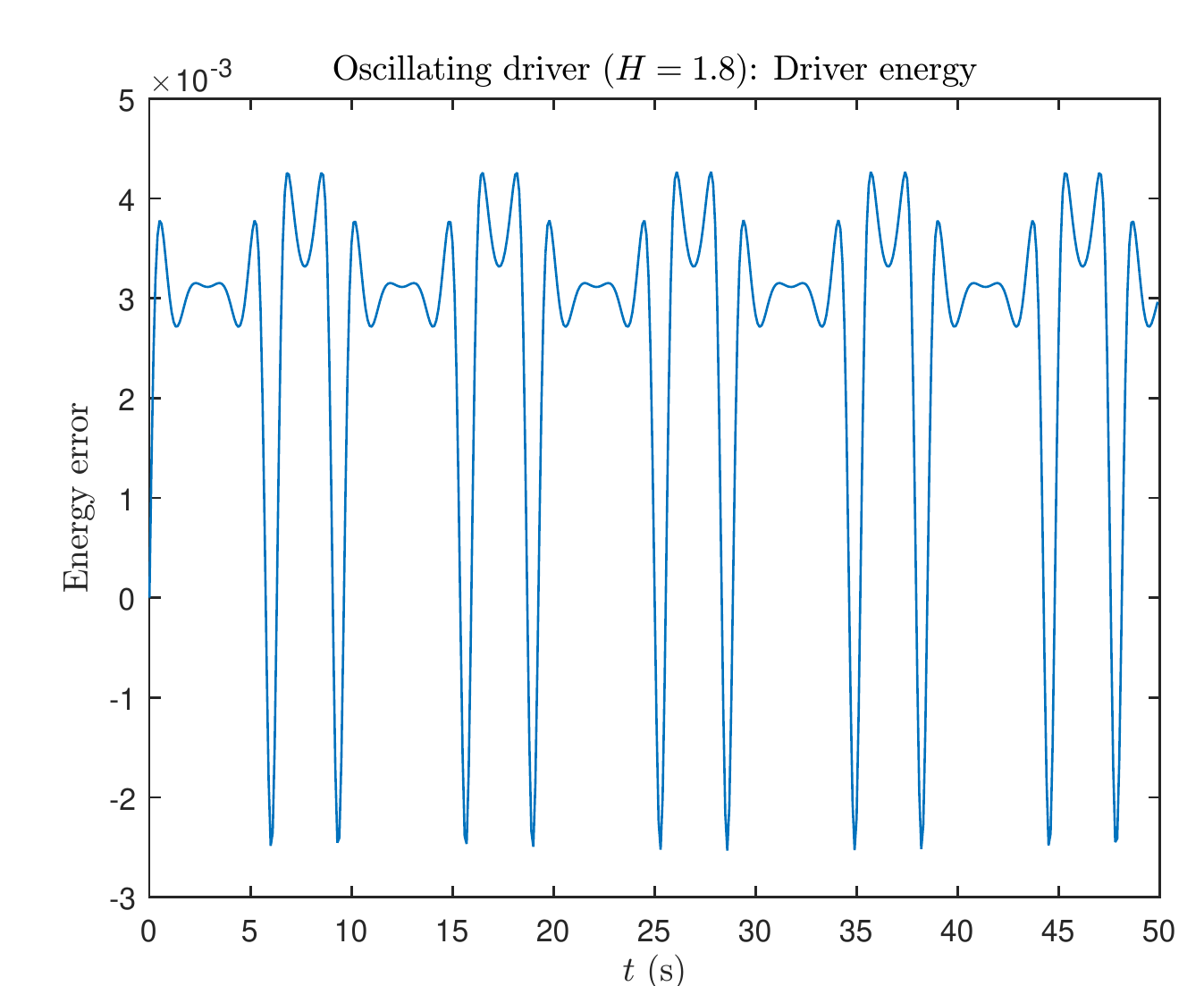}
  \caption{\scriptsize{Short term $E_d$ behaviour}}
  \label{fig:LowE_driver_short}
  \includegraphics[width=6.25cm, clip=true, trim=33mm 95mm 40mm 90mm]{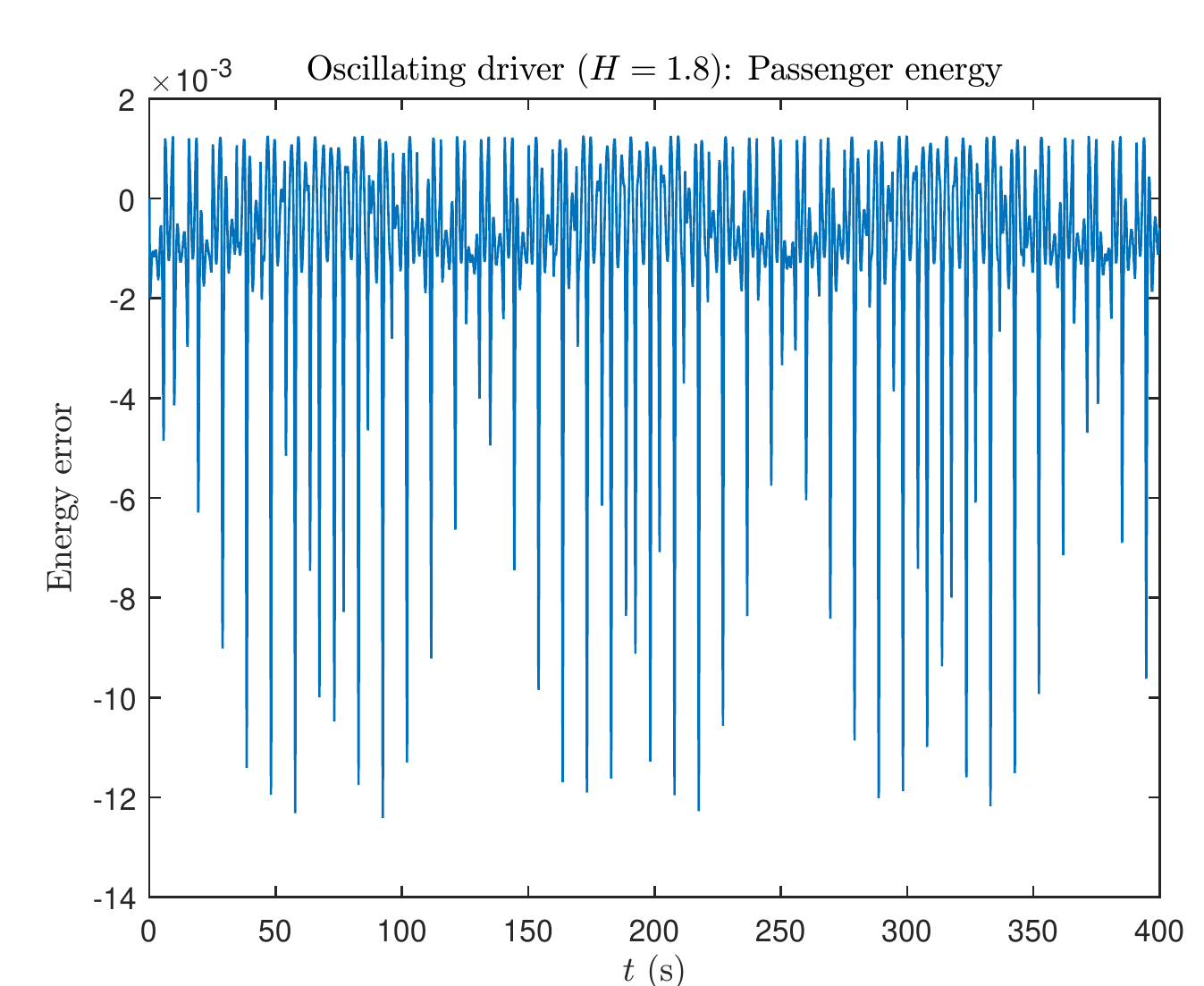}
  \caption{\scriptsize{Short term $E_p$ behaviour}}
  \label{fig:LowE_passenger_short}
\end{subfigure}%
\begin{subfigure}{.5\textwidth}
  \centering
  \includegraphics[width=6.25cm, clip=true, trim=33mm 95mm 40mm 90mm]{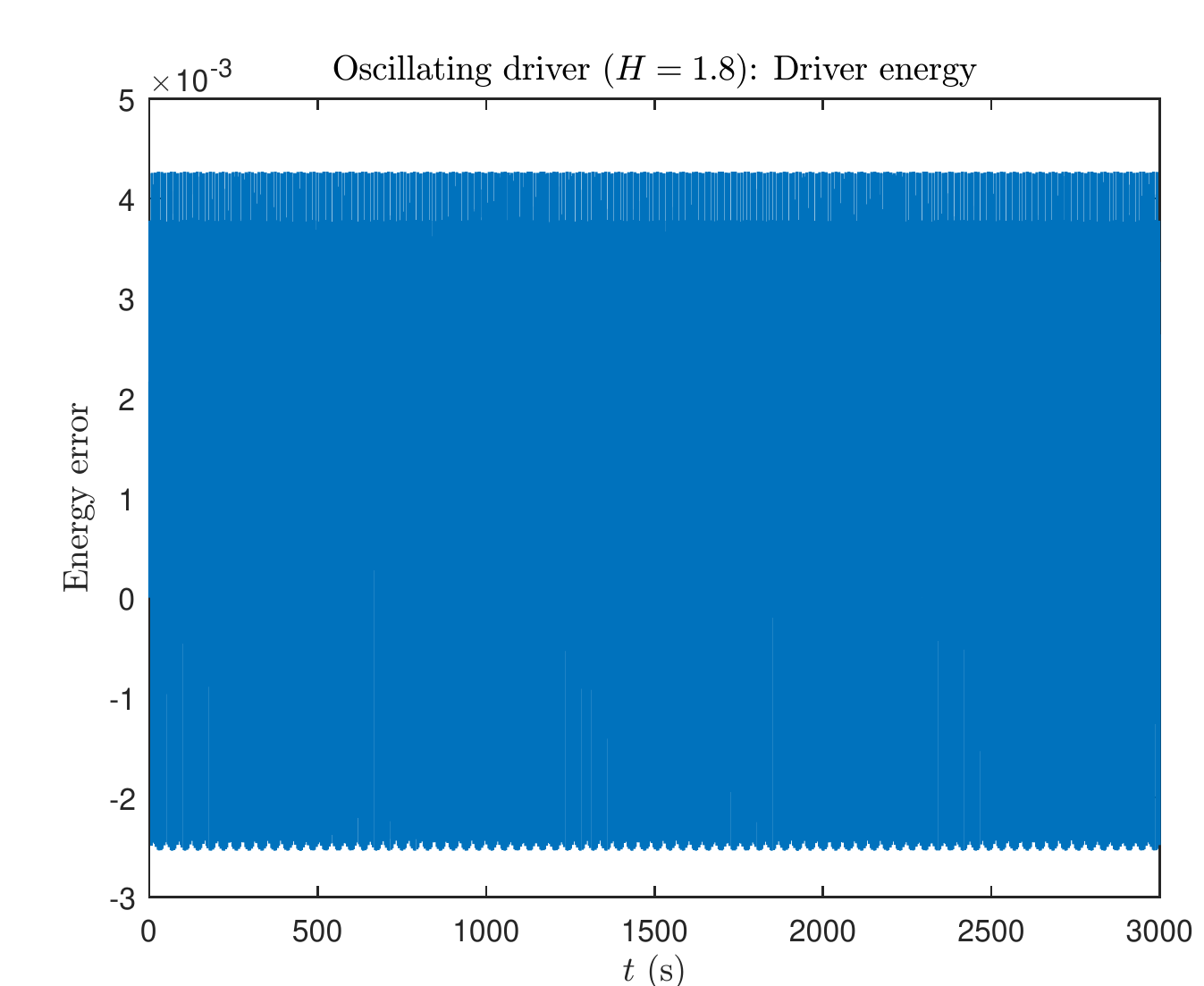}
  \caption{\scriptsize{Long term $E_d$ behaviour}}
  \label{fig:LowE_driver_long}
   \includegraphics[width=6.25cm, clip=true, trim=33mm 95mm 40mm 90mm]{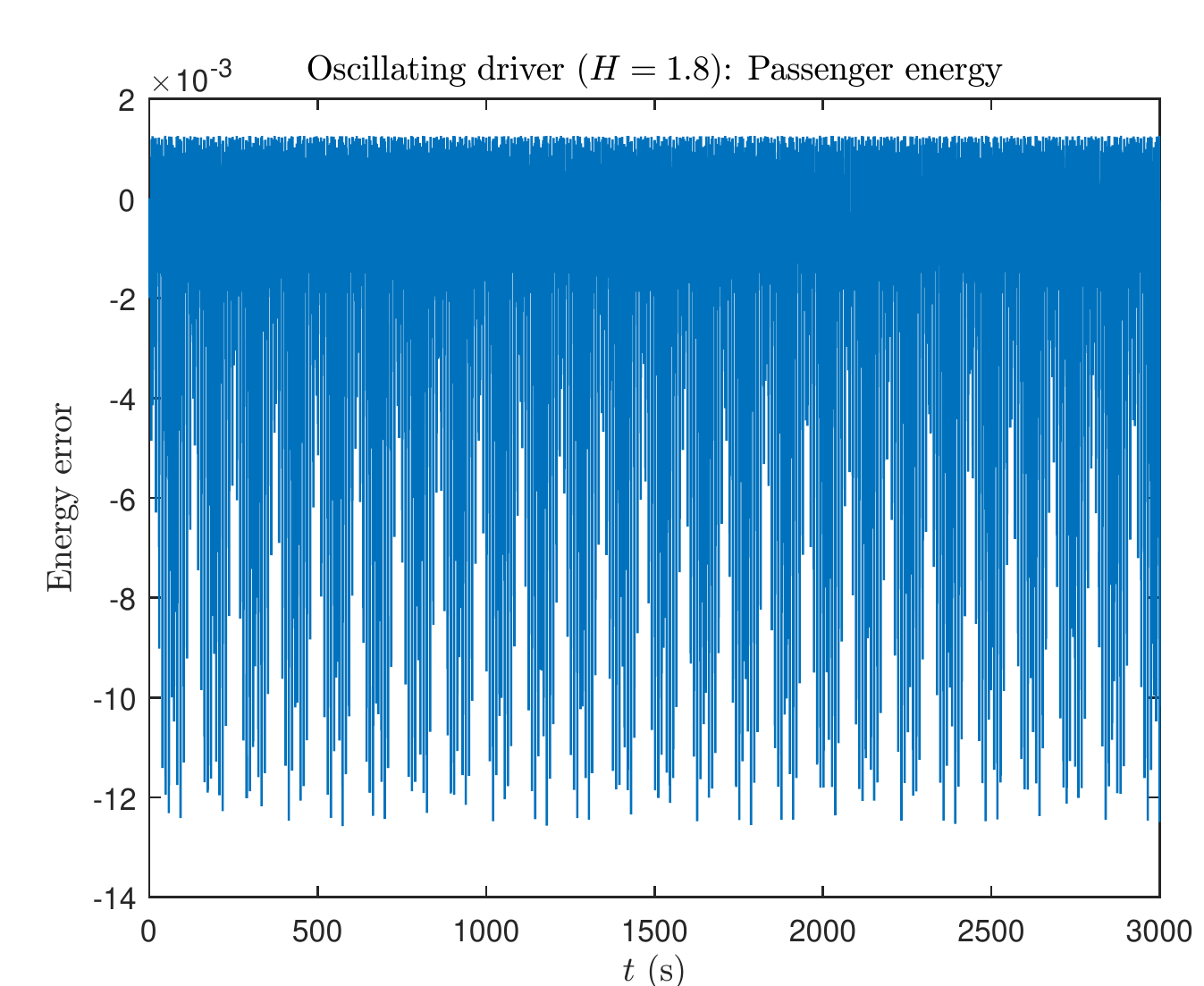}
  \caption{\scriptsize{Long term $E_p$ behaviour}}
  \label{fig:LowE_passenger_long}
\end{subfigure}
\caption{\scriptsize{Energy behaviour of the different subsystems for the oscillating regime ($E_T = 9/5$) with $\epsilon = 1/2$ for the Lobatto-2 method.}}
\label{fig:LowE}
\end{figure}

\begin{figure}[h!]
\centering
\begin{subfigure}{.5\textwidth}
  \centering
  \includegraphics[width=6.25cm, clip=true, trim=33mm 95mm 40mm 90mm]{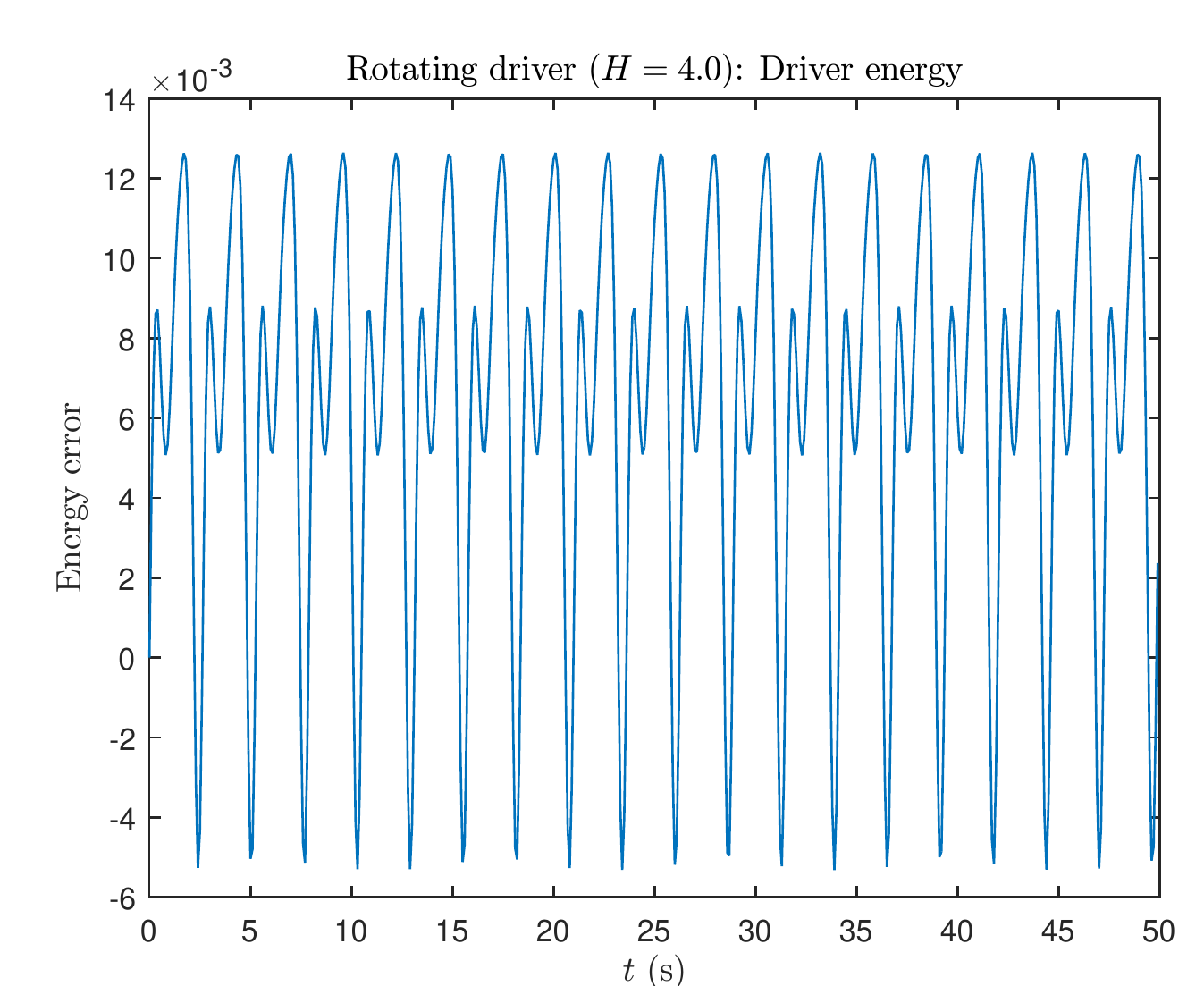}
  \caption{\scriptsize{Short term $E_d$ behaviour}}
  \label{fig:HighE_driver_short}
  \includegraphics[width=6.25cm, clip=true, trim=33mm 95mm 40mm 90mm]{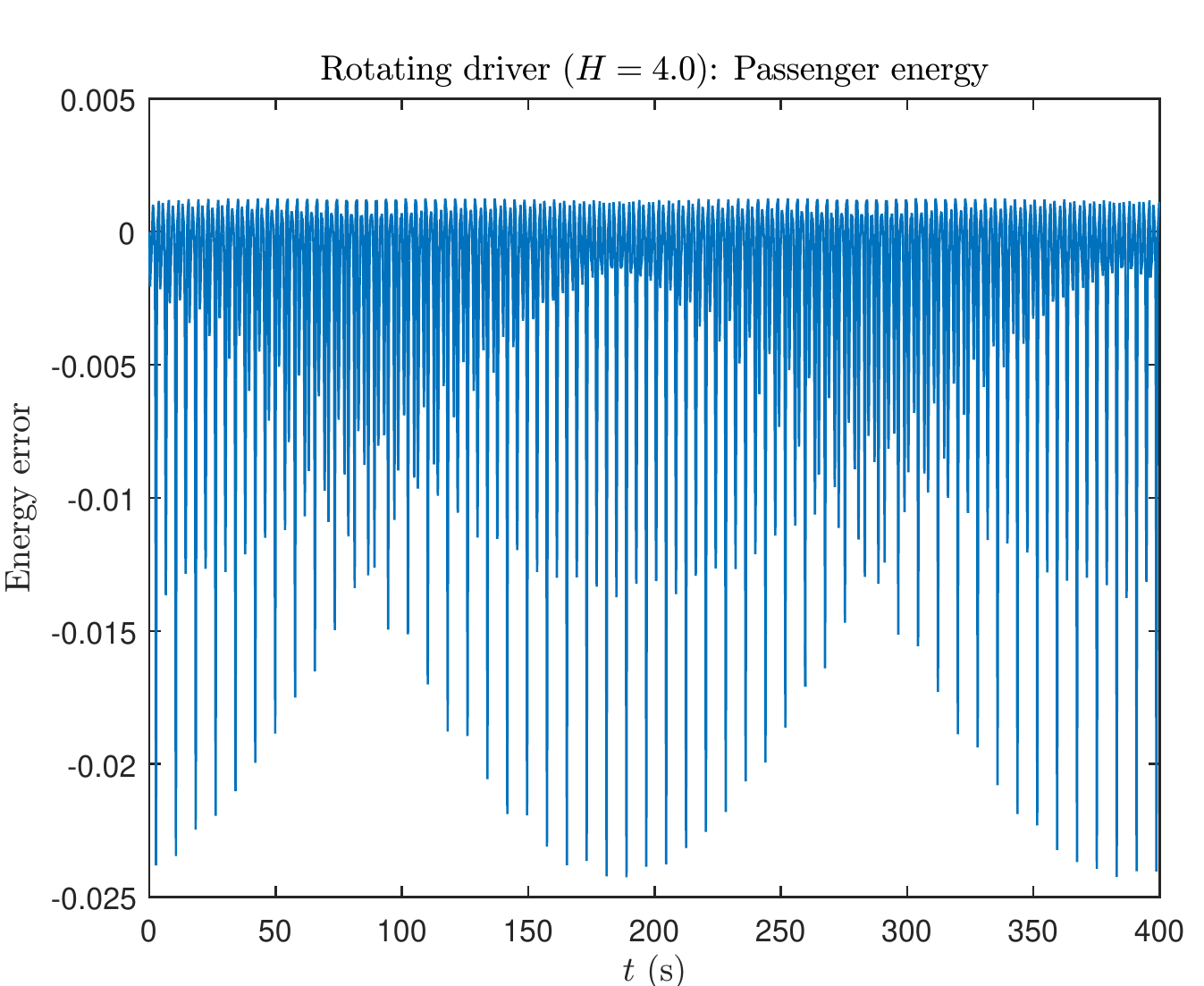}
  \caption{\scriptsize{Short term $E_p$ behaviour}}
  \label{fig:HighE_passenger_short}
\end{subfigure}%
\begin{subfigure}{.5\textwidth}
  \centering
  \includegraphics[width=6.25cm, clip=true, trim=33mm 95mm 40mm 90mm]{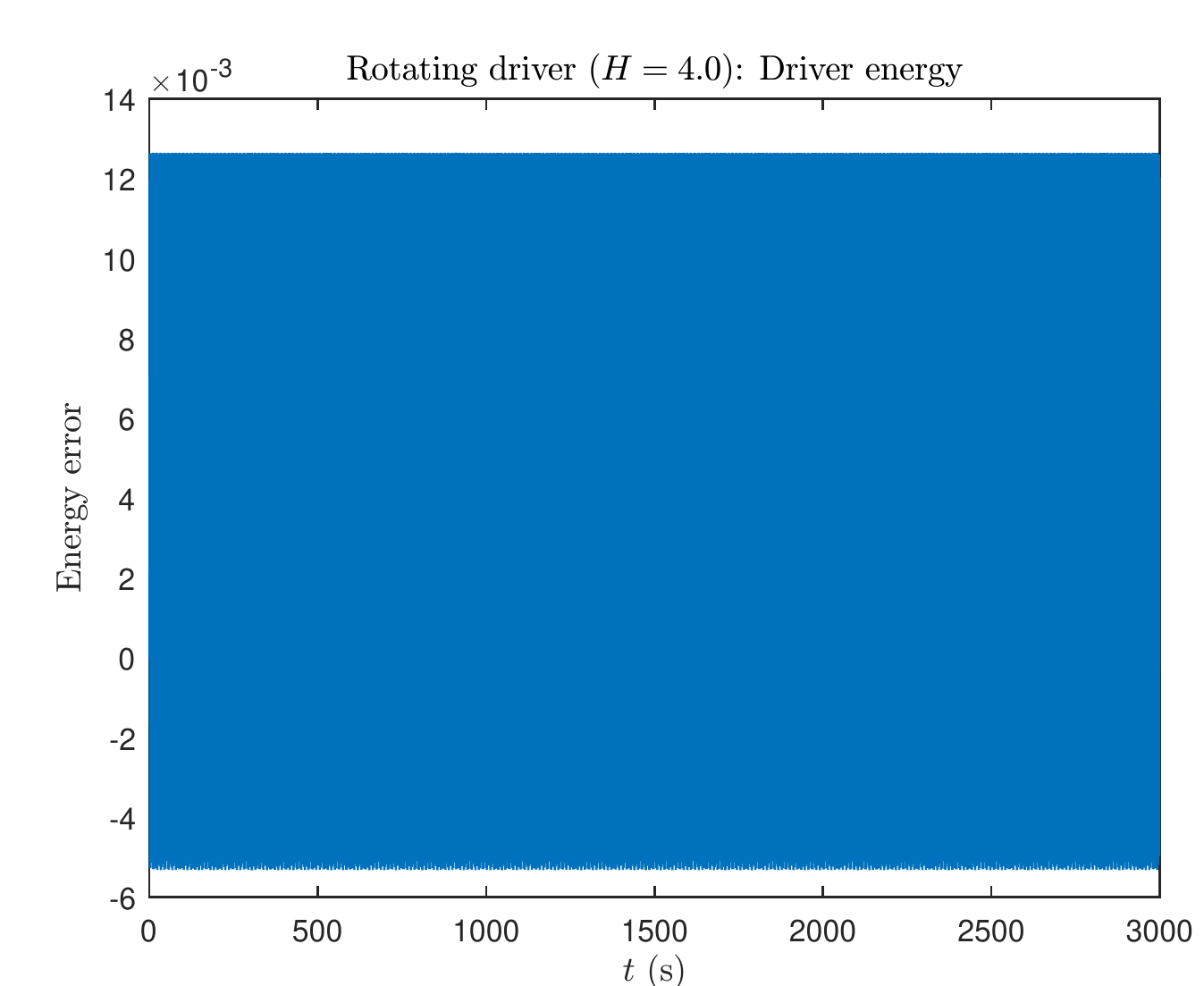}
  \caption{\scriptsize{Long term $E_d$ behaviour}}
  \label{fig:HighE_driver_long}
   \includegraphics[width=6.25cm, clip=true, trim=33mm 95mm 40mm 90mm]{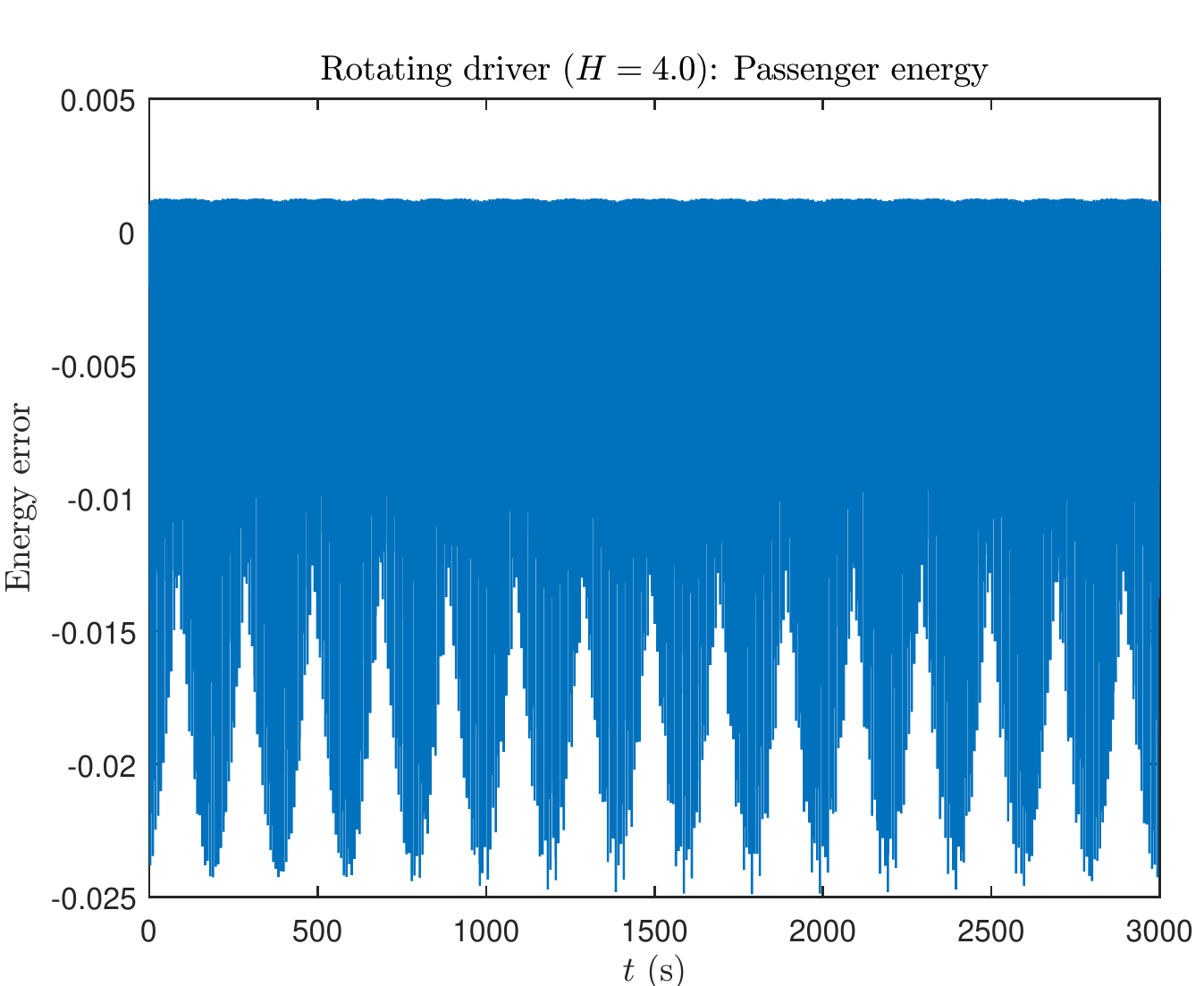}
  \caption{\scriptsize{Long term $E_p$ behaviour}}
  \label{fig:HighE_passenger_long}
\end{subfigure}
\caption{\scriptsize{Energy behaviour of the different subsystems for the rotating regime ($E_T = 4$) with $\epsilon = 1/2$ for the Lobatto-2 method.}}
\label{fig:HighE}
\end{figure}

\subsection{A Fully Chaotic nonholonomic System}
Our configuration manifold in this case is $Q = \mathbb{R}^n$, with $n = 2 m + 1$ and $m \geq 2$. The corresponding Lagrangian and constraint functions for this system are (see \cite{perlmutter06})
\begin{gather*}
L(\vec{\mathbf{q}},\vec{\mathbf{v}}) = \frac{1}{2}\left\Vert \vec{\mathbf{v}}\right\Vert_{2}^2 - \frac{1}{2}\left(\left\Vert \vec{\mathbf{q}}\right\Vert_{2}^2 + q^2_{m+2}q_{m+3}^2 + \sum_{i = 1}^m q_{1 + i}^2 q_{m + 1 + i}^2 \right),\\
\Phi(\vec{\mathbf{q}},\vec{\mathbf{v}}) = v_1 + \sum_{i = m + 2}^n q_i v_i.
\end{gather*}

This is a chaotic system displaying some strange behaviour. As $\Phi$ is linear in the velocities, the continuous system must preserve energy and one would expect the discrete system to neatly oscillate around that energy. Numerical results show otherwise, where the energy seems to perform a random walk and its mean squared error for ensembles of initial conditions on the same energy sheet appears to grow with time.

We performed numerical tests following those of \cite{Jay2009}, where $m = 3$ ($n = 7$) and ensembles of initial conditions with $E_0 = 3.06$,
\begin{equation*}
\vec{\mathbf{q}}_{0}(j,J) = \left( \alpha(j,J), 0.6, 0.4, 0.2, 1, 1, 1 \right), \quad \vec{\mathbf{v}}_{0}(j,J) = \left( 0, \beta(j,J), 0, 0, 0, 0, 0 \right),
\end{equation*}
where $\alpha(j,J) = \cos(j \pi/ (2J))$, $\beta(j,J) = \sin(j \pi/ (2J))$ and $j = 0, ..., J$.

The mean squared error of the energy at the $k$-th step is defined as:
\begin{equation}
\mu(E,k) = \frac{1}{J + 1} \sum_{j = 0}^J \left(E_{j k} - E_0\right)^2
\end{equation}
where $E_{j k}$ is the energy of the particle corresponding to the $j$-th initial condition measured at time step $k$.

\begin{figure}[h!]
\centering
  \includegraphics[width=\textwidth, clip=true, trim=25mm 0mm 25mm 0mm]{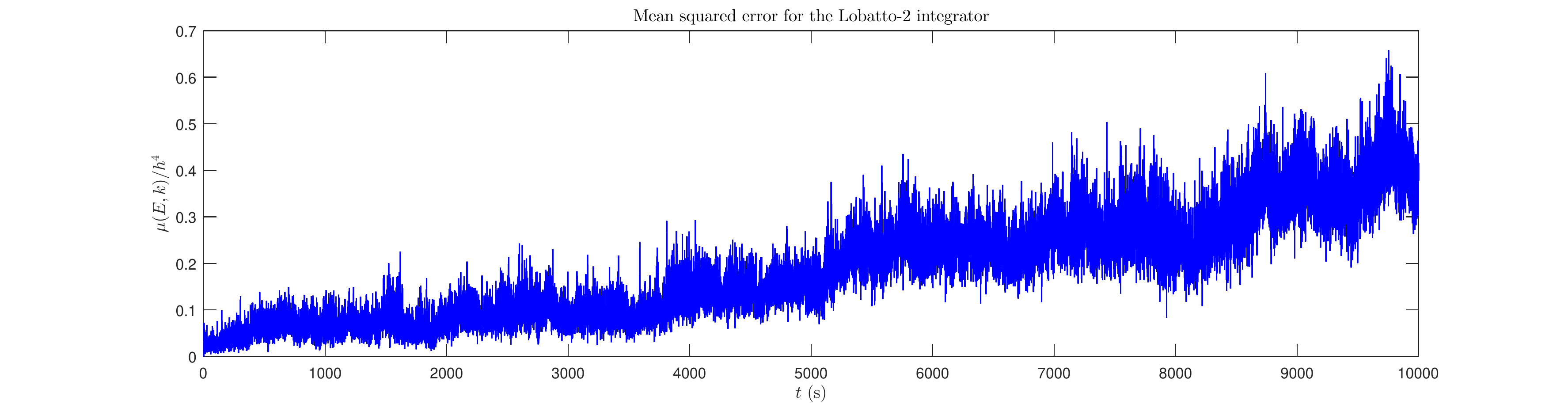}
  \includegraphics[width=\textwidth, clip=true, trim=25mm 0mm 25mm 0mm]{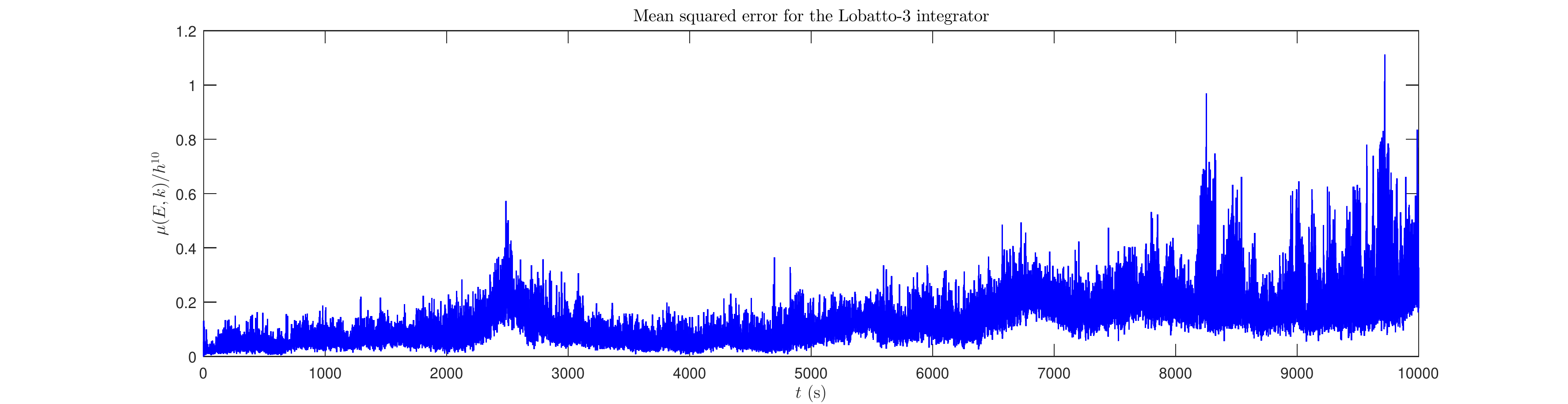}
\caption{\scriptsize{$\mu(E,k)/h^4$ and $\mu(E,k)/h^{10}$ behaviour with time for the Lobatto-2 and Lobatto-3 methods respectively.}}
\label{fig:perlmutter_mclachlan}
\end{figure}

Our integrator matches the behaviour of other non-energy preserving integrators such as SPARK or DLA with no apparent gain over any of these, but no loss either.

\subsection{Nonholonomic vertical disc (unicycle) and elastic spring}
As a first example of our integrators in the Lie group setting we consider the simple example of a vertical disc subjected to a harmonic potential which can be thought of as an elastic spring binding it to the origin. In this case $Q = SE(2)$ and the Lagrangian and constraint functions are:
\begin{gather*}
L(x, y, \theta, v_x, v_y, v_{\theta}) = \frac{1}{2}\left[m (v_x^2 + v_y^2) + I_z v_{\theta}^2\right] - \frac{1}{2}(x^2 + y^2),\\
\Phi(x, y, \theta, v_x, v_y, v_{\theta}) = v_y \cos \theta - v_x \sin \theta.
\end{gather*}

These can be left-trivialized so that our velocity phase space becomes $SE(2) \times \mathfrak{se}(2)$:
\begin{gather*}
\ell(x, y, \theta, v_1, v_2, \omega) = \frac{1}{2}\left[m (v_1^2 + v_2^2) + I_z \omega^2\right] - \frac{1}{2}(x^2 + y^2),\\
\phi(x, y, \theta, v_1, v_2, \omega) = v_2.
\end{gather*}

For the discretization, the $\mathrm{cay}$ map was used. As it can be seen in fig. \ref{fig:uni_Order_plots}, the numerical order obtained coincides with the expected one.
\begin{figure}[!h]
\centering
\includegraphics[width=6.2cm, clip=true, trim=40mm 90mm 40mm 90mm]{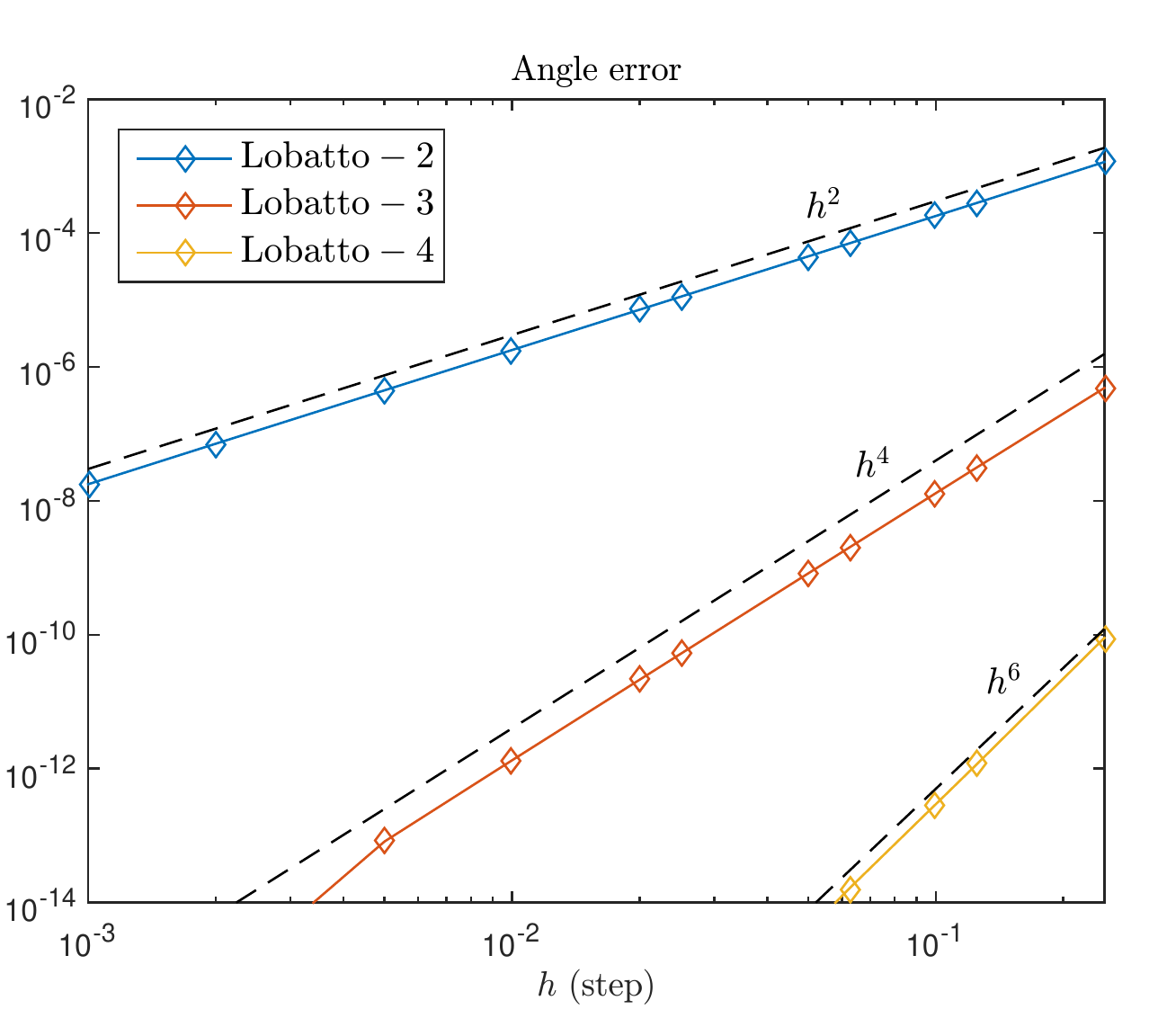}
\begin{subfigure}{.5\textwidth}
  \centering
  \includegraphics[width=6.2cm, clip=true, trim=40mm 90mm 40mm 90mm]{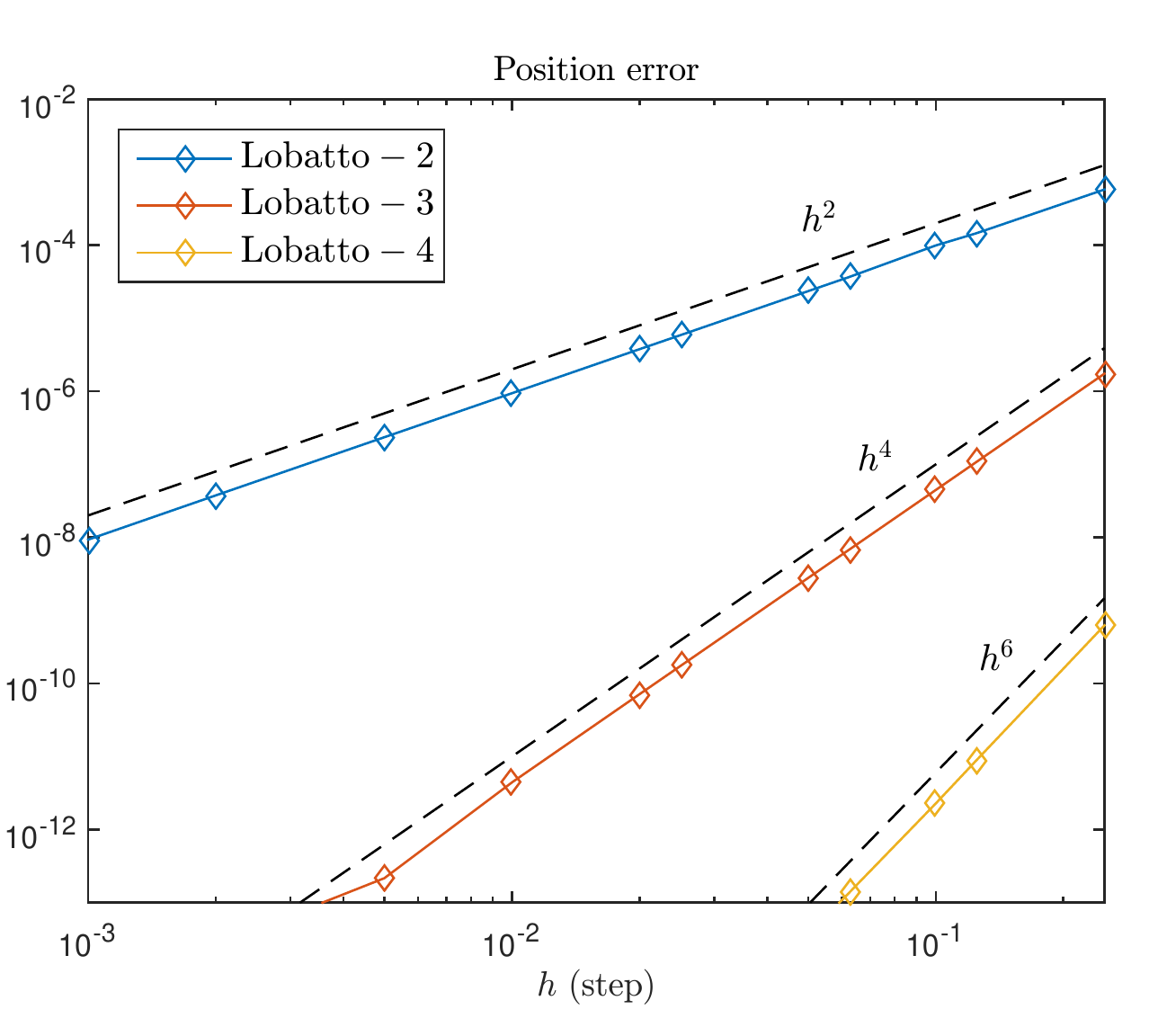}
  \includegraphics[width=6.2cm, clip=true, trim=40mm 90mm 40mm 90mm]{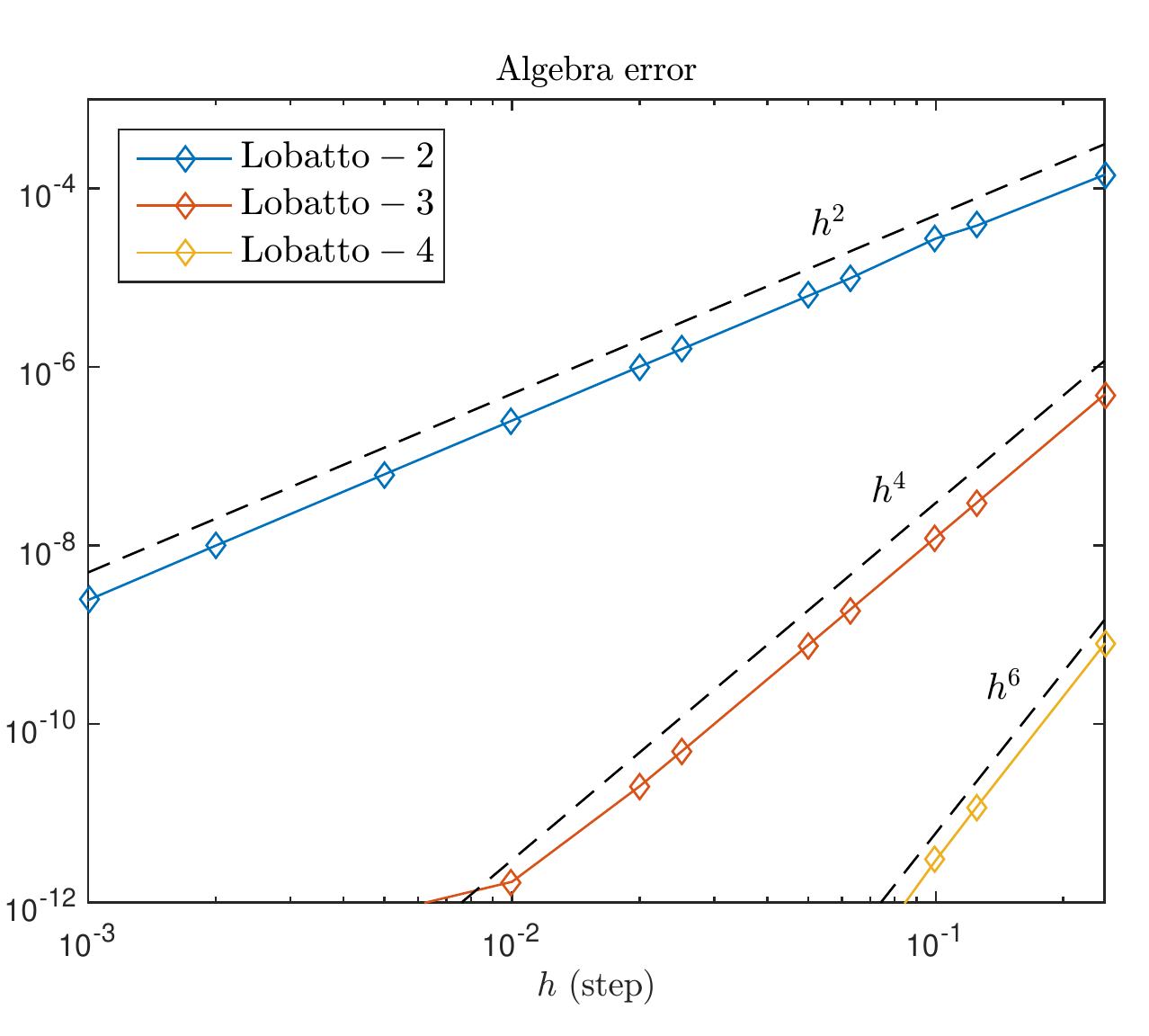}
\end{subfigure}%
\begin{subfigure}{.5\textwidth}
  \centering
  \includegraphics[width=6.2cm, clip=true, trim=40mm 90mm 40mm 90mm]{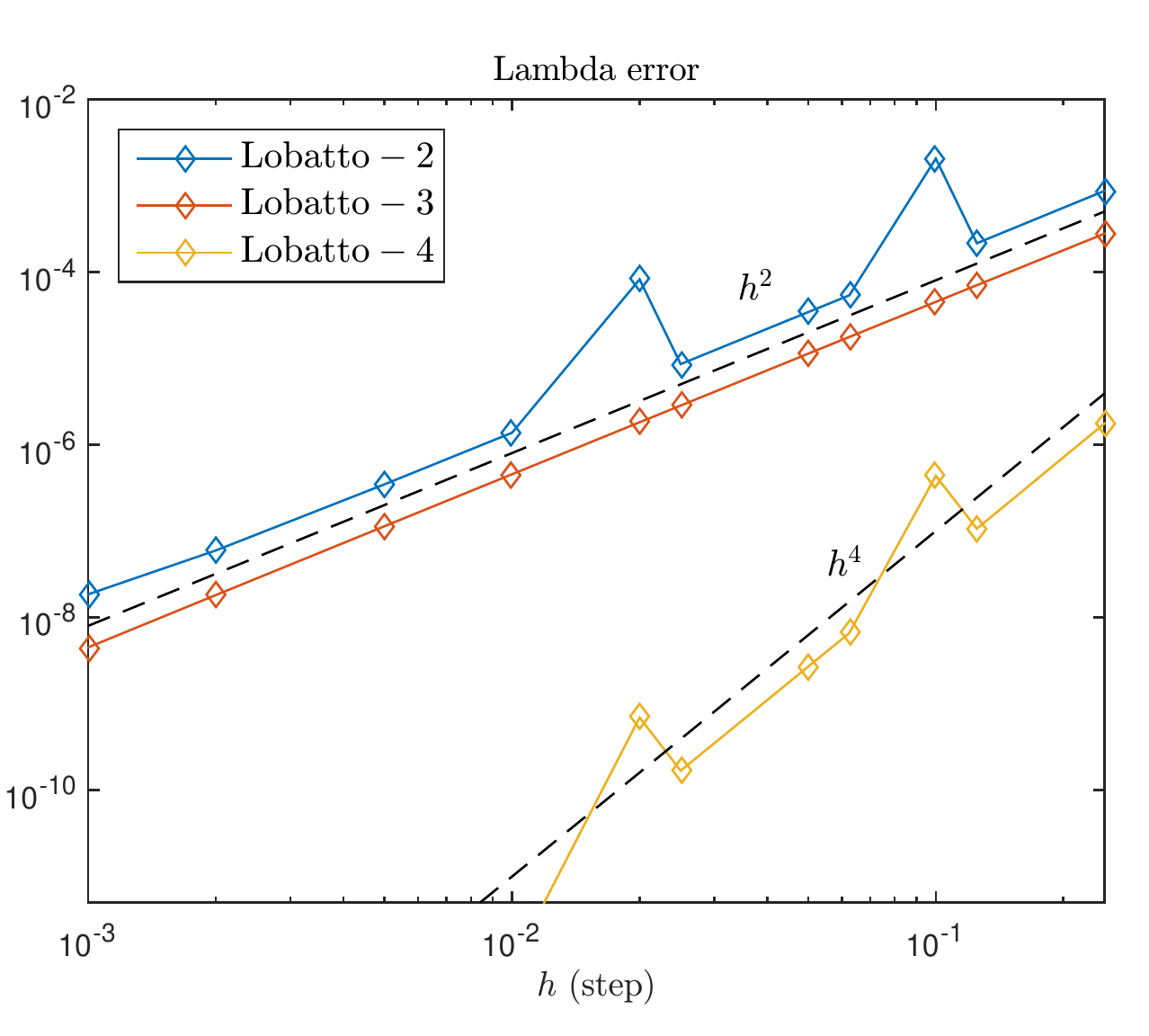}
  \includegraphics[width=6.2cm, clip=true, trim=40mm 90mm 40mm 90mm]{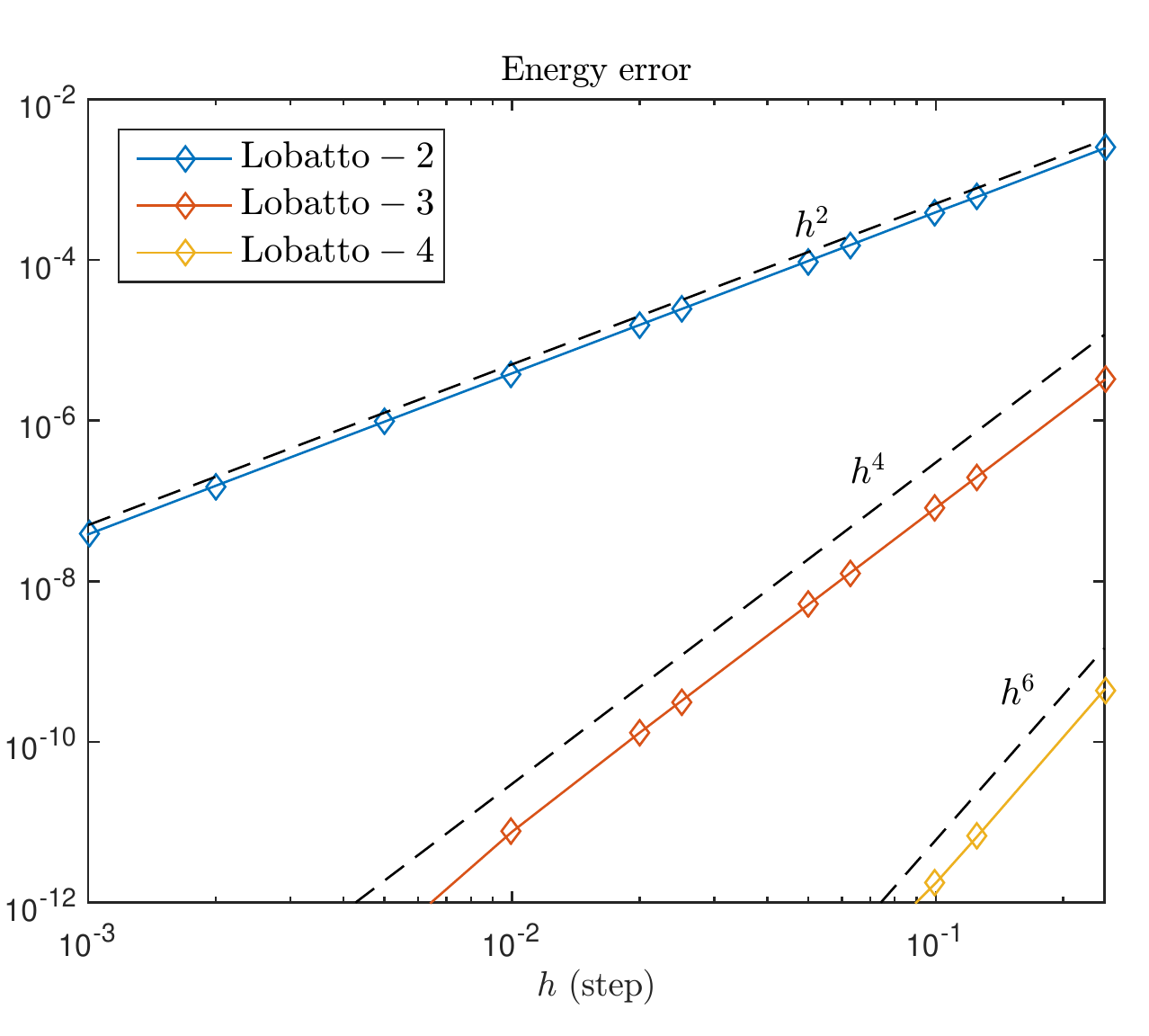}
\end{subfigure}
\caption{\scriptsize{Relative error w.r.t. reference values obtained for $h = \texttt{1e-4}$ for integrators of various orders. As can be seen, the behaviour of the Lagrange multipliers differs from the other variables, as predicted.}}
\label{fig:uni_Order_plots}
\end{figure}

\subsection{Nonholonomic ball on a turntable}
As a second and final example in the Lie group setting we consider the classic example of a ball rolling without slipping on a turntable that rotates with constant angular velocity. In this case $Q = SO(3) \times \mathbb{R}^2$ and the left-trivialized Lagrangian and constraint functions are:
\begin{gather*}
\ell(\phi, \theta, \psi, x, y, \omega_{\xi}, \omega_{\eta}, \omega_{\zeta}, v_x, v_y) = \frac{1}{2} \left(v_x^2 + v_y^2\right) + \frac{r^2}{2} \left(a \omega_{\xi}^2 + b \omega_{\eta}^2 + c \omega_{\zeta}^2\right),\\
\phi_1(\phi, \theta, \psi, x, y, \omega_{\xi}, \omega_{\eta}, \omega_{\zeta}, v_x, v_y) = v_x + \Omega y - r \omega_{\eta},\\
\phi_2(\phi, \theta, \psi, x, y, \omega_{\xi}, \omega_{\eta}, \omega_{\zeta}, v_x, v_y) = v_y - \Omega x + r \omega_{\xi},
\end{gather*}
where $\xi, \eta, \zeta$ are the principal axes of the ball and $a, b$ and $c$ are rescaled moments of inertia.
\begin{figure}[!h]
\centering
\begin{subfigure}{.5\textwidth}
  \centering
  \includegraphics[width=6.25cm, clip=true, trim=38mm 90mm 40mm 90mm]{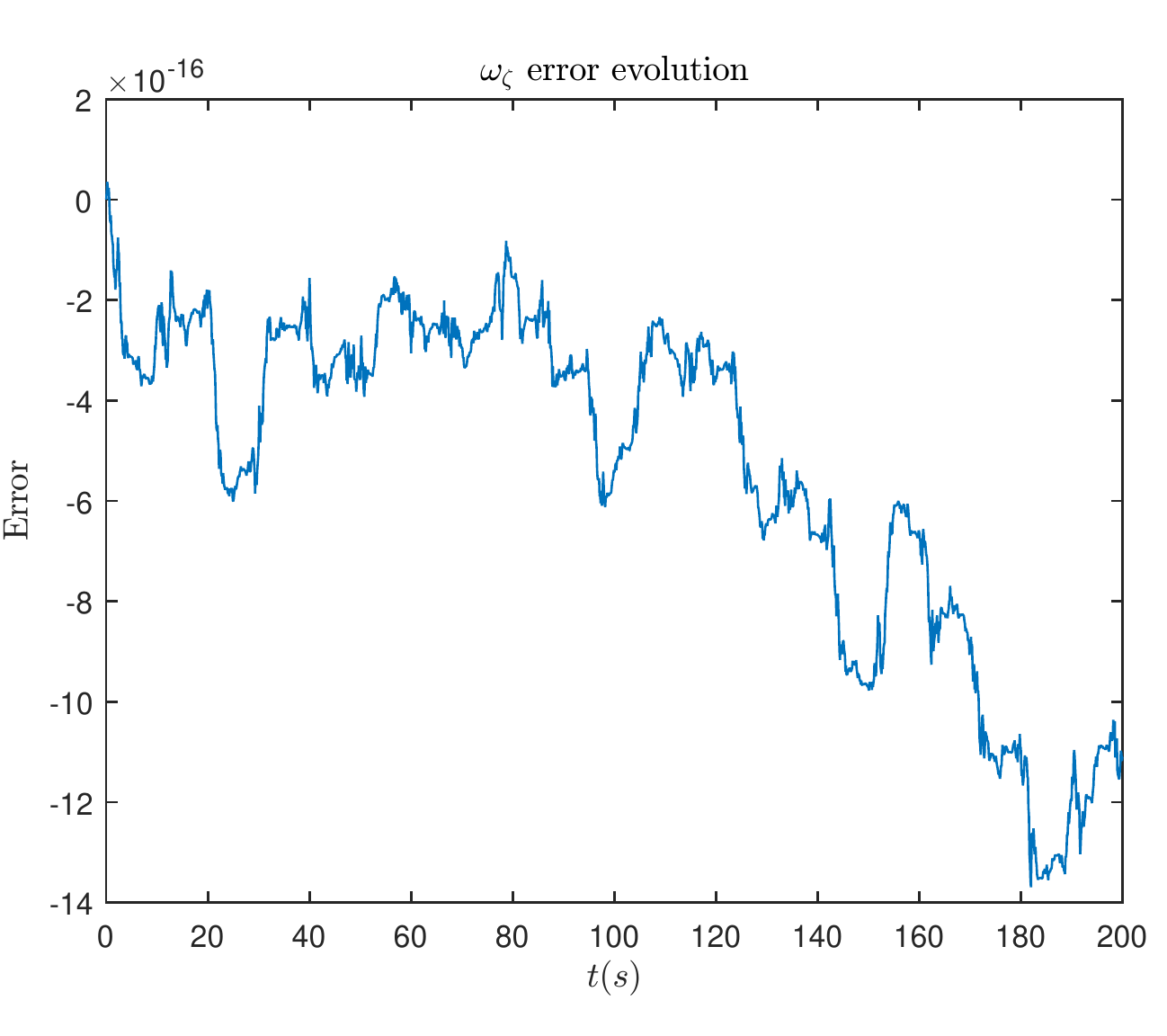}
  \includegraphics[width=6.25cm, clip=true, trim=38mm 90mm 40mm 90mm]{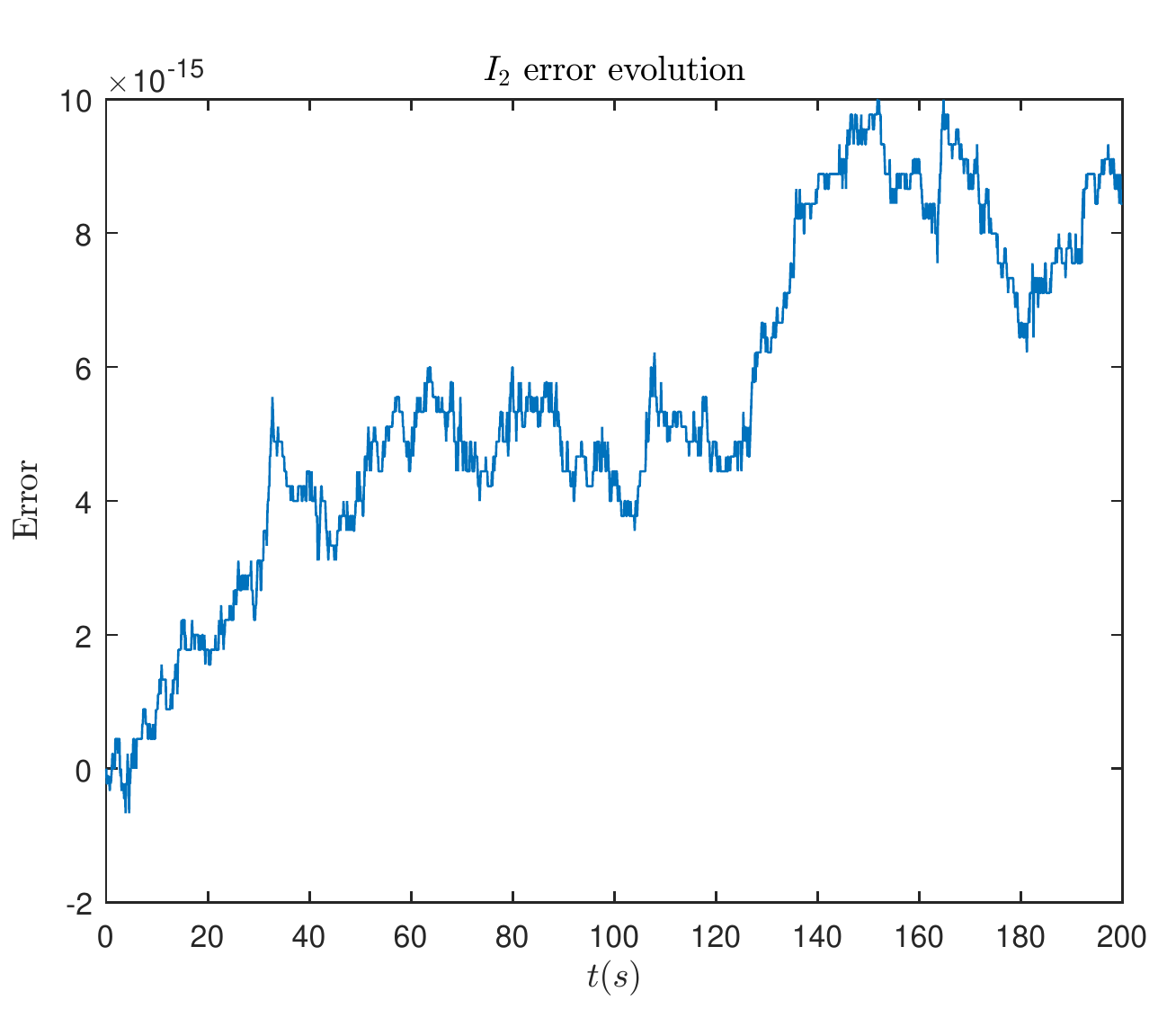}
\end{subfigure}%
\begin{subfigure}{.5\textwidth}
  \centering
  \includegraphics[width=6.25cm, clip=true, trim=38mm 90mm 40mm 90mm]{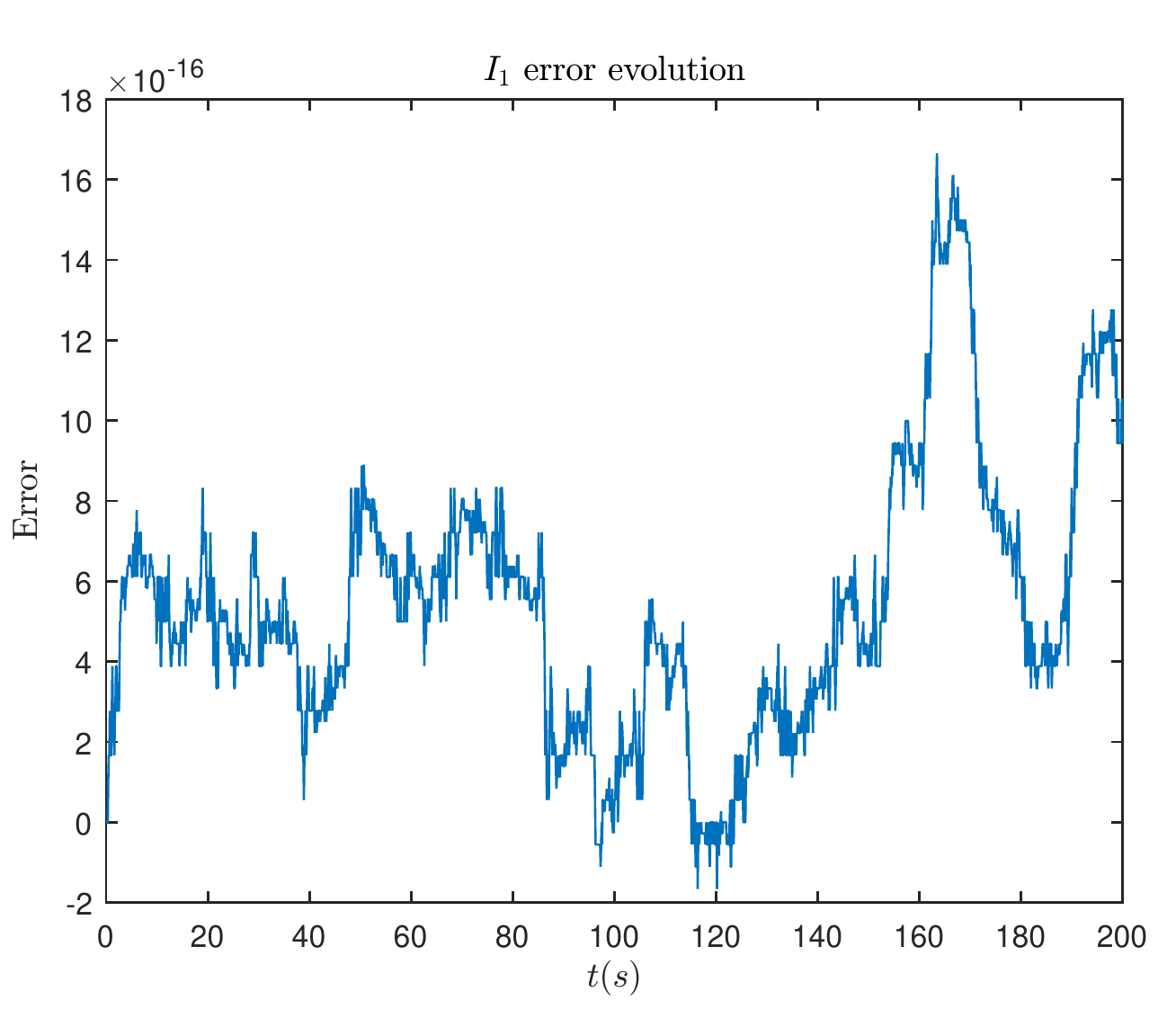}
  \includegraphics[width=6.25cm, clip=true, trim=38mm 90mm 40mm 90mm]{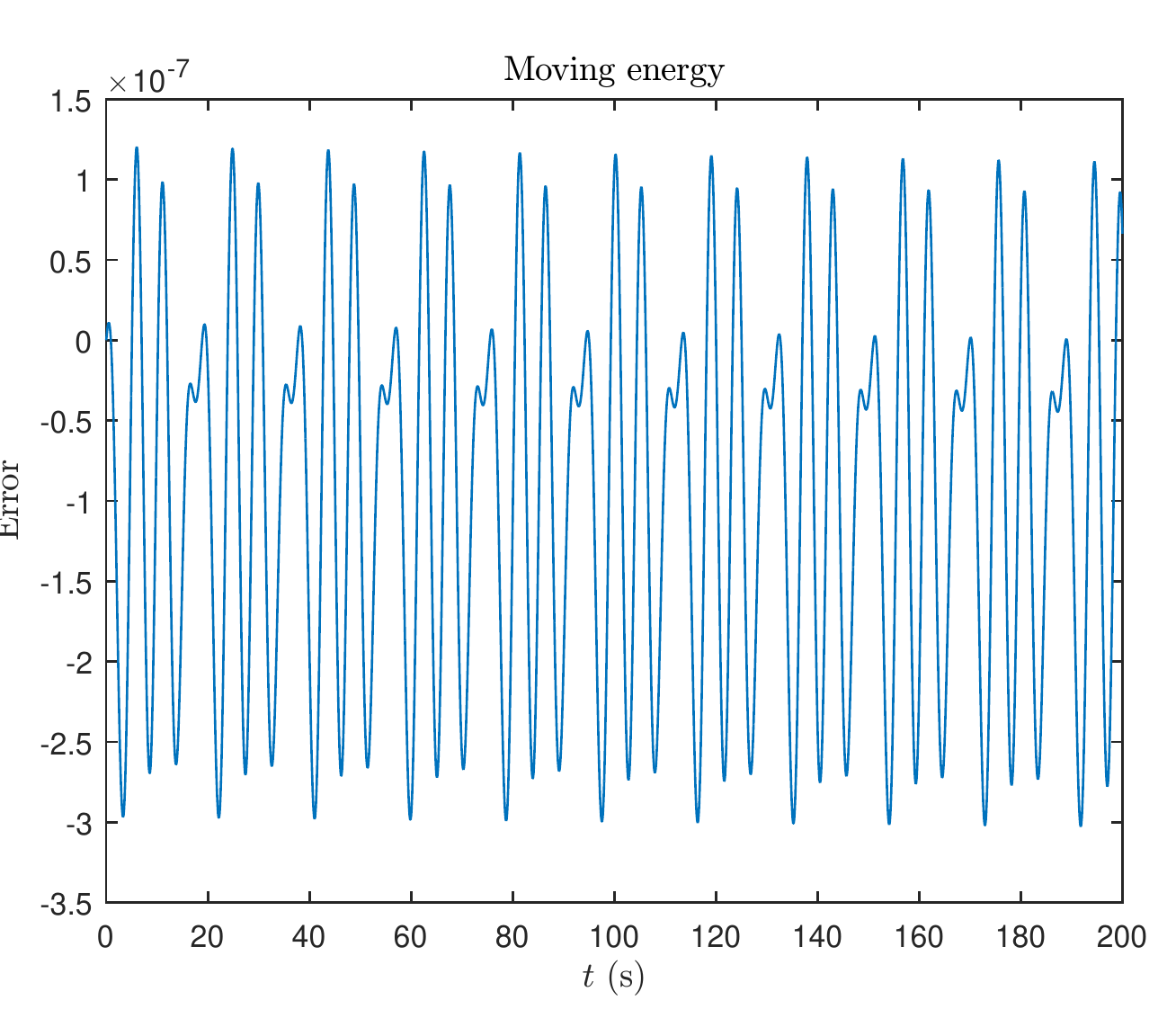}
\end{subfigure}
\caption{\scriptsize{Error evolution of the four first integrals. As noted, for the first three the error is introduced by the solver and is in fact lower than the tolerance set for the chosen one (\texttt{fsolve} in MATLAB with \texttt{TolX = 1e-12}). The behaviour of the moving energy is similar to that of the energy of a regular holonomic system. The simulation corresponds to a Lobatto-3 implementation with $\tau = \mathrm{cay}$.}}
\label{fig:ball_integrals}
\end{figure}

The homogeneous case, where $a = b = c$, is of special interest as it displays periodic motion, implying the existence of $\dim Q - 1 = 4$ first integrals \cite{fasso1,MR3784987}. Following three,
\begin{equation*}
\omega_{\zeta}, \quad r \omega_{\xi} - \frac{\Omega}{1 + a} x, \quad r \omega_{\eta} - \frac{\Omega}{1 + a} y,
\end{equation*}
are preserved by the integrator, but unless carefully implemented the numerical solver will introduce error (see fig. \ref{fig:ball_integrals}). The fourth, dubbed the \emph{moving energy} of the ball,
\begin{equation*}
\frac{1}{2} \left(v_x^2 + v_y^2\right) + \frac{a r^2}{2} \left(\omega_{\xi}^2 + \omega_{\eta}^2 + \omega_{\zeta}^2\right) + r \Omega \left( x \omega_{\xi} + y \omega_{\eta} \right) - \Omega^2\left(x^2 + y^2\right)
\end{equation*}
displays the sort of behaviour one expects from the energy of a holonomic system.
\section{Conclusions and future work}
In this paper we have introduced a new class of high-order geometric numerical integrators for nonholonomic systems defined on a vector bundle and on a Lie group. Moreover, we have tested its performance in some interesting and relevant examples. 

In future papers, we will study its relation to the nonholonomic Hamilton-Jacobi equation and we will explore its extension to other interesting problems related with optimal control theory, nonholonomic systems subjected to additional external forces and also the discretisation of thermodynamical systems.

 \section*{Aknowledgements}
The authors have been partially supported by Ministerio de Econom\'ia, Industria y Competitividad (MINEICO, Spain) under grants MTM 2013-42870-P, MTM 2015-64166-C2-2P, MTM2016-76702-P and ``Severo Ochoa Programme for Centres of Excellence'' in R\&D (SEV-2015-0554).

 \bibliographystyle{unsrt}
 \bibliography{References}

\appendix
\section{Appendix. Brief introduction to the numerical solution of systems of ODEs}\label{sec:app_num_sol}
Assume we are interested in solving the following generic initial value problem (IVP) numerically:
\begin{equation}
\left\lbrace
\begin{array}{rl}
\dot{y}(t) &= f(t,y(t))\\
y(t_0) &= y_0
\end{array}
\right.
\label{eq:ivproblem}
\end{equation}
where $y(t) \in \mathbb{R}^n$.

The exact (or analytic) solution of problem \eqref{eq:ivproblem} is a mapping (a flow) $\Phi: I \times \mathbb{R}^n \to \mathbb{R}^n$, where $I \subseteq \mathbb{R}$, such that $y(t) = \Phi(t) y_0, \forall t \in I$.

A numerical solution of problem \eqref{eq:ivproblem} is a mapping $\Phi_h: y_0 \mapsto y_1$, where $y_1 \approx y(t_0 + h)$. We say the order of approximation of our numerical solution is $p$ if, as $h \to 0$, it satisfies:
\begin{equation*}
y_1 - y(t_0 + h) = \mathcal{O}(h^{p+1})
\end{equation*}

We are going to take an interest in methods based on polynomial interpolation rules and more specifically, in collocation type methods. These methods consist on finding a polynomial whose derivative at certain interpolation nodes (\emph{collocation points}) coincides with the vector field $f$ of the problem (see \cite{MR0280008, MR1387137, HaNoWa93}).

\subsection{Continuous collocation}
An $s$-stage continuous collocation polynomial $u(t)$ (of degree $s$) for problem \eqref{eq:ivproblem} must satisfy (see \cite{hairer}):
\begin{equation}
\left\lbrace
\begin{array}{rl}
u(t_0) &= y_0\\
\dot{u}(t_0 + c_i h) &= f(t_0 + c_i h, u(t_0 + c_i h)), \quad i = 1, ..., s
\end{array}
\right.
\label{eq:continuous_collocation}
\end{equation}
where $c_i$ are distinct real numbers. $s$ is called the \emph{number of steps} of the collocation polynomial. Using Lagrange interpolation the polynomial must be such that:
\begin{equation}
\dot{u}(t) = \sum_{j = 1}^{s} k_j \ell_j(t) = \sum_{j = 1}^{s} f(t_0 + c_j h, u(t_0 + c_j h)) \ell_j(t)
\label{eq:continuous_collocation_polynomial}
\end{equation}
where $\ell_j (t)$ is the $j$-th element of the Lagrange basis of dimension $s$. Thus each element of this basis is a polynomial of degree $s-1$.

The most well-known and widely used methods of this kind are Gau{\ss}, Radau and Lobatto methods. We will be focusing on the latter for reasons that will become clear later.

A Lobatto continuous collocation polynomial has the highest order possible subject to the condition $c_1 = 0, c_s = 1$, i.e. they must include the endpoints as interpolation nodes. Said interpolation nodes are the zeros of the polynomial $x (x - 1) \mathcal{J}^{(1,1)}_{s-2}(2 x - 1)$, where $\mathcal{J}^{(\alpha,\beta)}_{n}(x)$ is a Jacobi polynomial. They are also symmetric, meaning $\Phi_h^{-1} = \Phi_{-h}$, which warrants that their order is even. Its quadrature order is $p = 2 s - 2$, and its lowest order member is the implicit trapezoidal rule.

One of the most interesting features of continuous collocation methods is that they provide us with a continuous approximation of the solution between $t_0$ and $t_1 = t_0 + h$, namely the interpolation polynomial $u(t)$, instead of just a discrete set of points. This polynomial is an approximation of order $s$ to the exact solution (\cite{hairer}, Lemma 1.6, p.33), i.e.:
\begin{equation}
\left\Vert u(t) - y(t)\right\Vert \leq C h^{s+1} \quad \forall t \in [t_0, t_0 + h]
\label{eq:continuous_degree_of_approximation}
\end{equation}
and for sufficiently small $h$.

Moreover, the approximation at quadrature points is of order $p$ (immediate consequence of \cite{hairer}, Theorem 1.5, p.32).

\subsection{Discontinuous collocation}
An $s$-stage discontinuous collocation polynomial $u(t)$ for problem \eqref{eq:ivproblem} is a polynomial of degree $s - 2$ satisfying (see \cite{hairer}):
\begin{equation}
\left\lbrace
\begin{array}{rl}
u(t_0) &= y_0 - h b_1(\dot{u}(t_0) - f(t_0,u(t_0)))\\
\dot{u}(t_0 + c_i h) &= f(t_0 + c_i h, u(t_0 + c_i h)), \quad i = 2, ..., s-1\\
y_1 &= u(t_1) - h b_s(\dot{u}(t_1) - f(t_0,u(t_1)))
\end{array}
\right.
\label{eq:discontinuous_collocation}
\end{equation}
where $t_1 = t_0 + h$, $b_1, b_s$ and $c_i$ are distinct real numbers. $s$ is called the \emph{number of steps} of the collocation polynomial. Using Lagrange interpolation the polynomial must be such that:
\begin{equation}
\dot{u}(t) = \sum_{j = 2}^{s-1} k_j \ell_{j-1}(t) = \sum_{j = 2}^{s-1} f(t_0 + c_j h, u(t_0 + c_j h)) \ell_{j-1}(t)
\label{eq:discontinuous_collocation_polynomial}
\end{equation}
where $\ell_j (t)$ is the $j$-th element of the Lagrange basis of dimension $s-2$. Thus each element of this basis is a polynomial of degree $s-3$.

Contrary to continuous methods the generated interpolation polynomial provides a poor continuous approximation of the solution (\cite{hairer}, Lemma 1.10, p.38), i.e.:
\begin{equation}
\left\Vert u(t) - y(t)\right\Vert \leq C h^{s-1} \quad \forall t \in [t_0, t_0 + h],
\label{eq:discontinuous_degree_of_approximation}
\end{equation}
and is better seen as providing a scaffolding from which to build an approximation of $y_1$. Again we consider Lobatto collocation polynomials, again subject to $c_1 = 0, c_s = 1$. These methods still provide an approximation of order $p = 2 s - 2$ for $y_1$ (\cite{hairer}, Theorem 1.9, p.37).

\subsection{Runge-Kutta methods}
The collocation methods discussed above can be seen separately as a particular instance of a Runge-Kutta method, completely defined by a set of coefficients $(a_{i j}, b_i, c_i)$, where $\sum_{j = 1}^s a_{i j} = c_i$. In particular, for continuous collocation methods (\cite{MR0280008,MR1387137}):
\begin{equation}
a_{i j} = \int_{0}^{c_i} \ell_j (\tau) \mathrm{d}\tau \quad b_{i} = \int_{0}^{1} \ell_i (\tau) \mathrm{d}\tau.
\label{eq:RK_coefficients}
\end{equation}

A numerical solution of \eqref{eq:ivproblem} can be found using an $s$-stage Runge-Kutta method with coefficients $(a_{i j}, b_{j})$ leading to:
\begin{equation}
\begin{array}{rl}
y_1 &= y_0 + h \sum_{j = 1}^s b_{j} k_j\\
Y_i &= y_0 + h \sum_{j = 1}^s a_{i j} k_j\\
k_i &= f(Y_i,Z_i)
\end{array}
\label{eq:RK_system}
\end{equation}

In order to analyse the properties of a given Runge-Kutta scheme it is useful to stablish a series of \emph{simplifying assumptions} that it satisfies:
\begin{align}\label{simpli}
&B(p): \sum_{i = 1}^s b_i c_i^{k-1} = \frac{1}{k} \quad \text{for } k = 1,..., p\\
&C(q): \sum_{j = 1}^s a_{i j} c_j^{k-1} = \frac{c_i^k}{k} \quad \text{for } i = 1, ..., s,\; k = 1,..., q\\
&D(r): \sum_{i = 1}^s b_i c_i^{k-1} a_{i j} = \frac{b_j (1 - c_j^k)}{k} \quad \text{for } j = 1, ..., s,\; k = 1,..., r
\end{align}
When referring to these assumptions for a Runge-Kutta method $(\hat{a}_{i j}, \hat{b}_i)$ we will write them as $\widehat{X}(\hat{y})$. 

Lastly there is a function associated to a Runge-Kutta method that we need to define. Consider the linear problem $\dot{y} = \lambda y$, and apply one step of the given method for an initial value $y_0$. The function $\mathcal{R}(z)$ defined by $y_1 = \mathcal{R}(h \lambda) y_0$ is the so-called stability function of the method.

For an arbitrary Runge-Kutta method we have that
\begin{equation*}
\mathcal{R}(z) = 1 + z b (\mathrm{Id} - z A)^{-1} \mathbbm{1},
\end{equation*}
where $A = \left(a_{i j}\right)$, $b = (b_1,...,b_s)$ and $\mathbbm{1} = (1,...,1)^T$. In the particular case of a method satisfying hypothesis \ref{itm:H3} this can be reduced to:
\begin{equation*}
\mathcal{R}(z) = e_s (\mathrm{Id} - z A)^{-1} \mathbbm{1},
\end{equation*}
where $e_i$ denotes an $s$ dimensional row vector whose entries are all zero except for its $i$-th entry which is 1.

\subsubsection{Partitioned Runge-Kutta methods}
Apart from the usual Runge-Kutta methods there exists a slightly more general class of methods called partitioned Runge-Kutta methods. These methods are of special relevance when the ODE system of problem \eqref{eq:ivproblem} can be partitioned, i.e. it has a natural partition of the form:
\begin{equation}
\left\lbrace
\begin{array}{rl}
\dot{y}(t) &= f(y(t),z(t))\\
\dot{z}(t) &= g(y(t),z(t))
\end{array}
\right.
\label{eq:partitioned_problem}
\end{equation}
Such is the case of problems derived from classical mechanics, where the phase space is usually $T^*Q$ which can be naturally partitioned at each point as $Q \times \mathbb{R}^n$.

These methods consist on applying different Runge-Kutta schemes to each part in order to take advantage of the structure of the problem. As such, a partitioned Runge-Kutta method is defined by a pair $(a_{i j}, b_i), (\hat{a}_{i j}, \hat{b}_i)$.

In the realm of mechanics a very important set of partitioned Runge-Kutta methods arises naturally from the application of the discrete Hamilton's principle. These are the so-called symplectic partitioned Runge-Kutta methods which manage to preserve the symplectic structure of the original problem. Such methods satisfy (\cite{hairer}, Theorem 4.6, p.193):
\begin{equation*}
\begin{array}{rlrl}
b_i \hat{a}_{i j} + \hat{b}_j a_{j i} &= b_i \hat{b}_j, & i, j &= 1,...,s\\
b_i &= \hat{b}_i, & i &= 1,...,s
\end{array}
\end{equation*}
Clearly each of the methods need not be symplectic in order for the partitioned method to be overall symplectic.

Note that if $(a_{i j}, b_i)$ and $(\hat{a}_{i j}, \hat{b}_i)$ are two symplectic conjugated methods, each satisfying the symplifying assumptions $B(p), C(q), D(r)$ and $\widehat{B}(\hat{p}), \widehat{C}(\hat{q}), \widehat{D}(\hat{r})$ then $\hat{p} = p$, $C(q)$ implies $\hat{r} = q$, and conversely $D(r)$ implies $\hat{q} = r$.

Apart from these, there are a few more simplifying assumptions that pairs of compatible methods satisfy (see \cite{Jay1996}):
\begin{align*}
&C\widehat{C}(Q): \sum_{j = 1}^s \sum_{l = 1}^s a_{i j} \hat{a}_{j l} c_l^{k-2} = \frac{c_i^k}{k (k - 1)} \quad \text{for } i = 1, ..., s,\; k = 2,..., Q\\
&D\widehat{D}(R): \sum_{i = 1}^s \sum_{j = 1}^s b_i c_i^{k-2} a_{i j} \hat{a}_{j l} = \frac{b_l}{k (k - 1)} \left[(k - 1) - (k c_l - c_l^k)\right]
&\tag*{for  $l = 1, ..., s$, $k = 2,..., R$\hspace{1.2cm}}
\end{align*}
\begin{align*}
&\widehat{C} C(\hat{Q}): \sum_{j = 1}^s \sum_{l = 1}^s \hat{a}_{i j} a_{j l} c_l^{k-2} = \frac{c_i^k}{k (k - 1)} \quad \text{for } i = 1, ..., s,\; k = 2,..., \hat{Q}\\
&\widehat{D} D(\hat{R}): \sum_{i = 1}^s \sum_{j = 1}^s \hat{b}_i c_i^{k-2} \hat{a}_{i j} a_{j l} = \frac{\hat{b}_l}{k (k - 1)} \left[(k - 1) - (k c_l - c_l^k)\right]
&\tag*{for  $l = 1, ..., s$, $k = 2,..., \hat{R}$\hspace{1.2cm}}
\end{align*}

It can be shown that if both methods are symplectic conjugated then, $Q = R = p - r$ and $\hat{Q} = \hat{R} = p - q$. In particular, Lobatto methods, which will be very important for us, satisfy $B(2 s - 2)$, $C(s)$, $D(s-2)$, $\widehat{B}(2 s - 2)$, $\widehat{C}(s-2)$, $\widehat{D}(s)$, as well as $C\widehat{C}(s), D\widehat{D}(s), \widehat{C}C(s-2), \widehat{D}D(s-2)$.

\section{Appendix. Brief introduction to order conditions}\label{sec:app_ord_conds}
Order conditions for a Runge-Kutta type method are derived by comparing the Taylor series of the exact solution of \eqref{eq:ivproblem} with the solution obtained via our numerical method. This spans a very rich theory developed by Butcher and others during the second half of last century, using tools such as rooted trees, Hopf algebras and group theory, but we will not be delving so deeply. For the interested reader, refer to books such us \cite{hairer} for an in-depth review.

Focusing on the autonomous case, the idea is to consider the exact solution $y(t), t \in [t_0, t_0 + h]$ of:
\begin{equation}
\left\lbrace
\begin{array}{rl}
\dot{y}(t) &= f(y(t))\\
y(t_0) &= y_0
\end{array}
\right.
\label{eq:autonomous_ivproblem}
\end{equation}

Our main goal is to compute the Taylor expansion of $y(t_0 + h)$ in powers of $h$ and compare same order terms. We can compute higher derivatives by inserting our solution in $f$ and recursively using the chain rule as:
\begin{align*}
\dot{y} &= f(y)\\
\ddot{y} &= f'(y)\dot{y}\\
{y}^{(3)} &= f''(y)(\dot{y},\dot{y}) + f'(y)\ddot{y}\\
&...
\end{align*}
after which we eliminate all derivatives from the right-hand side, starting from the top, by inserting the preceding formulas:
\begin{align*}
\dot{y} &= f\\
\ddot{y} &= f' f\\
{y}^{(3)} &= f''(f,f) + f' f' f\\
&...\nonumber
\end{align*}

Each term on the right-hand side has a rooted-tree representation, denoted by $F(\tau)$ (see Hairer, Lubich, Wanner), and each comes multiplied by certain combinatorial coefficients $\alpha(\tau)$ (all of them 1 in the examples shown). This leads to a neat and compact expression for all derivatives as:
\begin{equation}
y^{(q)}(t_0) = \sum_{\vert \tau \vert = q} \alpha(\tau) F(\tau) (y_0)
\label{eq:exact_solution_expansion}
\end{equation}

We may also do this sort of expansion with our numerical method:
\begin{equation*}
g_i = h f(u_i)
\end{equation*}
\begin{equation*}
u_i = y_0 + \sum_j a_{i j} g_j, \quad y_1 = y_0 + \sum_j b_j g_j.
\end{equation*}
where we would obtain, using $\left.g_i^{(q)}\right\vert_{h = 0} = q \cdot (f(u_i))^{q-1}$:
\begin{align*}
\dot{g}_i &= 1 \cdot (f(y_0))\\
\ddot{g}_i &= 2 \cdot (f'(y_0)\dot{u}_i)\\
{g}^{(3)}_i &= 3 \cdot (f''(y_0)(\dot{u}_i,\dot{u}_i) + f'(y)\ddot{u}_i)\\
&...
\end{align*}
Using the derivatives of $u_i^{(q)} = \sum_{j} a_{i j} g_j^{(q)}$ and successively substituting derivatives in the right-hand side again, we are left with expressions:
\begin{align*}
\dot{g}_i &= 1 \cdot (f)\\
\ddot{g}_i &= 2 \cdot \left(\sum_j a_{i j} f' f \right)\\
{g}^{(3)}_i &= 3 \cdot \left( \sum_{j k} a_{i j} a_{i k} f''(f, f) + 2 \cdot \sum_{j k} a_{i j} a_{j k} f' f' f\right)\\
&...
\end{align*}

Again this leads to compact expressions:
\begin{align}
\left.u_i^{(q)}(t_0)\right\vert_{h = 0} &= \sum_{\vert \tau \vert = q} \gamma(\tau) \cdot \mathbf{u}_i(\tau) \cdot \alpha(\tau) F(\tau) (y_0)\label{eq:inner_numerical_solution_expansion}\\
\left.g_i^{(q)}(t_0)\right\vert_{h = 0} &= \sum_{\vert \tau \vert = q} \gamma(\tau) \cdot \mathbf{g}_i(\tau) \cdot \alpha(\tau) F(\tau) (y_0)\nonumber
\end{align}
where $\mathbf{u}_i, \mathbf{g}_i$ are the factors containing terms in $a_{i j}$, and $\gamma(\tau)$ are the integer coefficients appearing in each term.

Finally, using the notation:
\begin{equation*}
\phi(\tau) = \sum_i b_i \mathbf{g}_i(\tau)
\end{equation*}
the derivatives of the numerical solution become:
\begin{equation}
\left.y_1^{(q)}\right\vert_{h = 0} = \sum_{\vert \tau \vert = q} \gamma(\tau) \cdot \phi(\tau) \cdot \alpha(\tau) F(\tau) (y_0)
\label{eq:numerical_solution_expansion}
\end{equation}

Clearly, (\cite{hairer}, Theorem III.1.5, p.56) the method will be of order $p$ iff:
\begin{equation}
\phi(\tau) = \frac{1}{\gamma(\tau)} \quad \text{for }\vert \tau\vert \leq p.
\label{eq:order_condition}
\end{equation}

Something similar can be done for inner points of a method. It is easy to see that if we consider $y(t) = y(t_0 + c h), c \in [0,1]$ then \eqref{eq:exact_solution_expansion} still holds with slight modifications:
\begin{equation}
y_{[i]}^{(q)}(t_0) = c_i^q \cdot \left( \sum_{\vert \tau \vert = q} \alpha(\tau) F(\tau) (y_0) \right)
\label{eq:inner_exact_solution_expansion}
\end{equation}

\begin{proposition}
The order of approximation of the $i$-th inner node, $y_{[i]}$, of a method will be $k$ iff:
\begin{equation}
\mathbf{u}_i(\tau) = \frac{c_i^k}{\gamma(\tau)}, \quad \text{for }\vert \tau\vert \leq k;
\label{eq:inner_order_condition}
\end{equation}
with $\mathbf{u}_i(\tau) = \sum_j a_{i j} \mathbf{g}_j(\tau)$
\end{proposition}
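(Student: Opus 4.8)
The plan is to transcribe the proof of the nodal order condition \eqref{eq:order_condition} (\cite{hairer}, Theorem III.1.5) to the internal stages, comparing term by term the $h$-expansion of the internal stage $u_i$ against that of the exact solution sampled at the intermediate time $t_0 + c_i h$. By definition the $i$-th inner node has order of approximation $k$ precisely when $u_i - y(t_0 + c_i h) = \mathcal{O}(h^{k+1})$, so it suffices to show that the Taylor coefficients in $h$ of these two quantities coincide up to and including order $k$ if and only if the claimed identities on $\mathbf{u}_i(\tau)$ hold.

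First I would expand both objects about $h = 0$. Writing $y_{[i]}(h) = y(t_0 + c_i h)$, the chain rule together with \eqref{eq:exact_solution_expansion} yields the order-$q$ coefficient \eqref{eq:inner_exact_solution_expansion}, namely $\left.y_{[i]}^{(q)}\right\vert_{h=0} = c_i^{q} \sum_{\vert\tau\vert = q} \alpha(\tau) F(\tau)(y_0)$, the single extra factor $c_i^{q}$ being all that distinguishes this from the nodal case. For the internal stage, \eqref{eq:inner_numerical_solution_expansion} gives $\left.u_i^{(q)}\right\vert_{h=0} = \sum_{\vert\tau\vert=q} \gamma(\tau)\,\mathbf{u}_i(\tau)\,\alpha(\tau) F(\tau)(y_0)$, where $\mathbf{u}_i(\tau) = \sum_j a_{ij}\mathbf{g}_j(\tau)$ is read off directly from $u_i = y_0 + \sum_j a_{ij} g_j$. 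Subtracting, the order-$q$ coefficient of the local error $u_i - y(t_0 + c_i h)$ equals $\sum_{\vert\tau\vert=q}\bigl(\gamma(\tau)\mathbf{u}_i(\tau) - c_i^{\vert\tau\vert}\bigr)\alpha(\tau) F(\tau)(y_0)$.

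Next I would invoke the linear independence of the elementary differentials: over the class of smooth vector fields $f$ the objects $F(\tau)(y_0)$ attached to distinct rooted trees are independent, so an order-$q$ coefficient vanishes for all admissible $f$ if and only if each bracket $\gamma(\tau)\mathbf{u}_i(\tau) - c_i^{\vert\tau\vert}$ vanishes separately. Since $\alpha(\tau) \neq 0$, this is equivalent to $\mathbf{u}_i(\tau) = c_i^{\vert\tau\vert}/\gamma(\tau)$ for every tree with $\vert\tau\vert = q$; imposing this for all $q \le k$ gives exactly the stated conditions for $\vert\tau\vert \le k$ and, conversely, forces every order-$q$ coefficient of the error to vanish for $q \le k$, i.e. $u_i - y(t_0 + c_i h) = \mathcal{O}(h^{k+1})$. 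The two implications together establish the equivalence.

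The main obstacle is the independence step, which licenses the passage from the analytic equality of the two $h$-series to the algebraic, tree-by-tree equality of their coefficients; this is the one nonelementary ingredient, supplied by the Butcher rooted-tree formalism. Everything else is a faithful copy of the nodal argument with the bookkeeping factor $c_i^{q}$ carried through, so no genuinely new computation is required.
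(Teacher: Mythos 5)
Your argument is exactly the paper's proof: sufficiency by direct comparison of the order-$q$ coefficients in \eqref{eq:inner_exact_solution_expansion} and \eqref{eq:inner_numerical_solution_expansion}, and necessity from the linear independence of the elementary differentials $F(\tau)$ as in Hairer et al., Theorem III.1.5 — you have simply written out the bookkeeping that the paper leaves implicit. Your form $\mathbf{u}_i(\tau) = c_i^{\vert\tau\vert}/\gamma(\tau)$ is in fact the correct reading of the condition (the exponent must track the order of each tree, as the factor $c_i^{q}$ in \eqref{eq:inner_exact_solution_expansion} shows), so no further work is needed.
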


\begin{proof}
Direct comparison of each order in \eqref{eq:inner_exact_solution_expansion} and \eqref{eq:inner_numerical_solution_expansion} shows sufficiency. As in (\cite{hairer}, Theorem III.1.5, p.56), necessity comes from independence of each $F(\tau)$.
\end{proof}

These results (\eqref{eq:inner_order_condition} and \eqref{eq:order_condition}) are directly related to the so-called \emph{simplifying assumptions} satisfied by collocation methods:
\begin{equation}\label{simplifying}
\begin{array}{rl}
B(p): & \sum_{i = 1}^s b_i c_i^{k-1} = \dfrac{1}{k}, \quad k = 1, ..., p; \;\\
C(q): & \sum_{j = 1}^s a_{i j} c_j^{k-1} = \dfrac{c^k_i}{k}, \quad \forall i; \; k = 1, ..., q.
\end{array}
\end{equation}

In particular, for Lobatto methods, the so called Lobatto IIIA (continuous) satisfies $B(2 s - 2), C(s)$, whereas, Lobatto IIIB (discontinuous) satisfies $B(2 s - 2), C(s - 2)$. This means that the inner values calculated by the former method are an approximation of order $s$ to the exact values, while they are of order $s - 2$ for the latter. Still, the order of approximation of $y_1$ is $2 s - 2$ for both, i.e., their quadrature order.

\begin{proposition}\label{prop:continuous_approximation_mixed}
Let $(a_{i j}, b_j)$ and $(\hat{a}_{i j}, \hat{b}_j)$ be the coefficients of the Lobatto IIIA-B pair. Let us solve \eqref{eq:ivproblem}, with $f$ Lipschitz, and denote the resulting interpolation polynomial by $u(t)$. Then for sufficiently small $h$ the polynomial:
\begin{equation*}
v(t) = y_0 + h \sum_{j = 1}^{s} f(t_0 + c_j h, u(t_0 + c_j h)) \int_{t_0}^{t} \ell_j(\tau) \mathrm{d}\tau
\end{equation*}
is an approximation of order $s-1$ of the solution $y(t)$ in the interval $t \in [t_0, t0 + h]$, i.e.,:
\begin{equation*}
\left\Vert v(t) - y(t)\right\Vert \leq C h^{s} \quad \forall t \in [t_0, t_0 + h]
\end{equation*}
Moreover, the derivatives of $u(t)$ satisfy:
\begin{equation*}
\left\Vert v^{(k)}(t) - y^{(k)}(t)\right\Vert \leq C h^{s-k} \quad \forall t \in [t_0, t_0 + h]
\end{equation*}
\end{proposition}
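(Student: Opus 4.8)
The plan is to compare $v$ directly with the exact solution $y$ through their derivatives, exploiting that $\dot v$ is, by construction, an interpolation polynomial of the stage derivative data. Writing $\sigma = (t-t_0)/h$ and letting $\tilde\ell_j$ be the Lagrange basis of degree $s-1$ on $[0,1]$ at the nodes $c_j$, the definition of $v$ gives $v(t_0)=y_0$ and $\dot v(t) = \sum_{j=1}^{s} k_j\,\tilde\ell_j(\sigma)$ with $k_j := f(t_0+c_jh, u(t_0+c_jh))$; that is, $\dot v$ is the degree-$(s-1)$ polynomial interpolating $\{k_j\}$ at the nodes, so that evaluating $v$ at the nodes reproduces the IIIA stage combination $y_0 + h\sum_j a_{ij}k_j$ with the $a_{ij}$ of \eqref{eq:RK_coefficients}. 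Here $u$ is the discontinuous (Lobatto IIIB) collocation polynomial, so \eqref{eq:discontinuous_degree_of_approximation} provides $\|u(t)-y(t)\| \le C h^{s-1}$ on $[t_0,t_0+h]$, and in particular $\|u(t_0+c_jh) - y(t_0+c_jh)\| \le C h^{s-1}$ at each node.

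First I would introduce the comparison polynomial $\hat p$, the degree-$(s-1)$ interpolant of the \emph{exact} derivative values $g_j := \dot y(t_0+c_jh) = f(t_0+c_jh, y(t_0+c_jh))$ at the same nodes, and split
\begin{equation*}
\dot v(t) = \hat p(t) + \sum_{j=1}^{s} (k_j - g_j)\,\tilde\ell_j(\sigma).
\end{equation*}
The two terms are then estimated separately. For the defect, the Lipschitz hypothesis on $f$ combined with the IIIB bound above yields $\|k_j - g_j\| \le L\,C\,h^{s-1}$. For the interpolation term, assuming $f$ (hence $\dot y$) sufficiently smooth — consistent with the standing $C^k$ regularity of the paper — the classical interpolation remainder gives $\|\hat p^{(m)}(t) - \dot y^{(m)}(t)\| \le C h^{s-m}$ for $0 \le m \le s-1$, while differentiating the scaled basis contributes $\bigl\|\tfrac{d^m}{dt^m}\tilde\ell_j(\sigma)\bigr\| \le C h^{-m}$ by the chain rule.

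Combining these, for $k \ge 1$ I differentiate the decomposition $k-1$ times and use $y^{(k)} = \dot y^{(k-1)}$ to get
\begin{equation*}
\|v^{(k)}(t)-y^{(k)}(t)\| \le \|\hat p^{(k-1)}-\dot y^{(k-1)}\| + \sum_{j}\|k_j-g_j\|\,\Bigl\|\tfrac{d^{k-1}}{dt^{k-1}}\tilde\ell_j(\sigma)\Bigr\| \le C h^{s-k+1} + C h^{s-1}h^{-(k-1)} = \mathcal{O}(h^{s-k}),
\end{equation*}
where the stage-defect term dominates. For the function estimate ($k=0$) I integrate $\dot v - \dot y$ from $t_0$, using $v(t_0)=y(t_0)=y_0$: the interpolation part contributes $h\cdot\mathcal{O}(h^s) = \mathcal{O}(h^{s+1})$, and the defect part contributes $\sum_j\|k_j-g_j\|\cdot\mathcal{O}(h) = \mathcal{O}(h^{s-1})\cdot\mathcal{O}(h) = \mathcal{O}(h^s)$, giving $\|v(t)-y(t)\| \le C h^s$, i.e. order $s-1$ uniformly on $[t_0,t_0+h]$.

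I expect the main obstacle to be the careful bookkeeping of $h$-powers: making rigorous the interpolation-remainder derivative bounds $\|\hat p^{(m)}-\dot y^{(m)}\| = \mathcal{O}(h^{s-m})$ and the scaled-basis estimates $\bigl\|\tfrac{d^m}{dt^m}\tilde\ell_j(\sigma)\bigr\| = \mathcal{O}(h^{-m})$ under the correct normalization of the nodes, and reconciling the stated ``$f$ Lipschitz'' hypothesis (which is what controls the defect $k_j-g_j$) with the additional smoothness implicitly required for the interpolation estimates. It is worth noting that only the $\mathcal{O}(h^{s-1})$ accuracy of the discontinuous stage values enters this argument; the sharper order $s$ enjoyed by the mixed values $p_k^i$ at the nodes, stemming from $C\widehat{C}(s)$, is a superconvergence phenomenon at isolated points and lies beyond what this uniform interval estimate needs.
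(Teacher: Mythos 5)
Your proof is correct and follows essentially the same route as the paper's: both decompose $\dot v-\dot y$ into the interpolant-of-exact-nodal-derivatives remainder (of size $\mathcal{O}(h^{s})$) plus the stage defects $k_j-g_j$, bound the latter by $\mathcal{O}(h^{s-1})$ via the Lipschitz condition on $f$ and the discontinuous-collocation estimate \eqref{eq:discontinuous_degree_of_approximation}, and then integrate for the $k=0$ case and differentiate for $k\geq 1$. Your version is somewhat more explicit about the $h$-power bookkeeping for the derivative estimates (the $\mathcal{O}(h^{-m})$ growth of the scaled basis derivatives), which the paper compresses into a single sentence, and you correctly flag that the interpolation remainder silently requires more smoothness of $\dot y$ than the stated Lipschitz hypothesis — a point the paper's proof also glosses over.
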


\begin{proof}
Following by example as in \cite{hairer}, Lemma 1.6, and using the same notation, we may express the exact solution as:
\begin{equation*}
\dot{y}(t_0 + \tau h) = y_0 + h \sum_{j = 1}^{s} f(t_0 + c_j h, y(t_0 + c_j h)) \ell_j(\tau) + h^s E(\tau, h),
\end{equation*}

where the interpolation $E(\tau, h)$ is bounded by a constant $M$.

By integration of the difference $\dot{y}(t_0 + \tau h) - \dot{v}(t_0 + \tau h)$ we obtain:
\begin{equation}
y(t_0 + \tau h) - v(t_0 + \tau h) = h \sum_{j = 1}^{s} \delta f_j \int_{0}^{\tau} \ell_j(\sigma) \mathrm{d}\sigma + h^{s+1} \int_{0}^{\tau} E(\sigma, h) \mathrm{d}\sigma
\label{eq:lemma_main}
\end{equation}
where $\delta f_j = f(t_0 + c_j h, y(t_0 + c_j h)) - f(t_0 + c_j h, u(t_0 + c_j h))$.

Now, invoking the result of \cite{hairer}, Lemma 1.10:
\begin{equation*}
\left\Vert u(t) - y(t)\right\Vert \leq C h^{s - 1} \quad \forall t \in [t_0, t_0 + h]
\end{equation*}
we finally get that:
\begin{equation*}
\left\Vert y(t) - v(t)\right\Vert \leq h C \left\Vert y(t) - u(t)\right\Vert + h^{s+1} M \leq C L h^{s} + h^{s+1} M
\end{equation*}

Derivation of \eqref{eq:lemma_main} and further application of the same lemma proves the second statement.
\end{proof}

\section{Appendix. Continuous collocation on Lie groups}\label{sec:app_lie_group_collocation}
Let us begin with a general ODE on the Lie group $G$.
\begin{equation*}
\dot{g}(t) = \Phi(t, g(t))
\end{equation*}
This can be recast into a form which will be easier to work with:
\begin{equation*}
\dot{g}(t) = T_{e} L_{g(t)}\phi(t, g(t)) \equiv g(t) \phi(t, g(t))
\end{equation*}
where $\phi: I \times G \to \mathfrak{g}$, with $I = [t_0, t_0 + h]$. We wish to numerically solve an IVP for this ODE with $g(t_0) = g_0$ using an $s$-stage (continuous) collocation method. This means that we need to find an approximation of $g(t)$ which we will call $u(t)$, such that:
\begin{equation*}
\left\lbrace
\begin{array}{rcl}
u(t_0) & = & g_0\\
\dot{u}(t_0 + c_i h) & = & u(t_0 + c_i h) \phi(t_0 + c_i h, u(t_0 + c_i h)), \quad \forall i = 1, ..., s
\end{array}
\right.
\end{equation*}
where $c_i \in \mathbb{R}$ are the collocation coefficients of the method.

Similar to what we would do in the vector space case we begin with an ansatz of the form:
\begin{equation*}
u(t_0 + \lambda h) = g_0 \tau\left( h \sum_{j}^{s} \eta^j \int_0^\lambda \ell_j(\sigma) \mathrm{d}\sigma\right)
\end{equation*}
with $\ell_i(\sigma)$ the $i$-th element of the $s$-dimensional Lagrange basis associated with the $c_i$ coefficients and $\eta^i \in \mathfrak{g}$. This clearly satisfies the first condition.

Now we need to impose the second condition to find the $\eta$'s. Using $L_j(\lambda) = \int_0^\lambda \ell_j(\sigma) \mathrm{d}\sigma$, we get:
\begin{align*}
\dot{u}(t_0 + \lambda h) &= g_0 D \tau\left( h \sum_{j}^{s} \eta^j L_j(\lambda)\right) \left( \sum_{j}^{s} \eta^j \ell_j(\lambda)\right)\\
&= g_0 \tau\left( h \sum_{j}^{s} \eta^j L_j(\lambda)\right) \mathrm{d}^L \tau_{ h \sum_{j}^{s} \eta^j L_j(\lambda)} \left( \sum_{j}^{s} \eta^j \ell_j(\lambda)\right)\\
&= u(t_0 + \lambda h) \mathrm{d}^L \tau_{ h \sum_{j}^{s} \eta^j L_j(\lambda)} \left( \sum_{j}^{s} \eta^j \ell_j(\lambda)\right),
\end{align*}
thus:
\begin{equation}
\mathrm{d}^L \tau_{ h \sum_{j}^{s} \eta^j L_j(c_i)} \left( \sum_{j}^{s} \eta^j \ell_j(c_i)\right) = \phi\left(t_0 + c_i h, g_0 \tau\left( h \sum_{j}^{s} \eta^j L_j(c_i)\right)\right).
\label{eq:collocation_Lie_group}
\end{equation}

From Guillou \& Soul\'e we have the following relations between Runge-Kutta coefficients and collocation polynomials:
\begin{equation*}
a_{i j} = L_j(c_i) \quad \quad \quad b_{j} = L_j(1)
\end{equation*}
We also know that $\ell_j(c_i) = \delta_{i j}$, so  we can finally express eq.\eqref{eq:collocation_Lie_group} as:
\begin{equation}
\mathrm{d}^L \tau_{ h \sum_{j}^{s} a_{i j} \eta^j} \eta^i = \phi\left(t_0 + c_i h, g_0 \tau\left( h \sum_{j}^{s} a_{i j} \eta^j\right)\right).
\label{eq:Runge-Kutta_Lie_group}
\end{equation}

For its application to variational integrators we are interested in the case where $\phi(t, g(t))$ is the (left-)trivialised velocity of the system. If we introduce some auxiliary variables $\xi^i$ to simplify the expressions, we finally obtain:
\begin{align*}
\xi^i &= h \sum_{j = 1}^{s} a_{i j} \eta^j,\\
G^i &= g_0 \tau(\xi^i),\\
\left(G^i\right)^{-1} V^i &= \mathrm{d}^L \tau_{\xi^i} \eta^i,\\
g_1 &= g_0 \tau\left( h \sum_{j = 1}^{s} b_{j} \eta^j \right).
\end{align*}

\section{Summary: Discrete nonholonomic Lagrangian equations. Vector space}
\begin{equation*}
(q_k, v_k, \lambda_k = \Lambda_k^1) \in \left. TQ \right\vert_{N} \times \Lambda \mapsto (q_{k+1}, v_{k+1}, \lambda_{k+1} = \Lambda_k^s) \in \left. TQ \right\vert_{N} \times \Lambda
\end{equation*}
\begin{center}
	\begin{alignat*}{2}
		q_{k+1} & = q_k + h \sum_{i = 1}^{s} b_{i} V_k^i, & p_{k+1} & = p_k + h \sum_{i = 1}^{s} \hat{b}_{i} W_k^i,\\
		Q_k^i & = q_k + h \sum_{j = 1}^{s} a_{i j} V_k^j, & P_k^i & = p_k + h \sum_{j = 1}^{s} \hat{a}_{i j} W_k^j,\\
		W_k^i & = D_1 L(Q_k^i, V_k^i) + \left\langle\Lambda_k^i, D_2 \Phi(Q_k^i, V_k^i)\right\rangle \vphantom{\sum_{j = 1}^{s}}, & P_k^i & = D_2 L(Q_k^i, V_k^i) \vphantom{\sum_{j = 1}^{s}},\\
		q_k^i & = Q_k^i \vphantom{\sum_{j = 1}^{s}}, & p_k^i & = p_k + h \sum_{j = 1}^{s} a_{i j} W_k^j \vphantom{\sum_{j = 1}^{s}},\\
		p_k^i & = D_2 L(q_k^i, v_k^i), & p_k & = D_2 L(q_k, v_k) \vphantom{\sum_{j = 1}^{s}}
	\end{alignat*}
	\begin{equation*}
		\Phi(q_k^i, v_k^i) = 0 \vphantom{\sum_{j = 1}^{s}}.
	\end{equation*}
\end{center}

\section{Summary: Discrete nonholonomic Hamiltonian equations. Vector space}
\begin{equation*}
(g_k, p_k, \lambda_k = \Lambda_k^1) \in \left. T^*Q \right\vert_{M} \times \Lambda \mapsto (g_{k+1}, p_{k+1}, \lambda_{k+1} = \Lambda_k^s) \in \left. T^*Q \right\vert_{M} \times \Lambda
\end{equation*}
\begin{center}
	\begin{alignat*}{2}
		q_{k+1} & = q_k + h \sum_{i = 1}^{s} b_{i} V_k^i, & p_{k+1} & = p_k - h \sum_{i = 1}^{s} \hat{b}_{i} W_k^i,\\
		Q_k^i & = q_k + h \sum_{j = 1}^{s} a_{i j} V_k^j, & P_k^i & = p_k - h \sum_{j = 1}^{s} \hat{a}_{i j} W_k^j,\\
		V_k^i & = D_2 H(Q_k^i, P_k^i), & W_k^i & = D_1 H(Q_k^i, P_k^i) - \left\langle \Lambda_k^i, \flat_H\left( D_2 \Psi\right)(Q_k^i, P_k^i)\right\rangle , \vphantom{\sum_{j = 1}^{s}}\\
		q_k^i & = Q_k^i \vphantom{\sum_{j = 1}^{s}}, & p_k^i & = p_k - h \sum_{j = 1}^{s} a_{i j} W_k^j \vphantom{\sum_{j = 1}^{s}},
	\end{alignat*}
	\begin{equation*}
		\Psi(q_k^i, p_k^i) = 0 \vphantom{\sum_{j = 1}^{s}}.
	\end{equation*}
\end{center}

\section{Summary: Discrete nonholonomic Lagrangian equations. Lie group}
\begin{equation*}
(g_k, \eta_k, \lambda_k = \Lambda_k^1) \in \left. G \times \mathfrak{g}\right\vert_{N} \times \Lambda \mapsto (g_{k+1}, \eta_{k+1}, \lambda_{k+1} = \Lambda_k^s) \in \left. G \times \mathfrak{g}\right\vert_{N} \times \Lambda
\end{equation*}
\begin{align*}
\Xi_k^i &= \tau^{-1}\left(g_k^{-1} G_k^i\right) = h \sum_{j = 1}^s a_{i j} \mathrm{H}_k^j = \xi_{k}^i,\\
\xi_{k,k+1} &= \tau^{-1}\left(g_k^{-1} g_{k+1}\right) = h \sum_{j = 1}^s b_{j} \mathrm{H}_k^j,\\
\mathrm{M}_k^i &= \mathrm{Ad}_{\tau(\xi_{k,k+1})}^* \left[ \mu_{k} + h \sum_{j = 1}^s b_j \left( \mathrm{d}^L\tau^{-1}_{-\Xi_k^j} - \frac{a_{j i}}{b_i} \mathrm{d}^{L}\tau^{-1}_{-\xi_{k,k+1}}\right)^* \mathrm{N}_k^j\right],\\
\mu_{k}^i &= \mathrm{Ad}_{\tau(\Xi_{k}^i)}^* \left[ \mu_{k} + h \sum_{j = 1}^s a_{i j} \left(\mathrm{d}^L\tau^{-1}_{-\Xi_k^j}\right)^* \mathrm{N}_k^j\right],\\
\mu_{k+1} &= \mathrm{Ad}_{\tau(\xi_{k,k+1})}^* \left[ \mu_{k} + h \sum_{j = 1}^s b_j \left(\mathrm{d}^L\tau^{-1}_{-\Xi_k^j}\right)^* \mathrm{N}_k^j\right],\\
0 &= \phi\left(g_k \tau(\xi_k^i), \eta_{k}^i\right);
\end{align*}
where
\begin{align*}
\mathrm{N}_k^i &= \left(\mathrm{d}^L\tau_{\Xi_k^i}\right)^* \left[L_{g_k \tau(\Xi_k^i)}^* D_1 \ell\left(g_k \tau(\Xi_k^i), \mathrm{d}^{L}\tau_{\Xi_k^i} \mathrm{H}_k^i\right)\right.\\
& + \left.\left\langle \Lambda_k^i, D_2\phi\left(g_k \tau(\Xi_k^i), \mathrm{d}^L \tau_{\Xi_k^i} \mathrm{H}_k^i \right) \right\rangle\right],\\
\mathrm{M}_k^i &= \left(\mathrm{d}^{L}\tau^{-1}_{\xi_{k,k+1}}\right)^*\left[\mathrm{\Pi}_k^i + h \sum_{j = 1}^s \frac{b_j a_{j i}}{b_i} \left( \mathrm{dd}^{L}\tau_{\Xi_k^j} \right)^* \left( \mathrm{H}_k^j, \mathrm{\Pi}_k^j\right)\right],\\
\mathrm{\Pi}_k^i &= \left(\mathrm{d}^{L}\tau_{\Xi_k^i}\right)^* D_2 \ell\left(g_k \tau(\Xi_k^i), \mathrm{d}^{L}\tau_{\Xi_k^i} \mathrm{H}_k^i\right),\\
\mu_k^i &= D_2 \ell\left(g_k \tau(\xi_{k}^i), \eta_{k}^i\right),\\
\mu_k &= D_2 \ell\left(g_k, \eta_{k}\right).
\end{align*}
\newpage

\section{Summary: Discrete nonholonomic Hamiltonian equations. Lie group}
\begin{equation*}
(g_k, \mu_k, \lambda_k = \Lambda_k^1) \in \left. G \times \mathfrak{g}^*\right\vert_{M} \times \Lambda \mapsto (g_{k+1}, \mu_{k+1}, \lambda_{k+1} = \Lambda_k^s) \in \left. G \times \mathfrak{g}^*\right\vert_{M} \times \Lambda
\end{equation*}
\begin{align*}
\Xi_k^i &= \tau^{-1}\left(g_k^{-1} G_k^i\right) = h \sum_{j = 1}^s a_{i j} \mathrm{H}_k^j = \xi_{k}^i,
\\
\xi_{k,k+1} &= \tau^{-1}\left(g_k^{-1} g_{k+1}\right) = h \sum_{j = 1}^s b_{j} \mathrm{H}_k^j,
\\
\mathrm{M}_k^i &= \mathrm{Ad}_{\tau(\xi_{k,k+1})}^* \left[ \mu_{k} - h \sum_{j = 1}^s b_j \left( \mathrm{d}^L\tau^{-1}_{-\Xi_k^j} - \frac{a_{j i}}{b_i} \mathrm{d}^{L}\tau^{-1}_{-\xi_{k,k+1}}\right)^* \mathrm{N}_k^j\right],
\\
\mu_{k}^i &= \mathrm{Ad}_{\tau(\Xi_{k}^i)}^* \left[ \mu_{k} - h \sum_{j = 1}^s a_{i j} \left(\mathrm{d}^L\tau^{-1}_{-\Xi_k^j}\right)^* \mathrm{N}_k^j\right],\\
\mu_{k+1} &= \mathrm{Ad}_{\tau(\xi_{k,k+1})}^* \left[ \mu_{k} - h \sum_{j = 1}^s b_j \left(\mathrm{d}^L\tau^{-1}_{-\Xi_k^j}\right)^* \mathrm{N}_k^j\right],\\
0 &= \psi\left(g_k \tau(\xi_k^i), \mu_k^i\right);
\end{align*}
where
\begin{align*}
\mathrm{N}_k^i &= \left(\mathrm{d}^L\tau_{\Xi_k^i}\right)^* \left[ L_{g_k \tau(\Xi_k^i)}^* D_1 \mathscr{h}\left(g_k \tau(\Xi_k^i), \left(\mathrm{d}^{L}\tau_{\Xi_k^i}^{-1}\right)^* \mathrm{\Pi}_k^i\right)\right.\\
& - \left.\left\langle \Lambda_k^i, \flat_{\mathscr{h}}\left( D_2 \psi\right)\left(g_k \tau(\Xi_k^i), \left(\mathrm{d}^{L}\tau_{\Xi_k^i}^{-1}\right)^* \mathrm{\Pi}_k^i\right)\right\rangle\right],\\
\mathrm{M}_k^i &= \left(\mathrm{d}^{L}\tau^{-1}_{\xi_{k,k+1}}\right)^*\left[\mathrm{\Pi}_k^i + h \sum_{j = 1}^s \frac{b_j a_{j i}}{b_i} \left( \mathrm{dd}^{L}\tau_{\Xi_k^j} \right)^* \left( \mathrm{H}_k^j, \mathrm{\Pi}_k^j\right)\right],\\
\mathrm{H}_k^i &= \left(\mathrm{d}^{L}\tau_{\Xi_k^i}\right)^* D_2 \mathscr{h}\left(g_k \tau(\Xi_k^i), \left(\mathrm{d}^{L}\tau_{\Xi_k^i}^{-1}\right)^* \mathrm{\Pi}_k^i\right).
\end{align*}

\end{document}